\newcommand{\benbox}[1]{\begin{center}\fbox{\begin{minipage}{0.95\textwidth}#1\end{minipage}}\end{center}}
\title{On efficient algorithms for computing near-best polynomial approximations to high-dimensional, Hilbert-valued functions from limited samples}
\author{Ben Adcock\thanks{Department of Mathematics, Simon Fraser University, Burnaby BC, Canada} \and Simone Brugiapaglia\thanks{Department of Mathematics and Statistics, Concordia University, Canada} \and Nick Dexter\footnotemark[1] \and Sebastian Moraga\footnotemark[1]}
\begin{document}

\maketitle

\begin{abstract}
Sparse polynomial approximation has become an indispensable technique for learning accurate approximations to smooth, high- or infinite-dimensional functions from limited sample values. This is a key task in computational science and engineering -- for example, surrogate model construction in Uncertainty Quantification (UQ), where the underlying function is the solution map of a parametric (or stochastic) Differential Equation (DE). Yet, sparse polynomial approximation lacks a complete theory. On the one hand, there is a well-developed theory of \textit{best $s$-term polynomial approximation}, which asserts exponential or algebraic rates of convergence for holomorphic functions. On the other hand, there are increasingly mature methods such as (weighted) $\ell^1$-minimization for such approximations. While the sample complexity of these methods has been analyzed through compressed sensing theory, the matter of whether they achieve the rates of the best $s$-term approximation is not fully understood. Furthermore, these methods are not algorithms per se, since they involve exact minimizers of nonlinear (albeit convex) optimization problems.

This paper closes these gaps. Specifically, we pose and answer the following question: \textit{are there robust, efficient algorithms for computing sparse polynomial approximations to finite- or infinite-dimensional, holomorphic and Hilbert-valued functions from limited samples that achieve the same rates as the best $s$-term approximation?} We answer this in the affirmative by introducing algorithms and theoretical guarantees that assert exponential or algebraic rates of convergence in terms of the number of samples, along with robustness to \textit{sampling}, \textit{algorithmic} and \textit{physical discretization} errors.  We tackle both scalar- and Hilbert-valued functions, this being particularly relevant in parametric or stochastic DEs. Our results involve several significant developments of existing techniques, including a novel restarted primal-dual iteration for solving weighted $\ell^1$-minimization problems in Hilbert spaces. Our theory is supplemented by numerical experiments demonstrating the practical efficacy of these algorithms.

\end{abstract}

\section{Introduction}

A fundamental task in computational science and engineering involves accurately approximating a smooth target function from limited data. Such a task arises notably in the study of parametric models of physical processes. Here the variables represent the parameters in the system, e.g., material properties, forcing terms, or boundary information, and the parametric model is often represented as a (system of) Differential Equations (DEs) or Partial Differential Equations (PDEs) depending on these parameters. Important objectives involve understanding how the choice of such parameters affect the output(s) of the system and, in the stochastic setting, understanding how uncertainty in the parameter values propagates to its output -- the latter being one of the key tasks in computational Uncertainty Quantification (UQ) \cite{ghanem2017handbook,le2010spectral,smith2013uncertainty,sullivan2015introduction}.

\subsection{High-dimensional function approximation from limited samples}

Abstractly, this task can be recast as that of approximating an unknown target function
\bes{
f : \cU \rightarrow \cV,\ \bm{y} \mapsto f(\bm{y}),
}
from \textit{sample values} (or \textit{snapshots}) 
\be{
\label{f-snapshots}
f(\bm{y}_1),\ldots,f(\bm{y}_m).
}
Here, the input space $\cU$ is typically a subset of $\bbR^d$ (in the finite-dimensional case) or $\bbR^{\bbN}$ (in the infinite-dimensional case). The output space $\cV$ could either be a scalar field, a finite-dimensional vector space or an infinite-dimensional Banach or Hilbert space.

This problem is challenging in a number of ways. First, the dimension $d$ is high, since modern parametric models typically involve many parameters. It may also be infinite, e.g., in the case of a random field represented via its Karhunen--Lo\`eve expansion. Therefore, care must be taken to design methods that scale well with dimension. In addition, the amount of samples $m$ is often highly limited. For example, in the parametric DE setting, each evaluation of $f$ involves an expensive computational simulation. The data \R{f-snapshots} is also always corrupted by errors, due to noise in physical experiments or numerical error in solving a DE. And finally, since the output $f(\bm{y})$ is often the solution of DE parametrized by the vector $\bm{y}$, it may consequently take values in an infinite-dimensional Banach or Hilbert space. While it is commonplace to circumvent this issue in practice by considering scalar-valued \textit{quantities of interest} (i.e., functions of the form $g(\bm{y}) = \cQ(f(\bm{y}))$ for some known map $\cQ : \cV \rightarrow \bbC$), approximating the full function $f$ is both of theoretical interest and practical importance \cite{dexter2019mixed}.

\rem{
As a further consideration, we note that in many scenarios one may have substantial flexibility to choose the sample points $\bm{y}_1,\ldots,\bm{y}_m \in \cU$ in \R{f-snapshots}. However, in other scenarios they may be fixed, e.g., when dealing with legacy data. In this work, we consider Monte Carlo sampling -- which may be considered either as a designed sampling strategy or a fixed one, depending on the setting. Here, the samples are drawn randomly and independently from an underlying probability measure on $\cU$.  This is very common in practice, in particular in UQ settings. 
}

\subsection{Smoothness and best $s$-term polynomial approximation}

A key characteristic of parametric model problems is that the target function $f$ is often smooth. There is now a large body of literature that has established that solution maps of a wide range of different parametric DEs are \textit{holomorphic} (i.e., \textit{analytic}) functions of their parameters. We mention in passing problems such as: elliptic PDEs with affine and nonaffine parametric dependence, parabolic PDEs, PDEs over parametrized domains and shape uncertainty, parametric Initial Value Problems (IVPs), parametric hyperbolic problems and parametric control problems. Classical results in this direction can be found in, e.g., \cite{walter1998ordinary} and references therein. For surveys of more recent results, we refer to \cite{cohen2015approximation} and \cite{adcock2021sparse}, and references therein.

In tandem with the effort to establish holomorphic regularity of parametric DEs, there has also been a focus on applying polynomial methods, and in particular, \textit{best $s$-term polynomial approximation} to construct finite approximations to such functions. In best $s$-term approximation, the function $f$ is approximated by an $s$-term expansion corresponding to its largest $s$ coefficients (measured in the $\cV$-norm) with respect to a polynomial basis. Common choices include Taylor polynomials, tensor-product Legendre and Chebyshev polynomials on bounded hypercubes or tensor-product Hermite and Laguerre polynomials on $\bbR^d$ or $[0,\infty)^d$. Over the last fifteen years, there have been significant developments in the approximation theory of such techniques (see the aforementioned references, plus those in \S \ref{s:related-work}). Signature results have established \textit{exponential} and \textit{algebraic} convergence rates for the best $s$-term approximation. The former assert that the error decays at least exponentially fast in $s^{1/d}$ in finite dimensions for any holomorphic function. The latter assert that the error decays algebraically fast; specifically, like $s^{1/2-1/p}$ for some $0 < p < 1$. These algebraic rates also hold in infinite dimensions, thus establishing best $s$-term approximation as a (theoretical) means to approximate holomorphic functions of infinitely many variables. We review several such results in \S \ref{ss:best-s-term}.

\subsection{Computing sparse polynomial approximations}

Unfortunately, the best $s$-term approximation cannot generally be computed from the samples \R{f-snapshots}. Indeed, constructing it theoretically involves computing and then searching over infinitely-many coefficients. Both tasks are generally impossible. Therefore, there has also been a focus on methods to compute accurate polynomial approximations from sample values. 

One line of work focuses on \textit{least-squares} methods, wherein a polynomial approximation (or sequence of approximations) is computed in a fixed polynomial subspace (or sequence of nested subspaces). See \S \ref{s:related-work} for relevant references. Such methods are essentially optimal if a (sequence of) polynomial subspace that gives a quasi-best $s$-term approximation is known. 

However, this information is generally unavailable in practice (although it may be for certain simple parametric DEs). It essentially equates to knowing the region of holomorphy of the underlying function, which is itself similar to knowing the order of importance of the parametric variables, and their relative strengths. 
To counter this, there are \textit{adaptive} least-squares methods \cite{chkifa2013sparse,chkifa2014high,cohen2018multivariate,gittelson2013adaptive,cohen2015approximation,migliorati2015adaptive,cohen2018multivariate,migliorati2019adaptive}. Here one strives to construct such subspaces adaptively using the given data \R{f-snapshots}, typically via a greedy procedure. However, these currently lack theoretical guarantees \cite{chkifa2014high,cohen2015approximation}. 

To overcome this limitation, there has also been a substantial focus on methods inspired by \textit{compressed sensing} \cite{vidyasagar2019introduction,foucart2013mathematical,adcock2021compressive}. See \S \ref{s:related-work} for references. These methods seek a polynomial approximation in a larger subspace, whose coefficients are defined as a minimizer of an $\ell^1$- or weighted $\ell^1$-minimization problem. A key component of this endeavour has been to determine the \textit{sample complexity} of such schemes, i.e., quantifying how many (Monte Carlo) samples $m$ are sufficient to obtain an approximation with a certain guaranteed error bound, involving a (weighted) best approximation error plus a truncation error. 
Yet, precise rates of approximation (i.e., algebraic or exponential in $m$) have typically not been derived in previous work for these schemes. Another key limitation of previous work is that such methods are not algorithms per se. Indeed, they consider exact minimizers of nonlinear optimization problems, which cannot be computed exactly in finitely-many arithmetic operations.

\subsection{Problem and main contributions}\label{s:main-contribs}

Least-squares and compressed sensing techniques are commonly applied to compute polynomial approximations to parametric and stochastic DEs. However, there is a key gap between theory and practice. The theory of the best $s$-term approximation asserts the existence of polynomial approximations that attain specific algebraic or exponential rates of convergence for arbitrary holomorphic functions. Yet, it is currently unknown whether similar rates in terms of the number of samples $m$ can be obtained via an algorithm that computes a polynomial approximation from the samples \R{f-snapshots} in finite time. The purpose of this work is to close this gap.

We now describe the problem considered in this paper. Let $\cU = [-1,1]^{d}$, where $d \in \bbN$ or $d = \infty$, and $\cV$ be an arbitrary separable Hilbert space. Let $\varrho$ be either the uniform or Chebyshev (arcsine) measure and consider the associated tensor-product Legendre or Chebyshev polynomials. Now let  $f : \cU \rightarrow \cV$ be the unknown target function that we seek to approximate, draw $m$ sample points $\bm{y}_1,\ldots,\bm{y}_m$ i.i.d.\ from $\varrho$ and let
\be{
\label{f-snapshots-noisy}
d_i = f(\bm{y}_i) + n_i,\quad i = 1,\ldots,m,
}
be $m$ noisy samples of $f$. Then, informally stated, the problem we study in this paper is the following: \textit{devise algorithms that take \R{f-snapshots-noisy} as input and compute the coefficients of a polynomial approximation $\hat{f}$ to $f$ with guarantees on both the computational complexity and the error $f - \hat{f}$}. Note that the formal problem statement involves several technicalities (in particular, the definition of an algorithm), so we defer it to \S \ref{ss:prob-stat}. 

Our main contributions are on the existence of such algorithms (see Tables \ref{tab:the-algorithms} and \ref{tab:the-effic-algorithms} and Algorithms \ref{a:primal-dual-wSRLASSO-coeff} and \ref{a:primal-dual-wSRLASSO-coeff-restart}). In all cases, we establish error bounds of the form
\be{
\label{main-err-intro}
\nmu{f - \hat{f} }_{L^2_{\varrho}(\cU ; \cV)} \lesssim E_{\textsf{app}} + E_{\textsf{samp}} + E_{\textsf{disc}} + E_{\textsf{alg}},
}
with probability at least $1-\epsilon$ with respect to the (Monte Carlo) draw of the sample points $\bm{y}_i$. 
Here $\nm{\cdot}_{L^2_{\varrho}(\cU;\cV)}$ is the Lebesgue--Bochner norm. The bound \R{main-err-intro} provides a complete accounting for the main sources of error in the problem:
\bulls{
\item $E_{\textsf{app}}$ is a \textit{polynomial approximation error} term. Depending on the specific setup, it decays algebraically (Theorems \ref{t:main-res-map-alg}--\ref{t:main-res-effic-algo-alg_infty}) or exponentially (Theorems \ref{t:main-res-map-alg_exp}
--\ref{t:main-res-effic-algo-alg_exp}) with respect to $m$ (up to several log terms). For instance, in the infinite-dimensional setting (Theorems \ref{t:main-res-map-alg_infty}--\ref{t:main-res-effic-algo-alg_infty}), this term is given by
\be{
\label{alg-err-intro}
E_{\textsf{app}} = C \cdot \left ( \frac{m}{c_0 L} \right )^{1/2-1/p}, \qquad L = \log(m) \cdot (\log^3(m) + \log(\epsilon^{-1}) ),
}
where $c_0 \geq 1$ is a universal constant, $C$ is a constant depending on (the region of holomorphy of) $f$ only, $p \in (0,1)$ is a parameter determined by the region of holomorphy of $f$ and $0 < \epsilon < 1$ is the failure probability of \eqref{main-err-intro}. It is completely equivalent to the corresponding algebraic decay rate (Theorem \ref{t:best_s_term_alg_inf}) for the best $s$-term approximation error, except with $s$ replaced by $m/(c_0 L)$.
\item $E_{\textsf{samp}}$ is the \textit{sampling error} and is equal to
\bes{
E_{\textsf{samp}} = \sqrt{\frac1m \sum^{m}_{i=1} \nm{n_i}^2_{\cV}},
}
 i.e., the norm of the error in the samples \R{f-snapshots-noisy}. In other words, this means that the algorithms are robust to noise in the samples.
\item $E_{\textsf{disc}}$ is the \textit{physical discretization error}. This term accounts for the fact that an algorithm cannot work with (i.e., take as input, or perform computations in) $\cV$ when it is an infinite-dimensional Hilbert space. The algorithms (see Tables \ref{tab:the-algorithms} and \ref{tab:the-effic-algorithms} and Algorithms \ref{a:primal-dual-wSRLASSO-coeff} and \ref{a:primal-dual-wSRLASSO-coeff-restart}) therefore work in a finite-dimensional discretization space $\cV_h \subseteq \cV$. This is a standard step in parametric DEs, where discretization is often performed via techniques such as the Finite Element Method (FEM). In this case, $\cV_h$ is a finite element space. The term  $E_{\textsf{disc}}$ quantifies the effect of this error. It is given by
\bes{
E_{\textsf{disc}} = \nmu{f - \cP_h(f)}_{L^{\infty}(\cU ; \cV)}, 
}
where $\cP_h : \cV \rightarrow \cV_h$ is the orthogonal projection onto $\cV$. In other words, the effect of working in $\cV_h$ instead of $\cV$ is determined by the error of the (pointwise) best approximation $\cP_h(f)$ to $f$ from $\cV_h$. If $\cV$ has finite dimension we assume $\cV_h =\cV$, which implies that $E_{\textsf{disc}} = 0$. 

\item $E_{\textsf{alg}}$ is the \textit{algorithmic error}. It depends on the number of iterations $t$ performed by the algorithm that computes the coefficients of the polynomial approximation $\hat{f}$. We construct one type of algorithm (see Table \ref{tab:the-algorithms} and and Algorithm \ref{a:primal-dual-wSRLASSO-coeff}) where this term is $\ord{1/t}$ as $t \rightarrow \infty$. This decay is relatively slow, especially in the regime where $E_{\textsf{app}}$ is exponentially small in $m$. However, we also present an \textit{efficient} algorithm (Table \ref{tab:the-effic-algorithms} and Algorithm \ref{a:primal-dual-wSRLASSO-coeff-restart}) for which this term decays exponentially-fast in $t$ (specifically, $\ordu{\E^{-t}}$ as $t \rightarrow \infty$), subject to an additional theoretical constraint. This constraint is seemingly an artefact of the proof. Our numerical experiments suggest it is unnecessary in practice. 
}   
We also determine the computational cost of the algorithms in all cases. Here, we draw two main conclusions.
\bulls{
\item In the infinite-dimensional case (Theorems \ref{t:main-res-algo-alg_infty}--\ref{t:main-res-effic-algo-alg_infty}), the computational cost is \textit{subexponential} in $m$. Specifically, after $t$ iterations of the algorithm, it is
\bes{
\ord{t \cdot m^{1 + (\alpha + 1) \log(4 m) / \log(2)}},\quad m \rightarrow \infty,
}
where $\alpha = 1$ (Chebyshev) or $\alpha = \log(3)/\log(4) \approx 0.79$ (Legendre).
\item In the finite-dimensional, exponential setting (Theorems \ref{t:main-res-algo-alg_exp}--\ref{t:main-res-effic-algo-alg_exp}), the computational cost is \textit{algebraic} in $m$ for fixed $d$. Namely,
\bes{
\ord{t \cdot m^{\alpha + 2} (\log(m))^{(d-1)(\alpha+1)}},\quad m \rightarrow \infty.
}
}
Note that these computational cost estimates also depend polynomially on the dimension of the discretization space $\cV_h$.

\subsection{Discussion and further contributions}\label{ss:discussion}
This work bridges a gap between the best $s$-term polynomial approximation theory and algorithms for computing such approximations from sample values. In particular, it asserts that algebraic and exponential rates with respect to the number of samples $m$ that are highly similar to those of the best approximation.  In other words, polynomial approximations of holomorphic functions can be achieved in a \textit{sample efficient} manner. Furthermore, they can be computed in supexponential or algebraic computational cost.

Our main results assume holomorphy of the underlying function in order to attain these rates. However, they assume no \textit{a priori} knowledge of the region of holomorphy. As discussed, if such information is available, then least-squares methods can be used more straightforwardly to compute an approximation.  The holomorphy assumption is made in order to have concrete algebraic and exponential rates. However, our algorithms exist independently of the smoothness assumption. It would be possible to also provide rates for other classes of functions, e.g., those possessing finite orders of (mixed) smoothness. We use holomorphy as our assumption due to its strong connections with the theory of parametric DEs.

Our algorithms and analysis are based on compressed sensing theory and involve computing approximate minimizers of certain weighted $\ell^1$-minimization problems. Here we make several additional contributions:
\begin{enumerate}[(i)]
\item We provide precise error rates for polynomial approximation via compressed sensing. As noted, most prior work on compressed sensing involves quantifying the sample complexity to obtain a certain (weighted) best approximation error. Subject to a holomorphy assumption, we use this to obtain specific algebraic and exponential rates.
\item Prior works consider polynomial approximations formed by exact minimizers of nonlinear optimization problems. We introduce novel, efficient algorithms to compute approximate minimizers in finite computational time (see also below).
\item While these algorithms are motivated by the desire to have full error bounds, they are also completely practical. We present a series of numerical experiments demonstrating their practical efficacy. In fact, our experiments show that these algorithms work even better than our theoretical results suggest.
\item Most prior works on compressed sensing (with the exception of \cite{dexter2019mixed}) focus on scalar-valued functions, e.g., quantities of interest of parametric DEs. We develop algorithms that work in the Hilbert-valued setting, and, crucially, provide error bounds that take into account discretization error.
\end{enumerate}
More precisely, our algorithms first formulate the approximation problem as the recovery of a finite, Hilbert-valued vector (i.e., an element of $\cV^N$) via a so-called weighted, Square-Root LASSO (SR-LASSO) optimization problem. The use of the SR-LASSO, as opposed to the classical LASSO or various constrained formulations, is crucial to this work. It is \textit{noise-blind}. Hence it allows us to devise algorithms that do not require any a priori (and generally unavailable) estimates on the measurement error $n_i$ in \R{f-snapshots-noisy} or the truncation error with respect to the finite polynomial space in which the approximation is constructed.

To develop algorithms, we use two key ideas. First, we use a powerful, general-purpose first-order optimization method for solving nonsmooth, convex optimization problems. Second, we use the technique of \textit{restarts} to drastically accelerate its convergence. For the former, we employ the \textit{primal-dual iteration} (also known as the Chambolle–Pock algorithm) \cite{ChambolleEtAl2016,ChambollePock2011}. We present error bounds for this method for solving the Hilbert-valued, weighted SR-LASSO, which decay like $\ord{1/t}$, where $t$ is the iteration number. Next, we use a novel restarting procedure, recently introduced in \cite{colbrook2021can,colbrook2021warpd}, to obtain faster, exponential decay of the form $\ordu{\E^{-t}}$.

To the best of our knowledge, this is the first time either the primal-dual iteration or a restarting scheme has been applied to the problem of sparse polynomial approximation. Many existing works use blackbox solvers such as SPGL1 \cite{berg2009probing,spgl1:2007}. See \cite{dexter2019mixed} for a forward-backwards splitting technique in combination with Bregman iterations and fixed-point continuation and \cite{tsilifis2019compressive} for an approach based on Douglas--Rachford splitting. Besides its amenability to theoretical analysis, the primal-dual scheme is also particularly attractive because of its insensitivity to parameter choices and the possibility of performing acceleration via restarts.

As noted, polynomial-based methods have become popular tools for the practical approximation high-dimensional, holomorphic functions arising in problems in computational science and engineering. However, they are by no means the only method. Other popular techniques include Gaussian processes (also known as kriging) \cite{smith2013uncertainty,sullivan2015introduction}, radial basis methods \cite{smith2013uncertainty,jung2010recovery}, reduced-order methods \cite{hesthaven2015certified,quarteroni2015reduced} and, recently, methods based on deep neural networks and deep learning \cite{adcock2022nearoptimal,adcock2021deep,adcock2020between,dung2021deep,dung2021computation,
daws2019analysis,opschoor2022exponential,herrman2022constructive,opschoor2022deep,schwab2021deep,li2020better}. Our goal in this work is to develop algorithms for constructing polynomial approximations that achieve the same rates as the theoretical benchmark provided by the best $s$-term polynomial approximation. An important consideration that we do not address in this work is \textit{tractability} and the \textit{information complexity} \cite{novak2010trac,novak2008trac} of these classes of functions and, in particular, whether polynomial-based methods constitute \textit{optimal algorithms}. This question has been studied in the infinite-dimensional case in recent work \cite{adcock2023optimala}. Here, it is shown that the rate $m^{1/2-1/p}$ is a lower bound for the \textit{(adaptive) $m$-width}, i.e., no combination of $m$ (adaptive) linear samples and a (potentially nonlinear) reconstruction map can achieve an approximation error decaying faster than this rate. Notice that this rate is the same, up to constants and logarithmic factors, as \R{alg-err-intro}. Unfortunately, we cannot claim that our algorithms are near optimal  for this problem -- and, moreover, that \textit{standard information}, i.e., pointwise samples, constitutes near-optimal information --  because our theoretical results in the infinite-dimensional case are \textit{nonuniform}. See Remark \ref{rem:uniform-vs-nonuniform} for further discussion on this point, and \S \ref{s:conclusions} for further comments on tractability.

\subsection{Related work}\label{s:related-work}

The systematic study of best $s$-term polynomial approximation of high- or infinite-dimensional holomorphic functions began around 2010 with the works of \cite{bieri2009sparse,cohen2010convergence,todor2007convergence,cohen2011analytic,hansen2013analytic}. For reviews, see \cite{cohen2015approximation} and \cite[Chpt.\ 3]{adcock2021sparse}. 
Note that many of these works assume the function is a solution of a parametric PDE, and therefore first demonstrate that such a function is holomorphic. However, other works avoid this step and use specific properties of the DE to obtain refined estimates. See, e.g., \cite{bachmayr2017sparseI,bachmayr2017sparseII} for results of this type. Other recent works such as \cite{bonito2021polynomial} also study the problem without assuming the function is a solution of a parametric PDE.

The study of least-squares method for constructing such approximations from sample points began in the early 2010s \cite{chkifa2015discrete,cohen2013stability,migliorati2014analysis,migliorati2013polynomial}. There has since been significant research on this topic. Many works have pursued various extensions, such as enhanced sampling strategies \cite{migliorati2015analysis,narayan2017christoffel,guo2018weighted,seshadri2017effectively,tang2014discrete,zhou2015weighted,zhou2014multivariate}, near-optimal sampling strategies \cite{adcock2020nearoptimal,hampton2015coherence,cohen2017optimal}, optimal sampling strategies \cite{cohen2021optimal,kammerer2019worst,limonova2021sampling,temlyakov2021optimal,bartel2022constructive,dolbeault2023sharp}, methods for general domains \cite{migliorati2021multivariate,adcock2020approximating,dolbeault2020optimal}, optimal and adaptive methods \cite{migliorati2015adaptive,cohen2017discrete,migliorati2019adaptive} and multilevel strategies \cite{hajiali2020multilevel}. See \cite{hadigol2018least,guo2020constructing,cohen2018multivariate} and \cite[Chpt.\ 5]{adcock2021sparse} for reviews.

Compressed sensing was introduced in the context of image and signal processing by modelling image and signals as sparse vectors \cite{adcock2021compressive,donoho2006compressed,foucart2013mathematical,candes2006robust}. Its use in polynomial approximation started early in the last decade with the works of \cite{blatman2011adaptive,doostan2011nonadapted,mathelin2012compressed,rauhut2012sparse,yan2012stochastic}. This has also led to substantial research. See \cite{dexter2018sparse,
dexter2019mixed,
doostan2011nonadapted,
mathelin2012compressed,
rauhut2017compressive,yang2013reweighted} and references therein for applications to parametric PDEs. Various extensions include refined sampling strategies
\cite{alemazkoor2018near-optimal,diaz2018sparse,guo2017stochastic,hampton2015compressive,jakeman2017generalized,liu2016stochastic,tang2014subsampled}, iterative methods and basis selection techniques
 \cite{alemazkoor2017divide,
 hampton2018basis,
 tsilifis2019compressive,
  yang2016enhancing,
 yang2018sliced,
 yang2019general,yang2016enhancing},  
nonconvex optimization methods
 \cite{guo2017sparse,
 tran2019class,
 xu2020analysis,
 yan2017sparse},
 sublinear-time algorithms
 \cite{choi2021sparse,
 choi2021sparse2},
gradient-enhaced minimization techniques
 \cite{adcock2019compressive,
 guo2017gradient,
 jakeman2015enhancing,
 peng2016polynomial,
 sui2020weighted,
 tang2013methods},
methods for dealing with corrupted samples
 \cite{adcock2019correcting,
 adcock2018compressed,
 ho2020recovery,
 shin2016correcting} and
multilevel and multifidelity strategies
 \cite{bouchot2017multilevel,
 ng2012multifidelity}.
For additional information and reviews, see
 \cite{luthen2021sparsesolvers,
 narayan2015stochastic,
 hampton2017compressive,
 kougioumtzoglou2020sparse,
 luthen2021sparseliterature} and \cite[Chpt.\ 7]{adcock2021sparse}.

Our work combines and extends several key elements of this literature. First, weighted $\ell^1$-minimization, which was developed in \cite{adcock2020sparse,adcock2017infinite,adcock2018infinite,adcock2019correcting,chkifa2018polynomial,peng2014weighted,rauhut2016interpolation,yang2013reweighted} and \cite[Chpts.\ 6-7]{adcock2021sparse}. Second, the notions of \textit{lower} and \textit{anchored} sets (see \S \ref{s:lower-anchored}). These have been extensively studied in the best $s$-term polynomial approximation literature. Compressed sensing techniques aiming to exploit such structures were first considered in \cite{adcock2018infinite,adcock2019correcting,chkifa2018polynomial} and \cite[Chpt.\ 7]{adcock2021sparse}. Third, the extension of classical compressed sensing theory from vectors in $\bbR^N$ (or $\bbC^N$) to Hilbert-valued vectors in $\cV^N$. This was first developed in \cite{dexter2019mixed}. In order to prove our main results, we also extend this framework to the weighted setting.

See \cite{ChamPockAN,ChambolleEtAl2016,ChambollePock2011} for more on the primal-dual iteration and \cite{renegar2022simple,roulet2020sharpness,roulet2020computational} for the general notion of restarts in continuous optimization. Note that there are also various non-optimization based techniques in the compressed sensing literature (see, e.g., \cite{foucart2013mathematical}), including iterative  threshold and greedy methods (the latter are closely related to the adaptive least-squares methods discussed earlier \cite[\S 6.2.5]{adcock2021sparse}). However, these do not currently possess theoretical guarantees in the weighted setting.

There have been several previous attempts to connect compressed sensing theory for analyzing the sample complexity of polynomial approximations via (weighted) $\ell^1$-minimization and best $s$-term polynomial approximation theory. In \cite{rauhut2017compressive}, the authors consider approximating scalar quantities of interest of solutions to affine parametric operator equations in Banach spaces. Assuming a certain weighted summability criterion, they first show holomorphy of the parametric solution map and then use a weighted $\ell^1$-minimization procedure in combination with Chebyshev polynomials to derive algebraic rates of convergence, similar to \R{alg-err-intro}. Our work is more general, since its starting point is a holomorphic function, not a solution of a parametric operator equation. We also consider Hilbert-valued functions, i.e., the whole solution map, not a scalar quantity of interest of it. Moreover, the work of \cite{rauhut2017compressive} is based on exact minimizers of certain constrained, weighted $\ell^1$-minimization problems, whereas we construct full algorithms. Recently, at the same time as writing this paper, some similar results were presented in the book \cite{adcock2021sparse} written by two of the authors. However, these only consider the scalar-valued case and do not address algorithms, which is the main focus of this work.

\subsection{Outline}

The remainder of this paper proceeds as follows. We commence in \S \ref{s:prelims} with various preliminaries, including key notation and best $s$-term polynomial approximation theory. Next, in \S \ref{s:prob-main-res} we first formally define the problem and then state our main results on the existence of algorithms. In \S \ref{s:constr-algs} we derive these algorithms. Then in \S \ref{s:num-exp} we present numerical experiments demonstrating their practical performance. \S \ref{s:proof-overview}--\ref{s:final-args} are devoted to the proofs of the main results. See \S \ref{s:proof-overview} for a detailed overview of these sections. Finally, in \S \ref{s:conclusions} we present our conclusions.

\section{Preliminaries}\label{s:prelims}

In this section, we introduce key preliminary material needed later in the paper. After some initial notation, we define the domains (the symmetric hypercubes), probability measures (the uniform and Chebyshev measures, respectively) and the Lebesgue--Bochner spaces. We next formalize our main smoothness assumption: namely, holomorphy in suitable (unions of) Bernstein polyellipses. We then introduce orthogonal polynomial expansions and best $s$-term polynomials approximations, before discussing sequence spaces and best $s$-term approximations of sequences. Finally, we conclude by reviewing algebraic and exponential rates of convergence for best $s$-term polynomial approximations, before a short discussion on lower and anchored sets.

\subsection{Notation}
We first introduce some notation. For $d \in \bbN$, we write $[d] = \{1,\ldots,d\}$. We also extend this to allow for $d = \infty$, in which case $[d] = \bbN$ is the set of positive integers. For $d \in \bbN \cup \{ \infty \}$, we write $\bm{e}_j$, $j \in [d]$, for the standard basis vectors, i.e.\ $\bm{e}_j = (\delta_{jk})_{k \in [d]}$. Also for $d \in \bbN \cup \{ \infty \}$, we write $\bbR^d$ or $\bbC^d$ for the vector space of real or complex vectors of length $d$. Note that when $d = \infty$, $\bbR^d$ and $\bbC^d$ are the vector spaces $\bbR^{\bbN}$ and $\bbC^{\bbN}$ of real- or complex-valued sequences indexed over $\bbN$.

For $1 \leq p \leq \infty$, we write $\nm{\cdot}_{p}$ for the usual vector $\ell^p$-norm and for the induced matrix $\ell^p$-norm. When $0 < p <1$, we use the same notation to denote the $\ell^p$-quasinorm. For $1 \leq p,q < \infty$ we define the matrix $\ell^{p,q}$-norm  of an $m \times n$ matrix $\bm{G} = (G_{ij})^{m,n}_{i,j=1}$ as  $\| \bm{G}\|^q_{p,q} := \sum_{j=1}^n \left ( \sum^{m}_{i=1} | G_{ij} |^p \right )^{q/p}$, and similarly when $p = \infty$ or $q = \infty$.

Throughout this paper, we consider sets of multi-indices. Let $d \in \bbN$. Then we define the multi-index set $\cF$ as the set of nonnegative multi-indices, i.e.
\be{
\label{Fdef-1}
\cF : = \mathbb{N}_0^d = \lbrace \bm{\nu} =(\nu_ k)_{k=1}^d: \nu_k \in \mathbb{N}_0 \rbrace,\qquad d < \infty.
}
When $d = \infty$, we consider multi-indices in $\bbN^{\bbN}_{0}$ with at most finitely-many nonzero terms, i.e.,\ we define
\begin{equation}
\label{Fdef-2}
\cF := \lbrace  \bnu = (\nu_k)^{\infty}_{k=1} \in \bbN_0^{\bbN} : | \{ k : \nu_k \neq 0 \} | < \infty\rbrace,\qquad d = \infty.
\end{equation}
In either finite or infinite dimensions, we write $\bm{0}$ and $\bm{1}$ for the multi-indices consisting of all zeros and all ones, respectively. Finally, the inequality $\bm{\mu} \leq \bm{\nu}$ is understood componentwise for any multi-indices $\bm{\mu}$ and $\bm{\nu}$.

\subsection{Domains and function spaces}

Let $\varrho = \varrho^{(1)}$ be a probability measure on $[-1,1]$.  In this paper, we focus on two main examples, the uniform and Chebyshev (arcsine) measures. These are defined by
\begin{equation}\label{meas-unif-1D}
\D \varrho(y) =  2^{-1}\D y,
\quad \mathrm{and} \quad \D \varrho(y) =  \frac{1}{\pi \sqrt{1-y^2}} \D y,\quad y \in \cU,
\end{equation}
respectively. See \S \ref{s:conclusions} for a short discussion on other domains and measures.
In finite dimensions, we let $\cU = [-1,1]^d$ be the symmetric $d$-dimensional hypercube and write $\bm{y} = (y_1,\ldots,y_d) \in \cU$ for the variable in this domain. We define a probability measure on $\cU$ as the product measure
\bes{
\varrho = \varrho^{(d)} : = \varrho^{(1)} \otimes \cdots \otimes \varrho^{(1)}.
}
In particular, the $d$-dimensional uniform and Chebyshev measures are given by
\begin{equation}\label{meas-unif}
\D \varrho(\bm{y}) =  2^{-d}\D \bm{y},
\quad \mathrm{and} \quad \D \varrho(\bm{y}) =  \prod_{k=1}^d \dfrac{1}{\pi \sqrt{1-y_k^2}} \D \bm{y},\quad \forall\bm{y} \in \cU,
\end{equation}
respectively.
In infinite dimensions, we consider the domain $\cU = [-1,1]^{\bbN}$ and write $\bm{y} = (y_1,y_2,\ldots) \in \cU$ for the variable in this domain. The Kolmogorov extension theorem   (see, e.g., \cite[\S 2.4]{tao2011introduction}) guarantees the existence of a tensor-product probability measure on $\cU$, which we denote as
\bes{
\varrho = \varrho^{(\infty)} =  \prod_{k\in \bbN} \varrho^{(1)}.
}
In either finite or infinite dimensions, for $1 \leq p \leq \infty$ we write $L^{p}_{\varrho}(\cU)$ for the corresponding weighted Lebesgue spaces of complex scalar-valued functions over $\cU$ and $\nm{\cdot}_{L^p_{\varrho} (\cU)}$ for their norms.

Throughout, we let $\cV$ be a separable Hilbert space over $\bbC$ (it presents few difficulties to consider a complex field instead of the real field). We write $\ip{\cdot}{\cdot}_{\cV}$ and $\nm{\cdot}_{\cV}$ for its inner product and norm.
We define the weighted (Lebesgue-)Bochner space $L^p_{\varrho}(\cU;\cV)$
as the space consisting of (equivalence classes of) strongly $\varrho$-measurable functions $f: \cU \rightarrow \cV$ for which $\nm{f}_{L^{p}_{\varrho}(\cU ; \cV)} < \infty$, where 
\be{
\nm{f}_{L^p_{\varrho}(\cU;\cV)} : = 
\begin{cases} 
\left( \int_{\cU} \nm{f( \y)}_{\cV}^p \D \varrho (\y) \right)^{1/p} & 1 \leq p < \infty ,
\\
\mathrm{ess} \sup_{\y \in \cU} \nm{f(\y)}_{\cV}  & p = \infty .
\end{cases}
\label{L_p_U_V}
}
Note that $L^{p}_{\varrho}(\cU)$ is a special case of $L^{p}_{\varrho}(\cU ; \cV)$ corresponding to $\cV = (\bbC,\abs{\cdot})$.

When $\cV$ is infinite dimensional, we usually cannot work directly with it. Hence, we consider a finite-dimensional discretization 
\begin{equation}
\label{eq:conforming}
\cV_h \subseteq \cV.
\end{equation}
Here $h > 0$ denotes a discretization parameter, e.g., the mesh size in the case of a finite element discretization (as is common in parametric DEs). In the context of finite elements, assuming \eqref{eq:conforming} corresponds to considering so-called \textit{conforming} discretizations. We let $\{ \varphi_k \}^{K}_{k=1}$ be a (not necessarily orthonormal) basis of $\cV_h$, where
$
K = K(h) = \dim(\cV_h).
$
We write $\cP_h: \cV \rightarrow \cV_h$
for the orthogonal projection onto $\cV_h$ and, for $f \in L^2_{\varrho}(\cU ; \cV)$, we let $\cP_h f \in L^2_{\varrho}(\cU ; \cV_h)$ be the function defined almost everywhere as
\be{
\label{Phf-def}
(\cP_h f)(\bm{y}) = \cP_h (f(\bm{y})), \quad \y \in \cU.
}

\subsection{Holomorphy}
Here we recall the definition of holomorphy and holomorphic extension for Hilbert-valued functions. We note that equivalent definitions are possible (see, e.g., \cite[Chapter 2]{herve1989analyticity}) and that the definition employed in this work is based on the notion of the Gateaux partial derivative. For other details on differentiability of Hilbert-valued functions we refer to \cite[Chapter 17]{BauschkeCombettes2017},   and the references  therein. Note the following definitions apply in both the finite- ($d \in \bbN$) and infinite- ($d = \infty$) dimensional settings, where we recall that $[d] = \bbN$ and $\bbC^d = \bbC^{\bbN}$ when $d = \infty$.

\begin{definition}
[Holomorphy; finite- or infinite-dimensional case]
Let $d \in \bbN \cup \{ \infty \}$, $\mathcal{O} \subseteq \mathbb{C}^{d}$ be an open set and  $\cV$ be a separable Hilbert space. A  function $f: \mathcal{O} \rightarrow \cV$ is holomorphic in $\mathcal{O}$ if and only if it is holomorphic with respect to each variable  in  $\mathcal{O}$. That is to say, for any $z \in \mathcal{O}$ and any $j \in [d]$, the following limit exists in $\cV$:
\[
\lim_{\substack{h \in \bbC \\ h \rightarrow 0}}  \dfrac{f(z+h \bm{e}_j)-f(z)}{h} \in \cV.
\]
\end{definition}

Let $f : \cU \rightarrow \cV$ and $\cU \subset \cO \subseteq \bbC^d$ be an open set. If there is a function $\tilde{f} : \cO \rightarrow \cV$ that is holomorphic in $\cO$ and for which $\tilde{f} |_{\cU} = f$, then we say that $f$ has a \textit{holomorphic extension to $\cO$}, or simply, that $f$ is \textit{holomorphic in $\cO$}.  In this case, we also define $\nm{f}_{L^{\infty}(\cO; \cV)}:=\nmu{{\widetilde{f}}}_{L^{\infty}(\cO; \cV)}$ or, when $\cV = \bbC$, simply $\nm{f}_{L^{\infty}(\cO)}$. If $\cO$ is a closed set, then we say that $f$ is holomorphic in $\cO$ if it has a holomorphic extension to some open neighbourhood of $\cO$.

We are interested in approximating Hilbert-valued functions $f : \cU \rightarrow \cV$ that are holomorphic in suitable complex regions containing $\cU$ -- specifically, regions defined by \textit{Bernstein (poly)ellipses}.  When $d = 1$ the Bernstein ellipse of parameter $\rho > 1$ is defined by
\bes{\cE_\rho = \left\lbrace \tfrac{1}{2} (z +z^{-1}): z\in \bbC, 1 \leq |z|\leq \rho\right\rbrace \subset \bbC. 
}
This is an ellipse with $\pm 1$ as its foci and major and minor semi-axis lengths given by $\frac12 (\rho\pm\rho^{-1})$. For $d \in \bbN \cup \{\infty\}$, given $\brho = (\rho_j)^{d}_{j=1} \in \bbR^d$ with $\brho> \bm{1}$, we define the Bernstein polyellipse as the Cartesian product
\[
\cE(\brho) =\cE({\rho_1}) \times \cE({\rho_2}) \times \cdots   \subset \bbC^d.
\]
We denote the class of 
Hilbert-valued functions that are holomorphic in $\cE(\bm{\rho})$ with norm at most one as
\be{
\label{B-def}
\cB(\bm{\rho}) = \left \{ f : \cU \rightarrow \cV, \mbox{$f$ holomorphic in $\cE(\bm{\rho})$, $\nm{f}_{L^{\infty}(\cE({\bm{\rho})} ; \cV)} \leq 1$}\right \}.
}
In infinite dimensions, we also consider a class of functions that are holomorphic in a certain union of Bernstein polyellipses. Let $0 < p <1$, $\varepsilon > 0$ and $\bm{b} = (b_{j})_{j \in \bbN} \in \ell^p(\bbN)$. We define
\bes{
\cR({\b,\varepsilon}) =  \bigcup \left\lbrace \cE(\brho): \brho \geq \bm{1},\  \sum_{j=1}^{\infty} \left( \dfrac{\rho_j+\rho_j^{-1}}{2} -1 \right) b_j \leq  \varepsilon \right\rbrace.
}
In analogy with $\cB(\bm{\rho})$, we write
\be{
\label{B-b-eps-def}
\cB(\bm{b},\varepsilon) = \left \{ f : \cU \rightarrow \cV, \mbox{$f$ holomorphic in $\cR({\b,\varepsilon})$, $\nm{f}_{L^{\infty}(\cR({\b,\varepsilon}) ; \cV)} \leq 1$}\right \}
}
for the corresponding space of functions that are  holomorphic in $\cR(\bm{b},\varepsilon)$ with norm at most one.

\subsection{Orthogonal polynomials, polynomial expansions and best $s$-term polynomial approximation}

Under mild assumptions on $\varrho^{(1)}$ (see, e.g., \cite[\S 2.1]{narayan2018computation} or \cite[\S 2.2]{szego1975orthogonal}), there exists a unique orthonormal polynomial basis $\{ \Psi_{\nu} \}_{\nu \in \bbN_0}$ of $L^2_{\varrho}([-1,1])$, where $\Psi_{\nu} = \Psi^{(1)}_{\nu}$ is a polynomial of degree $\nu$. For the measures \R{meas-unif-1D}, these are the Legendre and Chebyshev polynomials, respectively.
 Given the corresponding tensor-product measure $\varrho$ on $\cU = [-1,1]^d$, we construct an orthonormal basis
\bes{
\{ \Psi_{\bm{\nu}} \}_{\bm{\nu} \in \cF} \subset L^2_{\varrho}(\cU)
}
of $L^2_{\varrho}(\cU)$ via tensorization
\bes{
\Psi_{\bm{\nu}}(\bm{y}) = \prod_{k \in [d]} \Psi_{\nu_k}(y_k),\quad \bm{y} \in \cU,\ \bm{\nu} \in \cF.
}
Note that $\Psi^{(1)}_{0} = 1$ since $\varrho^{(1)}$ is a probability measure. Therefore, since $\bm{\nu} \in \cF$ has only finitely-many nonzero entries, in infinite dimensions this equivalent to 
\bes{
\Psi_{\bm{\nu}}(\bm{y}) = \prod_{k : \nu_k \neq 0} \Psi_{\nu_k}(y_k),
}
which is a product of finitely-many terms.

Let $f \in L^2_{\varrho}(\cU ; \cV)$. Then it has the convergent expansion (in $L^2_{\varrho}(\cU ; \cV)$) given by
\be{
\label{f_exp}
f = \sum_{\bm{\nu} \in \cF} c_{\bm{\nu}} \Psi_{\bm{\nu}},\qquad c_{\bm{\nu}} :=\int_{\cU} f(\bm{y}) \Psi_{\bm{\nu}}(\bm{y})    \D \varrho (\bm{y}) \in \cV,
}
where the \textit{coefficients} $c_{\bm{\nu}}$ are elements of $\cV$. 
Now let $S \subset \cF$ be a finite index set and  
\be{
\label{span_P}
\cP_{S;\cV} = \left \{ \sum_{\bm{\nu} \in S} c_{\bm{\nu}} \Psi_{\bm{\nu}} : c_{\bm{\nu}} \in \cV \right \} \subset L^2_{\varrho}(\cU ; \cV).
}
Given this, the $L^2(\cU ; \cV)$-norm \textit{best $s$-term polynomial approximation} $f_s$ of $f$ is defined as
\be{
\label{f_best_s_term}
f_{s} \in \argmin{} \left \{ \nm{f - g}_{L^2_{\varrho}(\cU ; \cV)} : g \in \cP_{S,\cV},\ S \subset \cF,\ |S| = s \right \}.
}
Note that $f_s$ is has the explicit expression
\be{
\label{f_best_s_term_equiv}
f_{s} = \sum_{\bm{\nu} \in S^*} c_{\bm{\nu}} \Psi_{\bm{\nu}},
}
where $S^* \subset \cF$, $|S^*| = s$, is a set of consisting of the multi-indices of the largest $s$ values of the coefficient norms $(\nm{c_{\bm{\nu}}}_{\cV})_{\bm{\nu} \in \bbN^d_0}$. By Parseval's identity, the error satisfies
\be{
\label{err-equiv}
\nmu{f - f_{s}}_{L^2_{\varrho}(\cU ; \cV)} = \sqrt{\sum_{\bm{\nu} \notin S^*} \nm{c_{\bm{\nu}}}^2_{\cV} }.
}

\subsection{Sequence spaces and best $s$-term approximation of sequences}\label{ss:seq-space}

The equivalence \R{err-equiv} motivates studying $s$-term approximation of the sequences of polynomial coefficients. To do this, we now introduce necessary further notation.

Let $\Lambda \subseteq \cF$ denote a (possibly infinite) multi-index set. We write $\bm{v} = (v_{\bm{\nu}})_{\bm{\nu} \in \Lambda}$ for a sequence with $\cV$-valued entries, $v_{\bm{\nu}} \in \cV$. For $1 \leq p \leq \infty$, we define the space $\ell^p(\Lambda;\cV)$ as the set of those sequences $\bm{v} = (v_{\bm{\nu}})_{\bm{\nu} \in \Lambda}$ for which $\|{\bm{v}}\|_{p;\cV} < \infty$, where 
\bes{
\|{\bm{v}}\|_{p;\cV} : = \left \{ \begin{array}{lc} \left ( \sum_{\bm{\nu} \in \Lambda} \|{v_{\bnu}}\|^p_{\cV} \right )^{1/p} & 1 \leq p < \infty  ,
\\ \sup_{\bm{\nu} \in \Lambda} \nm{v_{\bm{\nu}}}_{\cV} & p = \infty .\end{array} \right  .
}
Note that $\ell^2(\Lambda;\cV)$ is a Hilbert space with inner product
\[
\ip{\bm{u}}{\bm{v}}_{2;\cV} = \sum_{\bnu \in \Lambda}  \ip{u_{\bnu}}{v_{\bnu}}_{\cV} .
\]
On occasion, we will consider complex, scalar-valued sequences. In this case, $\cV = (\bbC,\abs{\cdot})$ in the various definitions above. For ease of notation, we simply write $\ell^p(\Lambda)$, $\nm{\cdot}_{p}$, $\ip{\cdot}{\cdot}_2$ and so forth in this case.

\defn{[Sparsity]
\label{d:sparsity}\index{sparsity!standard}
Let $\Lambda \subseteq \cF$ and $\bm{c} = (c_{\bm{\nu}})_{\bm{\nu} \in \Lambda}$ be a $\cV$-valued sequence. The \textit{support} of $\bm{c}$ is the set
\be{
\label{sequence_support}
\supp(\bm{c}) = \{ \bm{\nu} \in \Lambda : \nm{c_{\bm{\nu}}}_{\cV} \neq 0\}.
}
A sequence is \textit{$s$-sparse} for some $s\in\mathbb{N}_0$ satisfying $s \leq |\Lambda|$ if it has at most $s$ nonzero entries, i.e.,
\bes{
| \supp(\bm{c}) | \leq s.
}
}

\defn{[best $s$-term approximation error]
\label{def:best_s_term}
Let $\Lambda \subseteq \cF$, $0 < p \leq \infty$, $\bm{c} \in \ell^p(\Lambda;\cV)$ and $s\in \mathbb{N}_0$ with $s \leq |\Lambda|$. The \textit{$\ell^p$-norm best $s$-term approximation error} of $\bm{c}$ is
\be{
\label{best_s_term}
\sigma_{s}(\bm{c})_{p;\cV} = \min \left \{ \nm{\bm{c} - \bm{z}}_{p;\cV} : \bm{z} \in \ell^p(\Lambda;\cV),\ | \supp(\bm{z}) | \leq s \right \}.
}
}

Let $\bm{c} = (c_{\bm{\nu}})_{\bm{\nu} \in \cF}$ be the coefficients of some function $f \in L^2_{\varrho}(\cU ; \cV)$, as defined in \R{f_exp}. Then, when $p = 2$, we have the following:
\bes{
\sigma_{s}(\bm{c})_{2;\cV} = \nmu{f - f_{s}}_{L^2_{\varrho}(\cU ; \cV)},
}
where $f_s$ is its best $s$-term polynomial approximation \R{f_best_s_term}. Therefore, we can study the error of $f_s$ by studying the quantity $\sigma_{s}(\bm{c})_{2;\cV}$. For notational purposes, we denote this quantity in terms of the coefficients $\bm{c}$.  However, on some occasions, this term is referred to as $\sigma_{s}(f)_{2;\cV}$.

\subsection{Rates of best $s$-term polynomial approximation}\label{ss:best-s-term}

As noted, best $s$-term polynomial approximation of holomorphic functions is a well-studied subject, especially in the context of solutions of parametric DEs. See, e.g., \cite{bonito2021polynomial,bieri2009sparse,cohen2010convergence, cohen2011analytic,hansen2013analytic,chkifa2015breaking,todor2007convergence,opschoor2022exponential,
tran2017analysis,beck2014convergence,beck2012optimal} and, in particular, \cite{cohen2015approximation} and \cite[Chpt.\ 3]{adcock2021sparse}. In this section, we recap two standard types of error decay rates for this approximation, those of \textit{algebraic} and \textit{exponential} type, respectively. Note that these results are for Chebyshev and Legendre polynomial approximations -- the main focus of the work. The latter type of decay rate holds in finite dimensions, while the former holds in both finite and infinite dimensions. In this work, these error decay rates serve as the optimal benchmark against which to compare the approximations computed from sample values.

The following two results are standard, and have appeared in various different guises in the aforementioned works. 

\thm{[Algebraic rates of convergence; finite-dimensional case]
\label{t:best_s_term_alg}
Let $0 < p   \leq 1$ and $f \in \cB(\bm{\rho})$ for some $\bm{\rho} > \bm{1}$. Let $\bm{c} = (c_{\bm{\nu}})_{\bm{\nu} \in \bbN^d_0}$ be as in \R{f_exp}. Then,  for every  $s \geq 1$ there are sets $S_1,S_2 \subset \cF$, $|S_1|, |S_2| \leq s$, such that
\be{
\label{thm:alg_rates_thresh:eq:bound_finite}
\nm{f - f_{S_1}}_{L^2_{\varrho}(\cU ; \cV)}
\leq  C \cdot  s^{1/2-1/p},
\qquad
\nm{f - f_{S_2}}_{L^{\infty}(\cU ; \cV)}
  \leq  C   \cdot  s^{1-1/p},
}
where $f_{S_i} = \sum_{\bm{\nu} \in S_i} c_{\bm{\nu}} \Psi_{\bm{\nu}}$ for $i = 1,2$ and $C = C(d,p,\bm{\rho}) > 0$ depends on $d$, $p$ and $\bm{\rho}$ only.
}

\thm{[Algebraic rates of convergence; infinite-dimensional case]
\label{t:best_s_term_alg_inf}
Let $0 < p <1$, $\varepsilon > 0$, $\bm{b} = (b_{j})_{j \in \bbN} \in \ell^p(\bbN)$ and $f \in \cB(\b,\varepsilon)$, where $\cB(\b,\varepsilon)$ is as in \R{B-b-eps-def}. Then,  for every  $s \geq 1$ there are sets $S_1,S_2 \subset \cF$, $|S_1|, |S_2| \leq s$, such that
\be{
\label{thm:alg_rates_thresh:eq:bound}
\nm{f - f_{S_1}}_{L^2_{\varrho}(\cU ; \cV)}
\leq  C \cdot  s^{1/2-1/p},
\qquad
\nm{f - f_{S_2}}_{L^{\infty}(\cU ; \cV)}
  \leq  C   \cdot  s^{1-1/p},
}
where $f_{S_i} = \sum_{\bm{\nu} \in S_i} c_{\bm{\nu}} \Psi_{\bm{\nu}}$ for $i = 1,2$ and $C = C(\b,\varepsilon,p)  > 0$ depends on $\b$, $\varepsilon$ and $p$ only.
}

 Observe that the curse of dimensionality is not avoided in the constant $C(d,p,\bm{\rho})$ in \eqref{thm:alg_rates_thresh:eq:bound_finite}, but it is avoided in the rate Conversely, \R{thm:alg_rates_thresh:eq:bound} holds in infinite dimensions.
 
We next state a result on exponential convergence in finite dimensions. Such rates have been established in various different works (see, e.g.,\cite{beck2014convergence,beck2012optimal,opschoor2022exponential,cohen2015approximation,tran2017analysis}). The following result is a minor modification of \cite[Thm.\ 3.25]{adcock2021sparse}, in which we allow arbitrary $s \geq 1$ at the expense of a constant $C$ in the error bound.

\thm{[Exponential rates of convergence; finite-dimensional case]
\label{t:best_s_term_exp_1}
Let $f \in \cB(\bm{\rho})$ for some $\bm{\rho} > \bm{1}$ and $\bm{c} = (c_{\bm{\nu}})_{\bm{\nu} \in  \bbN^d_0}$ be as in \R{f_exp}. Then, for every $s \geq 1$ there is a set $S \subset \cF$, $|S| \leq s$, such that
\be{
\label{thm:exp_rates_thresh:eq:bound}
\nm{f - f_S}_{L^2_{\varrho}(\cU ; \cV)}  \leq \nm{f - f_S}_{L^{\infty}(\cU ; \cV)}
\leq C  \cdot \exp(-\gamma s^{1/d} ),
}
for all
\be{
\label{exp_rates-gamma}
0 < \gamma < (d+1)^{-1} \left ( d! \prod_{j = 1}^d \ln(\rho_j) \right )^{1/d},
}
where $f_{S} = \sum_{\bm{\nu} \in S} c_{\bm{\nu}} \Psi_{\bm{\nu}}$ and $C = C(d,\gamma,p,\bm{\rho}) > 0$ is a constant depending on $d$, $\gamma$, $p$  and $\bm{\rho}$ only.
}

In Appendix \ref{s:best-poly-rates} we show how these three theorems can be obtained as immediate consequences of several more general results.

\rem{
It is possible to improve the rate \R{thm:exp_rates_thresh:eq:bound} by removing the $(d+1)^{-1}$ factor in \R{exp_rates-gamma} \cite{tran2017analysis}. The difficulty in doing this is that such rates are not necessarily attained in lower sets (this is, however, true if $\bm{\rho}$ is sufficiently large -- see \cite[Lem.\ 7.20]{adcock2021sparse}). As we discuss next, lower sets are a crucial ingredient in our analysis. Conversely, the rates described in Theorem \ref{t:best_s_term_exp_1} can always be attained in lower sets.
}

\subsection{Lower and anchored sets}\label{s:lower-anchored}

Our objective in this work is to construct a polynomial approximation that satisfies similar error bounds to those of the best $s$-term approximation $f_s$, for any holomorphic function $f$. Hence, ideally, we would have access to the multi-index set $S$ corresponding to the largest $s$ coefficients of $f$ (measured in the $\cV$-norm). As discussed, this is not possible in general, since the only information we have about $f$ is its values at a finite number of sample points. Another problem is that such coefficients could occur at arbitrarily-large multi-indices, thus necessitating a search over infinitely-many multi-indices. Fortunately, it is well known that near-best $s$-term polynomial approximations can be constructed using sets of multi-indices with additional structure. These are \textit{lower} sets (used in the finite-dimensional case) and \textit{anchored} sets (used in the infinite-dimensional case). Classical references for lower and anchored sets include \cite{kuntzman1959methodes,temlyakov1980approximation,lorentz1986solvability,deboor1992computational}. More recently, these structures have been used extensively in the construction of interpolation, least-squares and compressed sensing schemes for polynomial approximation with desirable sample complexity bounds (see, e.g., \cite{adcock2021sparse} and references therein).

\defn{
\label{def:lower-anchored-set}
A set $\Lambda \subseteq \cF$ is \textit{lower} if the following holds for every $\bm{\nu},\bm{\mu} \in \cF$:
\bes{
\label{lowet_set_cond}
(\bm{\nu} \in \Lambda \text{ and } \bm{\mu} \leq \bm{\nu} )\Longrightarrow \bm{\mu} \in \Lambda.
}
A set $\Lambda \subseteq \cF$ is \textit{anchored} if it is lower and if the following holds for every $j \in \mathbb{N}$:
$$
\bm{e}_j \in \Lambda \Longrightarrow 
\{\bm{e}_1,\bm{e}_2, \ldots, \bm{e}_{j} \}\subseteq \Lambda.
$$
}

Lower sets are typically used in finite-dimensional settings, with anchored sets being employed in infinite dimensions. They are a key notion we exploit in this paper. To underscore the usefulness of these structures, we remark in passing that the rates articulated in Theorems \ref{t:best_s_term_alg}--\ref{t:best_s_term_exp_1} can, up to possible changes in the constants, also be attained using $s$-term approximations in lower or anchored sets. See Appendix \ref{s:best-poly-rates}.

\section{Problem statement and main results}\label{s:prob-main-res}

In this section, we first formally define the problem we aim to solve before stating our main results.  This paper concerns algorithms for computing approximation of Hilbert-valued functions from finitely-many sample values. We  define this concept formally in a moment. For now, though, we consider that an algorithm must take a finite input and produce a finite output. Hence, in order to discuss algorithms, we first need to define what these finite inputs and outputs are in our setting.

\subsection{Samples}

Let $f \in L^2_{\varrho}(\cU;\cV)$ be the function we seek to approximate. Throughout this work, we consider $m$ \textit{sample points} $\y_1,\ldots,\y_m \in \cU$ drawn randomly and independently according to the probability measure $\varrho$. Corresponding to each sample point, we consider the noisy \textit{sample values}
\bes{
d_i = \ f(\bm{y}_i) + n_i \in \cV_h,\quad i = 1,\ldots,m,
}
where $\bm{n} = (n_i)^{m}_{i=1} \in \cV^{m}$ is an error term, referred to as the \textit{sampling error}. Observe that the samples values $d_i$ are assumed to be elements of the finite-dimensional space $\cV_h$. This is a natural assumption to make. Indeed, in the context of parametric DEs, the value $f(\bm{y})$ (the solution of the DE with parameter value $\bm{y}$) is typically computed via a (finite element) discretization of the DE, thus yielding an element of $\cV_h$, which is the corresponding discrete (finite element) space.

As a result of the assumption $d_i \in \cV_h$, the error term $n_i$ encompasses the error involved in approximating $f(\bm{y}_i) \in \cV$ by an element of $\cV_h$ (e.g., the (finite element) discretization error in the context of a parametric DE). Note that we do not specify precisely how such an approximation is performed, nor how large an error this results in. In other words, we consider the computation that evaluates $f$ at $\bm{y}_i$ as a black box. A particular case of interest is when the $d_i$ are the orthogonal projections of the exact sample values $f(\bm{y}_i)$, i.e.
\bes{
d_i = \cP_h(f(\bm{y}_i)),\qquad i = 1,\ldots,m.
}
However, we do not assume this in what follows, since in practice the numerical computation that yields the $d_i$ may not involve computing the projection $\cP_h$. Our objective is to develop algorithms for which the error scales linearly in $\nm{\bm{n}}_{2;\cV}$, the norm of the noise, thus accounting for any black box mechanism for computing the samples.

Recall that we consider a basis $\{ \varphi_k \}^{K}_{k=1}$ for $\cV_h$. We assume that the computation that evaluates $f(\bm{y}_i)$ produces the coefficients of the sample values $d_i$ in this basis (i.e.\ the finite element coefficients in the aforementioned example). Therefore, we now write the sample values as
\begin{equation}\label{data_we_get}
d_i = \ f(\bm{y}_i) + n_i = \sum^{K}_{i=1} d_{ik} \varphi_k, \quad i =1,\ldots,m,
\end{equation}
and consider the values $d_{ik} \in \bbC$ as the \textit{data} we obtain by sampling $f$.

\subsection{Problem statement}\label{ss:prob-stat}
We now formally define the  input and output of the algorithm.  The \textit{input} of the algorithm is the collection of sample points $(\bm{y}_i)^{m}_{i=1}$ and the array of $mK$ values $(d_{i,k})_{i,k=1}^{m,K} \in \bbC^{m \times K}$ defined by \eqref{data_we_get}.  

We next define the output. To this end, we first fix a multi-index set $\Lambda \subset \cF$  of size $|\Lambda| = N$ for some $N \geq 1$. This set defines a polynomial space $\cP_{\Lambda ; \cV_h}$, as in \R{span_P}, within which we shall construct the resulting polynomial approximation. Hence, we consider an approximation of the form $\hat{f} \in  \cP_{\Lambda ; \cV_h}$   given by 
\begin{equation}
\label{fhatdef}
\hat{f} : \bm{y} \mapsto \sum^{N}_{j=1} \left ( \sum^{K}_{k=1} \hat{c}_{jk}  \varphi_k \right ) \Psi_{\bnu_j}(\bm{y}),
\end{equation}
where $\hat{c}_{j,k}  \in \bbC$ for $j \in [N],k \in [K]$  and $\bm{\nu}_1,\ldots,\bm{\nu}_N$ is some indexing of the multi-indices in $\Lambda$.   In this way, we define formally the \textit{output} of the algorithm as the coefficients $(\hat{c}_{jk} )_{j,k=1}^{N,K} \in \bbC^{N \times K}$.

Finally, in order to define an algorithm we need one additional ingredient. Let
\be{
\label{Gram-matrix-def}
\bm{G} = \left ( \ip{\varphi_j}{\varphi_k}_{\cV} \right )^{K}_{j,k = 1} \in \bbC^{K \times K}
}
denote the Gram matrix of the basis $\{ \varphi_k \}^{K}_{k=1} \subset \cV_h$. Note that $\bm{G}$ is self adjoint and positive definite. However, $\bm{G}$ is only equal to the identity when $\{ \varphi_k \}^{K}_{k=1}$ is orthonormal. In what follows, we assume that it is possible to perform matrix-vector multiplications with $\bm{G}$. In other words, we have access to the function
\bes{
\cT_{\bm{G}} : \bbC^K \rightarrow \bbC^K,\ \bm{x} \mapsto \bm{G} \bm{x}.
}
For convenience, we write $F(\bm{G})$ for the maximum number of arithmetic operations and comparisons required to evaluate $\cT_{\bm{G}}(\bm{x})$ for arbitrary $\bm{x}$. Note that $F(\bm{G}) \leq K^2$ in general. However, this may be smaller when $\bm{G}$ is structured. For instance, in the case of a finite element discretization, this computation can often be performed in $\ord{K}$ operations.

\defn{
[Algorithm for polynomial approximation of Hilbert-valued functions]
\label{d:AlgPolyApprox}
Let $\Lambda \subset \cF$ of size $|\Lambda| = N$ be given, along with an indexing $\bm{\nu}_1,\ldots,\bm{\nu}_N$ of the multi-indices in $\Lambda$. An \textit{algorithm for polynomial approximation of Hilbert-valued functions from sample values} is a mapping
\bes{
\cA : \cU^m \times \bbC^{m \times K} \rightarrow \bbC^{N \times K},\ \left ( (\bm{y}_i)^{m}_{i=1} , (d_{i,k})^{m,K}_{i,k=1} \right ) \mapsto (\hat{c}_{jk})^{N,K}_{j,k=1},
}
for which the evaluation of $\cA((\bm{y}_i) , (d_{i,k}))$ 
involves only finitely-many arithmetic operations (including square roots), comparisons and evaluations of the matrix-vector multiplication function $\cT_{\bm{G}}$. If $(d_{ik})$ is as in \R{data_we_get} for some $f \in L^2_{\varrho}(\cU ; \cV)$, then the resulting approximation $\hat{f}$ of $f$ is given by \R{fhatdef}, where $(\hat{c}_{jk}) = \cA((\bm{y}_i) , (d_{i,k}))$. The \textit{computational cost} of an algorithm $\cA$ is the maximum number of arithmetic operations and comparisons (including those used in the evaluation of $\cT_{\bm{G}}$) used to compute the output from any input.
}

\rem{
\label{r:oracle-knowledge}
As formulated above, it is up to the user to choose a suitable multi-index set $\Lambda$. 
Fortunately, as we see in our main results below, this multi-index set is given simply and explicitly in terms of $m$ and another parameter $\epsilon$ (a failure probability). In particular, no `oracle' knowledge of the function being approximated is required. Thus, one can also make the stronger assertion in what follows in which the algorithm takes the same input, but outputs both the desired index set $\Lambda$ and the polynomial coefficients. For ease of presentation, we shall not do this.
}

\rem{
When $d = \infty$ each sample point $\bm{y}_i$ is an infinite sequence of real numbers. It is implicit in Definition \ref{d:AlgPolyApprox} that the algorithm only accesses finitely-many entries of this sequence. This does not cause any problems. As noted, the polynomial approximation is obtained in the index set $\Lambda$, which is a finite subset of $\cF$. Hence, the multi-indices in $\Lambda$ are nonzero only in their first $n$ entries, for some $n$. Therefore, it is only necessary to access the first $n$ entries of each sequence $\bm{y}_i$. More concretely, in our main results below, the polynomial approximation in infinite dimensions is obtained in a multi-index set $\Lambda = \Lambda^{\mathsf{HCI}}_n$ in which only the first $n$ terms can be nonzero, where $n$ is an integer given explicitly in terms of $m$ and $\epsilon$.
}

\subsection{Main results}\label{Sec_Main_results}

We now present the main results of this paper. We reiterate at this stage that these results are formulated for Chebyshev and Legendre polynomials. See \S \ref{s:conclusions} for some further discussion on other polynomial systems.

As noted above, these results employ specific choices of the index set $\Lambda$ in order to obtain the desired approximation rates. Specifically, in finite dimensions, we consider the \textit{hyperbolic cross} index set
\be{
\label{HC_index_set}
\Lambda = \Lambda^{\mathsf{HC}}_{n,d} = \left \{ \bm{\nu} = (\nu_k)^{d}_{k=1} \in \bbN^d_0 : \prod^{d}_{k=1} (\nu_k+1) \leq n \right \} \subset \bbN^d_0.
}
We term $n$ the \textit{order} of the hyperbolic cross. Note that it is common to consider \R{HC_index_set} as the hyperbolic cross of order $n-1$. We use $n$ here as it is slightly more convenient for this work. When defined this way, $\Lambda^{\mathsf{HC}}_{n,d}$ is in fact the union of all lower sets (see Definition \ref{def:lower-anchored-set}) in $d$ dimensions of size at most $n$ (see, e.g., \cite[Prop.\ 2.5]{adcock2021sparse}). Thus, this set is a natural choice for polynomial approximation.

In infinite dimensions, we define the following index set
\be{
\label{HC_index_set_inf}
\Lambda = \Lambda^{\mathsf{HCI}}_{n} = \left \{ \bm{\nu} = (\nu_k)^{\infty}_{k=1} \in \cF : \prod^{n}_{j=1} (\nu_k + 1) \leq n,\ \nu_k = 0,\ k > n \right \} \subset \cF.
}
Similarly, the union of all anchored sets (Definition \ref{def:lower-anchored-set}) of size at most $n$ in infinite dimensions is a subset  of $\Lambda^{\mathsf{HCI}}_{n}$ (see, e.g., \cite[Prop.\ 2.18]{adcock2021sparse}).
Note that $\Lambda^{\mathsf{HCI}}_{n}$ is isomorphic to $\Lambda^{\mathsf{HC}}_{n,n}$ under the restriction map $\bm{\nu} = (\nu_k)^{\infty}_{k=1} \in \cF \mapsto (\nu_k)^{n}_{k=1} \in \bbN^d_0$. For convenience, we now also define
\be{
\label{Thetadef}
N = \Theta(n,d) 
= \begin{cases} | \Lambda^{\mathsf{HC}}_{n,d} | & d < \infty ,\\ 
| \Lambda^{\mathsf{HCI}}_{n} | = |\Lambda^{\mathsf{HC}}_{n,n}| & d = \infty ,\end{cases}
}
as the cardinality of the index set employed. 
In general, the exact behaviour of $\Theta(n,d)$ is unknown. However, it admits a variety of different bounds. These are summarized as follows for $d < \infty$:
\be{
\label{N_bound}
N = | \Lambda^{\mathsf{HC}}_{n,d} | \leq \min \left \{ 2 n^3 4^d , \E n^{2 + \log(d)/\log(2)} , \frac{n (\log(n) + d \log(2))^{d-1}}{(d-1)!}  \right \}.
}
The bounds are based on \cite{kuhn2015approximation,chernov2016new}. See also \cite[Lem.\ B.3--B.5]{adcock2021sparse}. 

Finally, we also define
\be{
\label{main_alpha_def}
\alpha = \begin{cases} 1 & \mbox{Legendre},
\\
 \log(3) / \log(4) & \mbox{Chebyshev},
 \end{cases}
}
and, given $m \geq 3$ and $ \epsilon  \in (0,1)$,
\begin{equation}
\label{Ldef}
L = L(m,d,\epsilon) = \begin{cases}
\log(   {m}) \cdot \left ( \log(   {m}) \cdot \min \{ \log(   {m}) + d , \log(\E d) \cdot \log(  {m}) \} + \log(\epsilon^{-1}) \right ) & d < \infty,
\\
\log( m) \cdot \left ( \log^3( m) + \log(\epsilon^{-1}) \right ) & d = \infty.
\end{cases}
\end{equation}

\subsubsection{Algebraic rates of convergence, finite dimensions}\label{ss:main-res-alg-fin}

\thm{
[Existence of a mapping; algebraic case, finite dimensions]
\label{t:main-res-map-alg}
Let  $d \in \bbN$, $\{ \Psi_{\bm{\nu}} \}_{\bm{\nu} \in \bbN^d_0} \subset L^2_{\varrho}(\cU)$ be either the orthonormal Chebyshev or Legendre basis and $\{ \varphi_k \}^{K}_{k=1}$ be a basis for $\cV_h$. Then for every $m \geq 3$, $0 < \epsilon < 1$ and $K \geq 1$, there is a mapping 
\bes{
\cM : \cU^m \times \bbC^{m \times K} \rightarrow \bbC^{N \times K}, 
}
where  $N =\Theta(n ,d)$ is as in \R{Thetadef} with $n = \lceil m / L \rceil$ and $L = L(m,d,\epsilon)$ as in \R{Ldef},
with the following property. Let $f \in \cB(\bm{\rho})$ for arbitrary $\bm{\rho} > \bm{1}$, draw $\bm{y}_1,\ldots,\bm{y}_m$ randomly and independently according to $\varrho$ and let $(d_{ik})^{m,K}_{i,k=1} \in \bbC^{m \times K}$ be as in \R{data_we_get} for arbitrary noise terms $\bm{n} = (n_i)^{m}_{i=1} \in \cV$. Let $(\hat{c}_{jk}) = \cM((\bm{y}_i),(d_{ik}))$ and define the approximation $\hat{f}$ as in \R{fhatdef} based on the index set $\Lambda = \Lambda^{\mathsf{HC}}_{n,d}$. Then the following holds with probability at least $1-\epsilon$. The error satisfies
\ea{
\label{err-bound-map-alg_1}
\nmu{f - \hat{f}}_{L^2_{\varrho}(\cU;\cV)} \leq c_1 \cdot \zeta, \qquad \nmu{f - \hat{f}}_{L^{\infty}(\cU;\cV)}  \leq c_2 \cdot  \sqrt{\frac{m}{L}} \cdot \zeta,
}
for any $0 < p \leq 1$, where
\be{
\label{zeta-alg-def}
\zeta := C \cdot \left ( \frac{m}{c_0 L} \right )^{1/2-1/p}  +  \frac{\nm{\bm{n}}_{2;\cV} }{\sqrt{m}} +    \nmu{f - \cP_h(f)}_{L^{\infty}(\cU ; \cV)},
}
$c_0,c_1,c_2 \geq 1$ are universal constants
and 
$C = C(d,p,\bm{\rho})$ depends on $d$, $p$ and $\bm{\rho}$ only.
}

We now make several remarks about this result. The same remarks apply (with obvious modifications) to all subsequent results as well. First, notice how the index set $\Lambda$ in which the approximation is constructed is given completely explicitly in terms of $m$, $d$ and $\epsilon$. Thus, as claimed in Remark \ref{r:oracle-knowledge}, no `oracle' information about the function being approximated is required. Indeed, notice that the mapping described in this theorem is \textit{universal} in the sense that its applies equally to \textit{any} function $f \in \cB(\bm{\rho})$ and \textit{any} $\bm{\rho} > \bm{1}$.

A key aspect of this theorem is the factor $\zeta$, defined in \R{zeta-alg-def}, which determines the error bounds \R{err-bound-map-alg_1}. As claimed in \S \ref{s:main-contribs}, this incorporates three main key errors arising in the approximation process:
\bulls{
\item[(i)] \textit{The approximation error.} This is the algebraically-decaying term $E_{\textsf{app}} = C \cdot (m/(c_0 L))^{1/2-1/p}$. It is completely equivalent to the best $s$-term approximation error bound in Theorem \ref{t:best_s_term_alg}, except with $s$ replaced by $m/(c_0 L)$.
\item[(ii)] \textit{The sampling error.} This is the term $E_{\textsf{samp}} = \nm{\bm{n}}_{2;\cV} / \sqrt{m}$, where $\bm{n} = (n_i)^{m}_{i=1}$ is as in \R{data_we_get}. In other words, the effect of any errors in computing the sample values $f(\bm{y}_i)$ enters linearly in the overall error bound.
\item[(iii)] \textit{The physical discretization error.} This is the term $E_{\textsf{disc}} = \nmu{f - \cP_h(f)}_{L^{\infty}(\cU;\cV)}$. It describes the effect of working in the finite-dimensional subspace $\cV_h$, instead of the full space $\cV$. Critically, it depends on the orthogonal projection (best approximation) $\cP_h(f)$ of $f$ from $\cV_h$.
}
Notice that (i) also describes the \textit{sample complexity} of the scheme. Indeed, Theorem \ref{t:main-res-map-alg} asserts that there is a polynomial approximation that can be obtained from $m$ samples that attains the best $s$-term rate $s^{1/2-1/p}$, where $s = m/(c_0 L)$ scales like $m$ up to the polylogarithmic factor $L$.

Theorem \ref{t:main-res-map-alg} asserts the existence of a mapping that takes samples values as its input and produces the coefficients of a polynomial approximation attaining a desired error bound as its output. The mapping, as we see later, arises as a minimizer of a certain weighted $\ell^1$-minimization problem. Thus, it is not an algorithm in the sense of Definition \ref{d:AlgPolyApprox}. In the next two theorems we assert the existence of algorithms that attain the same error, plus additional algorithmic error terms.

\thm{
[Existence of an algorithm; algebraic case, finite dimensions]
\label{t:main-res-algo-alg}
Consider the setup of Theorem \ref{t:main-res-map-alg}. Then, for every $t \geq 1$, there exists an algorithm 
\bes{
\cA_t : \cU^m \times \bbC^{m \times K} \rightarrow \bbC^{N \times K}, 
}
in the sense of Definition \ref{d:AlgPolyApprox} such that the same property holds, except with \R{err-bound-map-alg_1} replaced by
\begin{equation}
\label{err-bound-algo-alg_2}
 \nmu{f - \hat{f}}_{L^2_{\varrho}(\cU;\cV)} \leq c_1 \cdot \left( \zeta +  \frac{1}{t}\right), \qquad \nmu{f - \hat{f}}_{L^{\infty}(\cU;\cV)}  \leq c_2 \cdot \sqrt{\frac{m}{L}} \cdot \left(\zeta   + \frac{1}{t}   \right) , 
\end{equation}
 where $c_1,c_2 \geq 1$ are as in \R{err-bound-map-alg_1} and $\zeta$ is as in \R{zeta-alg-def}.
The computational cost of the algorithm is bounded by
\be{
\label{comp-cost-alg-algo-fin}
c_3 \cdot \left [ m \cdot \Theta(n,d) \cdot d +  t \cdot \left ( m \cdot \Theta(n,d) \cdot K + (\Theta(n,d) + m) \cdot (F(\bm{G})+K) \right ) \cdot (\Theta(n,d))^{\alpha } \right ],
}
 where $n = \lceil m/L \rceil$ is as in Theorem \ref{t:main-res-map-alg}, $\Theta(n,d)$ is as in \R{Thetadef}, $\alpha$ is as in \R{main_alpha_def} and $c_3 > 0$  is a universal constant. 
}

The key element of this theorem is that the same error bound as in Theorem \ref{t:main-res-map-alg} is attained, up to an additional term. In particular, we have the three sources of errors (i)--(iii), plus the following:
\bulls{
\item[(iv)] \textit{The algorithmic error.} This is the error $E_{\textsf{alg}} = 1/t$ committed by the algorithm $\cA_t$ in approximately computing the mapping $\cM$ in Theorem \ref{t:main-res-map-alg}. It is given in terms of the parameter $t$, which also enters linearly into the computational cost estimate \R{comp-cost-alg-algo-fin}.
}
Unfortunately, the $1/t$ decay rate of the algorithmic error is slow. Thus, it may be computationally expensive to compute an approximation to within a desired error bound. Fortunately, as we now explain, it is possible to improve it to $\E^{-t}$ subject to an additional technical assumption.

\thm{
[Existence of an efficient algorithm; algebraic case, finite dimensions]
\label{t:main-res-effic-algo-alg}
Consider the setup of Theorem \ref{t:main-res-map-alg}. Then for every $t \geq 1$ and $\zeta' > 0$ there exists an algorithm 
\bes{
\cA_{t,\zeta'} : \cU^m \times \bbC^{m \times K} \rightarrow \bbC^{N \times K}, 
}
in the sense of Definition \ref{d:AlgPolyApprox} such that the same property holds whenever $\zeta' \geq \zeta$, except with \R{err-bound-map-alg_1} replaced by
\begin{equation}
\label{err-bound-effic-algo-alg_2}
 \nmu{f - \hat{f}}_{L^2_{\varrho}(\cU;\cV)} \leq c_1 \cdot \left( \zeta +  \zeta' + \E^{-t} \right), \qquad \nmu{f - \hat{f}}_{L^{\infty}(\cU;\cV)}  \leq c_2 \cdot \sqrt{\frac{m}{L}} \cdot\left( \zeta +  \zeta' + \E^{-t}  \right) , 
\end{equation}
where $c_1,c_2 \geq 1$ are as in \R{err-bound-map-alg_1} and $\zeta$ is as in \R{zeta-alg-def}.
The computational cost of the algorithm is bounded by
\bes{
c_3 \cdot \left [ m \cdot \Theta(n,d) \cdot d +  t \cdot \left ( m \cdot \Theta(n,d) \cdot K + (\Theta(n,d) + m) \cdot (F(\bm{G})+K) \right ) \cdot (\Theta(n,d))^{\alpha } \right ],
}
 where $n = \lceil m/L \rceil$ is as in Theorem \ref{t:main-res-map-alg}, $\Theta(n,d)$ is as in \R{Thetadef}, $\alpha$ is as in \R{main_alpha_def} and $c_3 > 0$  is a universal constant.
}

We refer to this as an `efficient' algorithm, since the parameter $t$ enters linearly in the computational cost but the algorithmic error scales like $\E^{-t}$. The main limitation of this result is that the algorithm parameter $\zeta'$ needs to be an upper bound for the true error bound $\zeta$ in order for \R{err-bound-effic-algo-alg_2} to hold. This is a technical assumption for the proof, and does not appear necessary in practice. We demonstrate this phenomenon through numerical experiment in \S \ref{s:num-exp}.

\subsubsection{Algebraic rates of convergence, infinite dimensions}

We now consider algebraic rates of convergence in the infinite-dimensional setting. The next three results should be compared against the best $s$-term approximation result, Theorem \ref{t:best_s_term_alg_inf}.

\thm{
[Existence of a mapping; algebraic case, infinite dimensions]
\label{t:main-res-map-alg_infty}
Let  $d = \infty$, $\{ \Psi_{\bm{\nu}} \}_{\bm{\nu} \in \bbN^d_0} \subset L^2_{\varrho}(\cU)$ be either the orthonormal Chebyshev or Legendre basis and $\{ \varphi_k \}^{K}_{k=1}$ be a basis for $\cV_h$. Then for every $m \geq 3$, $0 < \epsilon < 1$ and $K \geq 1$, there is a mapping
\bes{
\cM : \cU^m \times \bbC^{m \times K} \rightarrow \bbC^{N \times K}, 
}
where  $N =\Theta(n ,d)$ is as in \R{Thetadef} with $n = \lceil m / L \rceil$, where $L = L(m,d,\epsilon)$ is as in \R{Ldef},
with the following property. Let $\varepsilon > 0$, $0 < p < 1$ and $\bm{b} \in \ell^p(\bbN)$, $\bm{b} > \bm{0}$, be monotonically nonincreasing. Let $f \in \cB(\bm{b},\varepsilon)$, draw $\bm{y}_1,\ldots,\bm{y}_m$ randomly and independently according to $\varrho$ and let $(d_{ik})^{m,K}_{i,k=1} \in \bbC^{m \times K}$ be as in \R{data_we_get} for arbitrary noise terms $\bm{n} = (n_i)^{m}_{i=1} \in \cV$. Let $(\hat{c}_{jk}) = \cM((\bm{y}_i),(d_{ik}))$ and define the approximation $\hat{f}$ as in \R{fhatdef} based on the index set $\Lambda = \Lambda^{\mathsf{HCI}}_{n}$. Then the following holds with probability at least $1-\epsilon$. The error satisfies
\ea{
\label{err-bound-map-alg-inf_1}
\nmu{f - \hat{f}}_{L^2_{\varrho}(\cU;\cV)} \leq c_1 \cdot \zeta, \qquad \nmu{f - \hat{f}}_{L^{\infty}(\cU;\cV)}  \leq c_2 \cdot  \sqrt{\frac{m}{L}} \cdot \zeta,
}
where
\be{
\label{zeta-alg-inf-def}
\zeta := C \cdot \left ( \frac{m}{c_0 L} \right )^{1/2-1/p}  +  \frac{\nm{\bm{n}}_{2;\cV} }{\sqrt{m}} +    \nmu{f - \cP_h(f)}_{L^{\infty}(\cU ; \cV)},
}
$c_0,c_1,c_2 \geq 1$ are universal constants
and 
$C = C(\bm{b},\varepsilon,p)$ depends on $\bm{b}$, $\varepsilon$ and $p$ only.
}

\thm{
[Existence of an algorithm; algebraic case, infinite dimensions]
\label{t:main-res-algo-alg_infty}
Consider the setup of Theorem \ref{t:main-res-map-alg_infty}. Then, for every $t \geq 1$, there exists an algorithm 
\bes{
\cA_t : \cU^m \times \bbC^{m \times K} \rightarrow \bbC^{N \times K}, 
}
in the sense of Definition \ref{d:AlgPolyApprox} such that the same property holds, except with \R{err-bound-map-alg-inf_1} replaced by
\begin{equation}
\label{err-bound-algo-alg-inf_2}
 \nmu{f - \hat{f}}_{L^2_{\varrho}(\cU;\cV)} \leq c_1 \cdot \left( \zeta +  \frac{1}{t}\right), \qquad \nmu{f - \hat{f}}_{L^{\infty}(\cU;\cV)}  \leq c_2 \cdot \sqrt{\frac{m}{L}} \cdot \left(\zeta   + \frac{1}{t}   \right) , 
\end{equation}
 where $c_1,c_2 \geq 1$ are as in \R{err-bound-map-alg-inf_1} and $\zeta$ is as in \R{zeta-alg-inf-def}.
The computational cost of the algorithm is bounded by
\bes{
c_3 \cdot \left [ m \cdot \Theta(n,\infty) \cdot n +  t \cdot \left ( m \cdot \Theta(n,\infty) \cdot K + (\Theta(n,\infty) + m) \cdot (F(\bm{G})+K) \right ) \cdot (\Theta(n,\infty))^{\alpha } \right ],
}
 where $n = \lceil m/L \rceil$ is as in Theorem \ref{t:main-res-map-alg_infty}, $\Theta(n,\infty)$ is as in \R{Thetadef}, $\alpha$ is as in \R{main_alpha_def} and $c_3 > 0$  is a universal constant.
}

In finite dimensions, the computational cost estimate \R{comp-cost-alg-algo-fin} is somewhat difficult to interpret, since its behaviour depends on the relative sizes of $m$ and $d$. Fortunately, in infinite dimensions we can give a more informative assessment. Suppose, for simplicity, that $K$ is fixed (for example, $K = 1$ in the case of a scalar-valued function approximation problem). Then the computational cost is bounded by
\bes{
c \cdot m \cdot \Theta(n,\infty) \cdot n  + c_K \cdot t \cdot m \cdot \Theta(n,\infty)^{\alpha+1},
}
where $c > 0$ is a universal constant $c_K > 0$ is a constant depending on $K$ only.
Recall from \R{Thetadef} that $ \Theta(n,\infty) = | \Lambda^{\mathsf{HCI}}_{n} | = | \Lambda^{\mathsf{HC}}_{n,n} |$. Now, when $d = n$ and $n$ is sufficiently large, the minimum in \R{N_bound} is attained by the second term $\E n^{2 + \log(n)/\log(2)} $. Substituting this into the above expression and recalling that $n = \lceil m / L \rceil$, where $L = L(m,\infty,\epsilon)$, we deduce that the computational cost is bounded by
\bes{
c_K \cdot  t \cdot m \cdot g(m)^{(\alpha + 1)\log (4g(m))/ \log(2) },\qquad g(m) : = \left \lceil \frac{m}{\log(m) \cdot ( \log^3(m) + \log(\epsilon^{-1})) } \right \rceil. 
}
Since $m \geq 3$ by assumption, we have $\log(m) \geq 1$ and therefore $g(m) \leq m$. Hence, this admits the slightly looser upper bound
\bes{
 c_K \cdot t \cdot m^{1+(\alpha+1) \log (4m)/\log(2)}.
}
We conclude that the computational cost (for fixed $K$ and $t$) is \textit{subexponential} in $m$. Further, if we choose $t = m^{1/p-1/2}$ in accordance with the algebraically-decaying term in \R{zeta-alg-inf-def}, then we conclude the following: \textit{it is possible to approximate a holomorphic function of infinitely-many variables with error decaying algebraically fast in $m$ via an algorithm whose computational cost is subexponential in $m$.} Whether this can be reduced to an algebraic cost is an open problem.

\thm{
[Existence of an efficient algorithm; algebraic case, infinite dimensions]
\label{t:main-res-effic-algo-alg_infty}
Consider the setup of Theorem \ref{t:main-res-map-alg_infty}. Then, for every $t \geq 1$ and $\zeta' > 0$ there exists an algorithm
\bes{
\cA_{t,\zeta'} : \cU^m \times \bbC^{m \times K} \rightarrow \bbC^{N \times K}, 
}
in the sense of Definition \ref{d:AlgPolyApprox} such that the same property holds whenever $\zeta' \geq \zeta$, except with \eqref{err-bound-map-alg-inf_1} replaced by
\begin{equation}
 \nmu{f - \hat{f}}_{L^2_{\varrho}(\cU;\cV)} \leq c_1 \cdot \left( \zeta +  \zeta' + \E^{-t} \right), \qquad \nmu{f - \hat{f}}_{L^{\infty}(\cU;\cV)}  \leq c_2 \cdot \sqrt{\frac{m}{L}} \cdot\left( \zeta +  \zeta' + \E^{-t}  \right) , 
\end{equation}
 where $c_1,c_2 \geq 1$ are as in  \R{err-bound-map-alg-inf_1} and and $\zeta \leq \zeta'$ is as in \R{zeta-alg-inf-def}.
The computational cost of the algorithm is bounded by
\bes{
c_3 \cdot \left [ m \cdot \Theta(n,\infty) \cdot n +  t \cdot \left ( m \cdot \Theta(n,\infty) \cdot K + (\Theta(n,\infty) + m) \cdot (F(\bm{G})+K) \right ) \cdot (\Theta(n,\infty))^{\alpha } \right ],
}
 where $n = \lceil m/L \rceil$ is as in Theorem \ref{t:main-res-map-alg_infty}, $\Theta(n,\infty)$ is as in \R{Thetadef}, $\alpha$ is as in \R{main_alpha_def} and $c_3 > 0$  is a universal constant.
}

\subsubsection{Exponential rates of convergence, finite dimensions}\label{ss:main-res-exp}

Finally, we consider exponential rates of convergence in finite dimensions. The following results should be compared against Theorem \ref{t:best_s_term_exp_1}.

\thm{
[Existence of a mapping; exponential case, finite dimensions]
\label{t:main-res-map-alg_exp}
Let $d \in \bbN$, $\{ \Psi_{\bm{\nu}} \}_{\bm{\nu} \in \bbN^d_0} \subset L^2_{\varrho}(\cU)$ be either the orthonormal Chebyshev or Legendre basis and $\{ \varphi_k \}^{K}_{k=1}$ be a basis for $\cV_h$. Then for every $m \geq 3$, $0 < \epsilon < 1$ and $K \geq 1$, there is a mapping
\bes{
\cM : \cU^m \times \bbC^{m \times K} \rightarrow \bbC^{N \times K}, 
}
where  $N =\Theta(n,d)$ is as in \R{Thetadef} with  
\be{
\label{n-exp-case}
n = \begin{cases} \lceil \sqrt{m/L} \rceil& \mbox{Legendre,} \\ \lceil m/(2^d L)\rceil & \mbox{Chebyshev,} \end{cases} 
}
 and $L$ as in \R{Ldef}, with the following property. Draw $\bm{y}_1,\ldots,\bm{y}_m$ randomly and independently according to $\varrho$. Then, with probability at least $1-\epsilon$, the following holds. Let $f \in \cB(\bm{\rho})$ for arbitrary $\bm{\rho} > \bm{1}$, $(d_{ik})^{m,K}_{i,k=1} \in \bbC^{m \times K}$ be as in \R{data_we_get} for arbitrary noise terms $\bm{n} = (n_i)^{m}_{i=1} \in \cV$, $(\hat{c}_{jk})^{N,K}_{j,k=1} = \cM((\bm{y}_i)^{m}_{i=1},(d_{ik})^{m,k}_{i,k=1})$ and define the approximation $\hat{f}$ as in \R{fhatdef} based on the index set $\Lambda = \Lambda^{\mathsf{HC}}_{n,d}$. Then the error satisfies
\be{
\label{err-bound-map-alg_exp}
\nmu{f - \hat{f}}_{L^2_{\varrho}(\cU;\cV)} \leq  c_1 \cdot \zeta  ,\qquad 
\nmu{f - \hat{f}}_{L^{\infty}(\cU;\cV)}  \leq c_2  \cdot \sqrt{\frac{m}{L}} \cdot \zeta,
}
for any
\bes{
0 < \gamma < (d+1)^{-1} \left ( d! \prod^{d}_{j=1} \log(\rho_j) \right )^{1/d},
}
where
\be{
\label{zeta-alg-def_exp}
\zeta := C \cdot \left \{ \begin{array}{cc} \exp\left (-\frac{\gamma}{2} \left ( \frac{m}{c_0 L} \right )^{\frac1d} \right ) & \mbox{Chebyshev} \\ \exp \left (-\gamma \left (\frac{m}{c_0 L} \right )^{\frac{1}{2d}} \right ) & \mbox{Legendre}  \end{array} \right . +  \frac{\nm{\bm{n}}_{2;\cV} }{ \sqrt{m}} +  \nmu{f - \cP_h(f)}_{L^{\infty}(\cU ; \cV)},
}
$c_0,c_1,c_2 \geq 1$ are universal  constants and $C = C(d,\gamma,\bm{\rho})$ depends on $d$, $\gamma$ and $\bm{\rho}$ only.
}

\thm{
[Existence of an algorithm; exponential case, finite dimensions]
\label{t:main-res-algo-alg_exp}
Consider the setup of Theorem \ref{t:main-res-map-alg_exp}. Then, for every $t \geq 1$, there exists an algorithm
\bes{
\cA_t : \cU^m \times \bbC^{m \times K} \rightarrow \bbC^{N \times K}, 
}
in the sense of Definition \ref{d:AlgPolyApprox} such that the same property holds, except with \R{err-bound-map-alg_exp} replaced by
\begin{equation}
\label{err-bound-algo-alg_2_inf}
\nmu{f - \hat{f}}_{L^2_{\varrho}(\cU;\cV)} \leq c_1 \cdot \left( \zeta +  \frac{1}{t} \right), \qquad \nmu{f - \hat{f}}_{L^{\infty}(\cU;\cV)}  \leq c_2\cdot \sqrt{\frac{m}{L}} \cdot \left(\zeta   + \frac{1}{t}  \right), 
\end{equation}
 where $c_1,c_2 \geq 1$ are as in \R{err-bound-map-alg_exp} and $\zeta$ is as in \R{zeta-alg-def_exp}. 
 The computational cost of the algorithm is bounded by
\bes{
c_3 \cdot \left [ m \cdot \Theta(n,d) \cdot n +  t \cdot \left ( m \cdot \Theta(n,d) \cdot K + (\Theta(n,d) + m) \cdot (F(\bm{G})+K) \right ) \cdot (\Theta(n,d))^{\alpha } \right ],
}
 where $n $ is as in \R{n-exp-case}, $\Theta(n,d)$ is as in \R{Thetadef}, $\alpha$ is as in \R{main_alpha_def} and $c_3 > 0$  is a universal constant.
}

\thm{
[Existence of an efficient algorithm; exponential case, finite dimensions]
\label{t:main-res-effic-algo-alg_exp}
Consider the setup of Theorem \ref{t:main-res-map-alg_exp}. Suppose that there is a known upper bound $\zeta' \geq \zeta$, where $\zeta$ is as in \R{zeta-alg-def_exp}. Then, for every $t \geq 1$ and $\zeta' >0$ there exists an algorithm
\bes{
\cA_{t,\zeta'} : \cU^m \times \bbC^{m \times K} \rightarrow \bbC^{N \times K}, 
}
in the sense of Definition \ref{d:AlgPolyApprox}  for which the same property holds whenever $\zeta' \geq \zeta$, except with \eqref{err-bound-map-alg_exp} replaced by
\begin{equation}
\label{err-bound-algo-alg_3_exp}
\nmu{f - \hat{f}}_{L^2_{\varrho}(\cU;\cV)} \leq c_1 \cdot \left(\zeta + \zeta' +  \E^{-t} \right) , \qquad
\nmu{f - \hat{f}}_{L^{\infty}(\cU;\cV)}  \leq c_2 \cdot  \sqrt{\frac{m}{L}}\left (\zeta+ \zeta'  +   \E^{-t} \right ),
\end{equation}
where  $c_1,c_2 \geq 1$ are as in \R{err-bound-map-alg_exp} and $\zeta$ is as in \R{zeta-alg-def_exp}. The computational cost of the algorithm is bounded by
\bes{
c_3 \cdot \left [ m \cdot \Theta(n,d) \cdot n +  t \cdot \left ( m \cdot \Theta(n,d) \cdot K + (\Theta(n,d) + m) \cdot (F(\bm{G})+K) \right ) \cdot (\Theta(n,d))^{\alpha } \right ],
}
 where $n $ is as in \R{n-exp-case}, $\Theta(n,d)$ is as in \R{Thetadef}, $\alpha$ is as in \R{main_alpha_def} and $c_3 > 0$  is a universal constant.
}

As before,  suppose  that $K$ is fixed and, since we consider exponential rates, that $d$ is also fixed. Then, using the third estimate in \R{N_bound}, we deduce that the computational cost  of this algorithm is bounded by
\bes{
c_{K,d} \cdot ( m \cdot  n^2  \cdot (\log(n))^{d-1}+ t \cdot m \cdot (n \cdot (\log(n))^{d-1})^{\alpha+1}).
}
Using the crude bound $n \leq m$, we deduce the bound
\bes{
c_{K,d} \cdot \left(  t \cdot m^{\alpha+2} (\log(m))^{(d-1)(\alpha+1)} \right).
}
Thus, for fixed $t$, the computational cost is polynomial in $m$ as $m \rightarrow \infty$. In particular, with the efficient algorithm of Theorem \ref{t:main-res-effic-algo-alg_exp} (subject to the caveat that an upper bound for the error is known) we deduce the following: \textit{in fixed dimension $d$, it is possible to approximate a holomorphic function with error decaying exponentially fast in $m$ via an algorithm whose computational cost is polynomial in $m$.} Whether the polynomial growth rate described above is sharp is an open problem.

\rem{
\label{rem:uniform-vs-nonuniform}
There is a subtle difference between the algebraic and exponential results. The former are \textit{nonuniform} in the sense that a single draw of the sample points $\bm{y}_1,\ldots,\bm{y}_m$ is sufficient for recovery of a fixed function $f$ with high probability up to the specified error bound. The latter are \textit{uniform}, since a single draw of  the sample points $\bm{y}_1,\ldots,\bm{y}_m$ is sufficient for recovery of any function with high probability up to the specified error bound. The reason for this difference stems from bounding a \textit{discrete} error term \R{discrete-error}, which is a random variable dependent on $f$ and the sample points. In the algebraic case, in order to obtain the desired algebraic exponent $1/2-1/p$ we bound this term with high probability for each fixed $f$. See Step 4 of the proof of Theorem \ref{t:main-res-map-alg_inex}. This renders the ensuing result nonuniform. Conversely, in the exponential case (where the appearance of small algebraic factors is not a concern, since they can be absorbed into the exponentially-decaying term) we bound this term with probability one for \textit{any} $f$. See Step 4 of the proof of Theorem \ref{t:main-res-map-alg_inex_exponential}. Note that one could also derive uniform guarantees in the algebraic case by considering a fixed value of $p$ and letting $\cM$ and $\cA$ depend on $p$, or by considering a restricted range $0 < p \leq p^* < 1$. Both strategies involve a larger value of $n$, with its size depending on $p$ or $p^*$. See \cite[\S 7.6.2]{adcock2021sparse} for further discussion.
}

\section{Construction of the algorithms}\label{s:constr-algs}

In this section, we describe the construction of the algorithms asserted in our main results. These are based on techniques from compressed sensing \cite{adcock2021compressive,adcock2021sparse,foucart2013mathematical} on the premise that the polynomial coefficients of a holomorphic function are approximately sparse. There are several main differences between standard compressed sensing and what we develop below. First, following \cite{chkifa2018polynomial,adcock2018infinite,adcock2018compressed,adcock2021sparse,rauhut2016interpolation,rauhut2017compressive}, we work in a weighted setting in order to promote sparsity in lower or anchored sets (recall \S \ref{s:lower-anchored}). Second, following \cite{dexter2019mixed}, we work with Hilbert-valued vectors, whose entries take values in the Hilbert space $\cV$. Finally, so as to avoid unrealistic assumptions on the functions being approximated, we use consider \textit{noise-blind} decoders, as in \cite{adcock2019correcting}. See also Remark \ref{r:noise-blind}. 

\subsection{Recovery via Hilbert-valued, weighted $\ell^1$-minimization}\label{SRecovery}

We first require some additional notation. Given $N \in \bbN$ we let $\cV^{N}$ be the vector space of Hilbert-valued vectors of length $N$, i.e.\ $\bm{\v} = ( v_i)^{N}_{i=1}$ where $v_i \in \cV$, $i=1,\ldots,N$. Next, given $\Lambda \subseteq \cF$ and a vector of positive weights $\bm{w} = (w_{\bm{\nu}})_{\bm{\nu} \in \Lambda}$, where $\bm{w} > \bm{0}$, we define the weighted $\ell_w^p(\Lambda;\cV)$ space, $0 < p \leq 2$, as the set of $\cV$-valued sequences $\bm{v} = (v_{\bm{\nu}})_{\bm{\nu} \in \Lambda}$ for which  
\bes{
\nmu{\bm{v}}_{p,\w;\cV} : = \left ( \sum_{\bnu \in \Lambda} w^{2-p}_{\bnu} \nm{v_{\bnu}}^p_{\cV} \right )^{1/p} < \infty.
}
Notice that $\ell^2_{\bm{w}}(\Lambda ; \cV)$ coincides with the unweighted space $\ell^2(\Lambda ; \cV)$. 

Now, let $\Lambda \subset \cF$ be a finite multi-index set of size $|\Lambda | = N$ and consider the ordering $\Lambda = \{ \bm{\nu}_1,\ldots,\bm{\nu}_N \}$. Note that we will, in practice, choose either $\Lambda = \Lambda^{\mathsf{HC}}_{n,d}$ when $d < \infty$ or $\Lambda = \Lambda^{\mathsf{HCI}}_n$ when $d = \infty$, where the order $n$ is as described in the corresponding theorem (Theorems \ref{t:main-res-map-alg}--\ref{t:main-res-effic-algo-alg_exp}). With this in mind,  given $f \in L^2_{\varrho}(\cU ; \cV)$, define
\be{
\label{f_exp_trunc}
f_{\Lambda} = \sum_{\bm{\nu} \in \Lambda} c_{\bm{\nu}} \Psi_{\bm{\nu}}
}
as the truncated expansion of $f$ based on the index set $\Lambda$ and
\be{
\label{f_coeff_trunc}
\bm{c}_{\Lambda} = (c_{\bm{\nu}_j})^{N}_{j=1} \in \cV^N
}
as the finite vector of coefficients of $f$ with indices in $\Lambda$. As explained in \S \ref{ss:prob-stat}, our objective is, in effect, to approximate these coefficients.

We do this as follows. Given $\bm{y}_1,\ldots,\bm{y}_m \in \cU$, we define the normalized measurement matrix
\begin{equation}\label{def-measMatrix}
\bm{A} =\left(\frac{\Psi_{\bnu_j} (\y_i)}{\sqrt{m}} \right)^{m,N}_{i,j=1} \in \bbC^{m \times N}
\end{equation}
and the normalized measurement and error vectors
\be{
\label{def-measVec}
\bm{b} = \frac{1}{\sqrt{m}} \left ( f(\bm{y}_i) + n_i \right )^{m}_{i=1} \in \cV^m_h,\qquad \bm{e} = \frac{1}{\sqrt{m}} (n_i)^{m}_{i=1} \in \cV^m.
}
Notice that any $m \times N$ matrix $\bm{A} = (a_{ij})^{m,N}_{i,j=1}$ extends to a bounded linear operator $\cV^N \rightarrow \cV^m$ (or $\cV^N_h \rightarrow \cV^m_h$) in the obvious way, i.e.,
\be{
\label{def-inducedBLO}
\bm{x} = (x_i)^{N}_{i=1} \in \cV^N \mapsto \bm{A} \bm{x} = \left ( \sum^{N}_{j=1} a_{ij} x_j \right )^{m}_{i=1} \in \cV^m.
}
For ease of notation, we make no distinction between the matrix $\bm{A} \in \bbC^{m \times N}$ and the linear operator $\bm{A} \in \cB(\cV^N,\cV^m)$ (or $\bm{A} \in \cB(\cV^N_h,\cV^m_h)$) in what follows. Using this, we obtain
\bes{
\bm{A} \bm{c}_{\Lambda} = \frac{1}{\sqrt{m}} \left ( f_{\Lambda}(\bm{y}_i) \right )^{m}_{i=1} =  \frac{1}{\sqrt{m}} (f(\bm{y}_i))^{m}_{i=1} - \frac{1}{\sqrt{m}} \left ( f(\bm{y}_i) - f_{\Lambda}(\bm{y}_i) \right )^{m}_{i=1},
}
and therefore
\be{
\label{linsys_for_cLambda}
\bm{A} \bm{c}_{\Lambda} + \bm{e} + \bm{e'} = \bm{b},
}
where
\bes{
\bm{e'} = \frac{1}{\sqrt{m}}  \left ( f(\bm{y}_i) - f_{\Lambda}(\bm{y}_i) \right )^{m}_{i=1}.
}
We have now formulated the recovery of $\bm{c}_{\Lambda}$ as the solution of a noisy linear system \R{linsys_for_cLambda}, where the noise term $\bm{e} + \bm{e}'$ encompasses both the noise $\bm{e} = (n_i)^{m}_{i=1} / \sqrt{m}$ in the sample values and the error $\bm{e}'$ due to the truncation \R{f_exp_trunc} of the infinite expansion \R{f_exp} via the index set $\Lambda$. 

Due to the discussion in \S \ref{ss:seq-space}--\ref{s:lower-anchored}, we expect the coefficients $\bm{c}_{\Lambda}$ to not only be approximately sparse, but also well approximated by a subset of $s$ coefficients whose indices define a lower or anchored set. In classical compressed sensing, one exploits sparse structure via minimizing an $\ell^1$-norm. To exploit sparse and lower structure, we follow ideas of \cite{chkifa2018polynomial,adcock2018infinite,adcock2018compressed,adcock2021sparse} and use a weighted $\ell^1$-norm penalty. Specifically, we now compute an approximate solution via the Hilbert-valued, \textit{weighted Square-Root LASSO (SR-LASSO)} optimization problem
\be{
\label{wsr-LASSO}
\min_{\bm{z} \in \cV^N_h} \cG(\z),\qquad \cG(\z) : = \lambda \nm{\bm{z}}_{1,\w;\cV} + \nmu{\bm{A} \bm{z} - \bm{b} }_{2;\cV} .
}
Here $\lambda>0$ is a tuning hyperparameter. 

\rem{
\label{r:noise-blind}
As an alternative to solve this Hilbert-valued compressed sensing problem, we could use a formulation based on a constrained basis pursuit or unconstrained LASSO problem. However, we consider the  SR-LASSO problem \eqref{wsr-LASSO}  instead. While other approaches are arguably more common, based on \cite{adcock2019correcting} the SR-LASSO has the desirable property that the optimal values of its hyperparameter $\lambda$ is independent of the noise term (in this case $\bm{e} + \bm{e}'$). This is not the case for other formulations, whose hyperparameters need to be chosen in terms of the (unknown) magnitude of the noise in order to ensure good theoretical and practical performance (see, e.g., \cite[Chpt.\ 6]{adcock2021compressive}). This is particularly problematic in the setting of function approximation, where such terms are function dependent (for instance, the term $\bm{e}'$ depends on the expansion tail $f - f_{\Lambda}$) and therefore generally unknown. See \cite{adcock2019correcting} and \cite[\S 6.6]{adcock2021sparse} for further discussion.
}

Notice that \R{wsr-LASSO} is solved over $\cV^N_h$ not $\cV^N$, since the latter would not be numerically solvable in general. As we see below, it can be reformulated an optimization problem over $\bbC^{N \times K}$, where $K = \dim(\cV_h)$. However, since the true coefficients of $f$ are elements of $\cV$ and not $\cV_h$, this discretization inevitably results in an additional error, 
which must also be accounted for in the analysis. This leads precisely to the physical discretization error (term (iii) in \S \ref{ss:main-res-alg-fin}).

Finally, we now also specify the weights. Following \cite{adcock2018infinite,chkifa2018polynomial,adcock2018compressed} (see also \cite[Rem.\ 2.14]{adcock2021sparse}), a good choice of weights (for promoting lower or anchored structure) is given by the so-called \textit{intrinsic weights}
\be{
\label{weights_def}
\bm{w} = \bm{u} = (u_{\bm{\nu}})_{\bm{\nu} \in \Lambda},\quad u_{\bm{\nu}} = \nm{\Psi_{\bm{\nu}}}_{L^{\infty}(\cU)}, \ \bnu \in \Lambda. 
}
In particular, for Chebyshev and Legendre polynomials these are given explicitly by
\begin{equation*}
u_{\bm{\nu}} =  \nm{\Psi_{\bm{\nu}}}_{L^{\infty}(\cU)} = \begin{cases} 
      \prod^{d}_{j=1} \sqrt{2 \nu_j +1}  , & \mathrm{Legendre,} \\
      2^{\nm{\bnu}_0/2} ,& \mathrm{Chebyshev,} \\
   \end{cases}
\end{equation*}
where $\nmu{\bm{\nu}}_0:= |\mathrm{supp}(\nu)|$. Typically, we index these weights over the multi-indices $\bm{\nu} \in \Lambda$. However, we will, for convenience, write $w_{i}$ instead of $w_{\bm{\nu}_i}$ in what follows, where, as above $\{ \bm{\nu}_1,\ldots,\bm{\nu}_N \}$ is an ordering of $\Lambda$.

\subsection{Reformulation as a matrix recovery problem and the mappings in Theorems \ref{t:main-res-map-alg}, \ref{t:main-res-map-alg_infty} and \ref{t:main-res-map-alg_exp}}
We now describe the mappings whose existence is asserted in Theorems \ref{t:main-res-map-alg}, \ref{t:main-res-map-alg_infty} and \ref{t:main-res-map-alg_exp}. These maps all arise via exact solutions of weighted SR-LASSO optimization problems. However, since \R{wsr-LASSO} yields a vector in $\cV^{N}_{h}$ and the mappings should yield outputs in $\bbC^{N \times K}$, we first need to reformulate \R{wsr-LASSO} using the basis $\{ \varphi_i \}^{K}_{i=1}$ for $\cV_h$.

Notice first that any vector of coefficients $\bm{c} = (c_{\bm{\nu}_i})^{N}_{i=1} \in \cV^N_h$ is equivalent to a matrix of coefficients
\bes{
\bm{C} = (c_{ik})^{N,K}_{i,k = 1} \in \bbC^{N \times K},
}
via the relation
\bes{
c_{\bm{\nu}_i} = \sum^{K}_{k=1} c_{ik} \varphi_k,\quad i \in [N].
}
Next, observe that if $g = \sum^{K}_{k=1} d_k \varphi_k \in \cV_h$ then
\be{
\label{write_Vnorm_as_G}
\nm{g}_{\cV} = \nm{\bm{d}}_{\bm{G}} = \sqrt{\bm{d}^{*} \bm{G} \bm{d}},
}
where $\bm{d} = (d_k)^{K}_{k=1} \in \bbC^K$ and $\bm{G} \in \bbC^{K \times K}$ is the Gram matrix for $\{ \varphi_k \}^{K}_{k=1}$, given by \R{Gram-matrix-def}. Since $\bm{G}$ is positive definite, it has a unique positive definite square root matrix $\bm{G}^{1/2}$. Hence we may write
\bes{
\nm{g}_{\cV} = \nmu{\bm{G}^{1/2} \bm{d}}_2.
}
We now use some additional notation. Given $1 \leq p \leq \infty$ and $1 \leq q \leq 2$, we define the weighted $\ell^{p,q}_{\bm{w}}$-norm of a matrix $\bm{C} = (c_{ik})^{N,K}_{i,k = 1} \in \bbC^{N \times K}$ as
\bes{
\nm{\bm{C}}_{p,q,\bm{w}} = \left ( \sum^{N}_{i=1} w^{2-p}_i \left ( \sum^{K}_{k=1} |c_{ik} |^q \right )^{p/q} \right )^{1/p}.
}
Note that this is precisely the weighted $\ell^p_{\bm{w}}$-norm of the vector of $( \nm{\bm{c}_i}_q )^{N}_{i=1}$, where $\bm{c}_i = (c_{ik})^{K}_{k=1} \in \bbC^K$ is the $i$th row of $\bm{C}$. Further, if $p = q = 2$, then this is just the unweighted $\ell^{2,2}$-norm of a matrix (which is simply its Frobenius norm). In this case, we typically write $\nm{\cdot}_{2,2}$.

Now let $\bm{z} \in \cV^N_h$ be arbitrary, $\bm{Z} \in \bbC^{N \times K}$ be the corresponding matrix and $\bm{z}_i \in \bbC^K$ be the $i$th row of $\bm{Z}$. Then
\bes{
\nm{\bm{z}}_{1,\bm{w} ; \cV} = \sum^{N}_{i=1} w_{i} \nm{z_{\bm{\nu}_i}}_{\cV} = \sum^{N}_{i=1} w_i \nmu{\bm{G}^{1/2}\bm{z}_i}_{2} = \nmu{\bm{Z} \bm{G}^{1/2}}_{2,1,\bm{w}}.
}
Similarly, let $\bm{A} = (a_{ij})^{m,N}_{i,j=1} \in \bbC^{m \times N}$ and $\bm{b} = (b_i)^{m}_{i=1} \in \cV^m_h$ be as in \R{def-measMatrix} and \R{def-measVec}, respectively, and let $\bm{B} \in \bbC^{m \times K}$ be the matrix corresponding to $\bm{b}$. Then
\bes{
\nmu{\bm{A} \bm{z} - \bm{b}}^2_{2;\cV} = \sum^{m}_{i=1} \nm{\sum^{N}_{j=1} a_{ij} z_{\bm{\nu}_i} - b_i }^2_{\cV} = \nmu{(\bm{A} \bm{Z} - \bm{B}) \bm{G}^{1/2}}^2_{2,2}.
}
Therefore, we now consider the minimization problem
\be{
\label{wsr-LASSO-coeff}
\min_{\bm{Z} \in \bbC^{N \times K}} \left \{ \lambda \nm{\bm{Z}}_{2,1,\bm{w}} + \nmu{(\bm{A} \bm{Z} - \bm{B}) \bm{G}^{1/2}}_{2,2} \right \}.
}
This is equivalent to \R{wsr-LASSO} in the following sense. A vector $\hat{\bm{c}} = (\hat{c}_{\bm{\nu}_i})^{N}_{i=1} \in \cV^N_{h}$ is a minimizer of \R{wsr-LASSO} if and only if the matrix $\widehat{\bm{C}} = (\hat{c}_{ik})^{N,K}_{i,k = 1} \in \bbC^{N \times K}$ with entries defined by the relation
\bes{
\hat{c}_{\bm{\nu}_i} = \sum^{K}_{k=1} \hat{c}_{ik} \varphi_k,\quad i \in [N],
}
is a minimizer of \R{wsr-LASSO-coeff}. 

With this in hand, we are now ready to define the mappings used in Theorems \ref{t:main-res-map-alg}, \ref{t:main-res-map-alg_infty} and \ref{t:main-res-map-alg_exp}. These are described in Table \ref{tab:the-mappings}.
Note that these are indeed well-defined mappings, since the minimizer of \R{wsr-LASSO-coeff} with smallest $\ell^{2,2}$-norm is unique (this follows from the facts that \R{wsr-LASSO-coeff} is a convex problem, therefore its set of minimizers is a convex set, and the function $\bm{Z} \mapsto \nm{\bm{Z}}^2_{2,2}$ is strongly convex). This particular choice is arbitrary, and is made solely so as to have a well-defined mapping. It is of no consequence whatsoever. Indeed, the various error bounds we prove later hold for any minimizer of \R{wsr-LASSO-coeff}.

\begin{table}
\benbox{
\begin{itemize}
\item Let $m$, $\epsilon$ and $n$ be as given in the particular theorem and set $\Lambda = \Lambda^{\mathsf{HC}}_{n,d}$ (Theorem \ref{t:main-res-map-alg} and \ref{t:main-res-map-alg_exp}) or $\Lambda = \Lambda^{\mathsf{HCI}}_{n}$ (Theorem \ref{t:main-res-map-alg_infty}).

\item Set $\lambda = (4 \sqrt{m/L})^{-1}$, where $L = L(m,d,\epsilon)$ is as in \R{Ldef}.

\item Let $\bm{D} = (d_{ik})^{m,K}_{i,k=1} \in \bbC^{m \times K}$ and $\bm{Y} = (\bm{y}_i)^{m}_{i=1}$ be an input, as in \R{data_we_get}, and set $\bm{B} = \frac{1}{\sqrt{m}} \bm{D}$.

\item Let $\bm{G}$, $\bm{A}$ and $\bm{w}$ be as in \R{Gram-matrix-def}, \R{def-measMatrix} and \R{weights_def}, respectively.

\item Define the output $\widehat{\bm{C}} = \cM(\bm{Y},\bm{D})$ as the minimizer of \R{wsr-LASSO-coeff} with smallest $\ell^{2,2}$-norm.
\end{itemize}
}
\vspace{-1pc}

\caption{The mappings $\cM : \cU^m \times \bbC^{m \times K} \rightarrow \bbC^{N \times K}$ used in Theorems \ref{t:main-res-map-alg}, \ref{t:main-res-map-alg_infty} and \ref{t:main-res-map-alg_exp}}
\label{tab:the-mappings}
\end{table}

\subsection{The primal-dual iteration}

To derive the algorithms described in the other main theorems, we need methods for approximately solving the optimization problems \R{wsr-LASSO} and \R{wsr-LASSO-coeff}. We use the \textit{primal-dual iteration} \cite{ChambollePock2011} (also known as the Chambolle--Pock algorithm) to this end.
We first briefly describe the primal-dual iteration in the general case (see \cite{ChambollePock2011,ChambolleEtAl2016,ChamPockAN}, as well as \cite[\S 7.5]{adcock2021compressive}) for more detailed treatments), before specializing to the weighted SR-LASSO problem in the next subsection.

Let $(\cX, \ip{\cdot}{\cdot}_{\cX})$ and $(\cY,\ip{\cdot}{\cdot}_{\cY})$ be (complex) Hilbert spaces, $g: \cX \rightarrow \bbR\cup\lbrace \infty \rbrace$, $h: \cY \rightarrow \bbR\cup\lbrace \infty \rbrace$ be proper, lower semicontinuous and convex functions and $ A \in  \cB(\cX,\cY)$ be a bounded linear operator satisfying $\mathrm{dom}(h) \cap  A (\mathrm{dom}(g))\neq \emptyset$. The primal-dual iteration is a general method for solving the convex optimization problem
\begin{equation}\label{primal1}
\min_{x\in \cX} \left \{ g(x) + h(A(x)) \right \}.
\end{equation}
\GR{}
Under this setting the (Fenchel--Rockafeller) dual problem is
\begin{equation}\label{dual1}
\min_{\xi \in \cY} \left \{  g^*{A^*(\xi))} + h^*(-\xi)  \right \},
\end{equation}
where $g^*$ and $h^*$ are the convex conjugate functions of $g$ and $h$, respectively. Recall that, for a function $f: \cX \rightarrow \bbR \cup \{\infty\}$, its convex conjugate is defined by
\begin{equation}\label{conjugate}
f^*(z) = \sup_{x \in \cX} \left( \Re\ip{x}{z}_{\cV}-f(x)\right),\quad z \in \cX.
\end{equation}
The Lagrangian of \R{primal1} is defined by
\begin{equation}\label{Lag1}
\cL (x,\xi)= g(x)+  \Re\ip{A(x)}{\xi}_{\cY}-h^*(\xi),\qquad x \in \mathrm{dom}(g),\ \xi \in \mathrm{dom}(h^*),
\end{equation}
and $\cL(x,\xi)=\infty$ if $x\not\in \mathrm{dom}(g)$ or $\cL(x,\xi)=-\infty$ if $\xi \not\in \mathrm{dom}(h^*)$. This in turn leads to the saddle-point formulation of the problem
\bes{
\min_{x \in \cX} \max_{\xi \in \cY} \cL(x,\xi).
}
The primal-dual iteration seeks a solution $(\hat{x},\hat{\xi})$ of the saddle-point problem by solving the following fixed-point equation
\begin{equation}\label{GeneralPrimal-Dual}
\begin{split}
\hat{x} &= \mathrm{prox}_{\tau g}(\hat{x}-\tau A^*(\hat{\xi})),
\\
\hat{\xi} &= \mathrm{prox}_{\sigma h^*}(\hat{\xi}+\sigma A(\hat{x}) ),
\end{split}
\end{equation}
where $ \tau,\sigma>0$ are stepsize parameters and  $\mathrm{prox}$ is the proximal operator, which is defined by
\bes{
\mathrm{prox}_{f}(z)= \arg\min_{x \in \cX} \left \{ f(x)+\dfrac{1}{2}\nm{x-z}_{\cX}^2 \right \}, \qquad z \in \mathrm{dom}(f). 
}
To be precise, given initial values $(x^{(0)},\xi^{(0)}) \in \cX \times \cY$ the primal-dual iteration defines a sequence $\{ (x^{(n)},\xi^{(n)}) \}^{\infty}_{n=1} \subset \cX \times \cY$ as follows:
\be{
\label{PDI}
\begin{split}
x^{(n+1)} &= \mathrm{prox}_{\tau g}(x^{(n)} - \tau A^*(\xi^{(n)})),
\\
\xi^{(n+1)} &= \mathrm{prox}_{\sigma h^*} (\xi^{(n)} + \sigma A (2 x^{(n+1)}-x^{(n)})).
\end{split}
}

\subsection{The primal-dual iteration for the weighted SR-LASSO problem}\label{Sec:PDI-A}

 We now apply this scheme to \eqref{wsr-LASSO} and \R{wsr-LASSO-coeff}. We first describe an algorithm to approximately solve the Hilbert-valued problem \R{wsr-LASSO}, before using the equivalence between elements of $\cV^N_h$ and $\bbC^{N \times K}$ to obtain an algorithm for approximately solving \R{wsr-LASSO-coeff}. 

Consider \R{wsr-LASSO}. We define $\cX = (\cV^N_h,\ip{\cdot}{\cdot}_{2;\cV})$, $\cY = (\cV^m_h,\ip{\cdot}{\cdot}_{2;\cV})$ and $g: \cX \rightarrow \bbR\cup\lbrace \infty \rbrace$, $h: \cY \rightarrow \bbR\cup\lbrace \infty \rbrace$ as  the  proper, lower semicontinuous and convex functions 
\bes{
g(\bm{x}) = \lambda \nmu{\bm{x}}_{1,\w;\cV},\qquad h(\bm{y}) = \nmu{\bm{y} - \bm{b}}_{2;\cV},\qquad \x \in \cV^N_h,\ \y \in \cV^m_h.
} 
We first find the proximal maps of $g$ and $h^*$.
Using \eqref{conjugate}, we see that
\begin{equation*}
\begin{split}
h^*(\bm{\xi})  = \sup_{\v \in \cV^m_h} \left( \Re\ip{\v}{\bxi}_{\cV}-\nmu{\v - \bm{b}}_{2;\cV}\right)
=  \Re\ip{\bm{b}}{\bm{\xi}}_{\cV} +\sup_{\v \in \cV^m_h} \left( \Re\ip{\v}{\bxi}_{\cV}-\nmu{\v}_{2;\cV}\right), \quad \forall \bxi \in \cV^m_h.\\
\end{split}
\end{equation*}
From \cite[Ex. 13.3 \& 13.4]{BauschkeCombettes2017} it follows that
\begin{equation*}
(\nm{\cdot}_{\cV})^* = \delta_{B},\qquad B: = \{ \bm{\xi} \in \cV^m_h : \nm{\bm{\xi}}_{2;\cV} \leq 1 \},
\end{equation*}
where $\delta_{B}$ is the indicator function of the set $B$, taking value $\delta_{B}(\bm{\xi}) = 0$ when $\bm{\xi} \in B$ and $+ \infty$ otherwise.
Hence
\begin{equation}\label{eq-fstar}
h^*(\bm{\xi}) = \Re\ip{\bm{b}}{\bm{\xi}}_{\cV} + \delta_{B}(\bm{\xi}).
\end{equation}
Using this, we obtain  
\begin{equation*}
\begin{split}
\mathrm{prox}_{\sigma h^*}(\bm{\xi}) 
&= \arg\min_{\bm{z} \in \cV^m_h} \left\lbrace \sigma \delta_{B}(\bm{\z})+ \sigma \Re\ip{\bm{b}}{\bm{\z}}_{\cV} + \dfrac{1}{2} \nm{\z-\bm{\xi}}_{2;\cV}^2\right\rbrace \\
&= \arg\min_{\bm{z}: \nm{\z}_{2;\cV}\leq 1 } \left\lbrace \dfrac{1}{2} \nm{\z-(\bm{\xi}-\sigma\b)}_{2;\cV}^2\right\rbrace \\
&= \mathrm{proj}_{B}(\bm{\xi} - \sigma \bm{b}),\\ 
\end{split}
\end{equation*}
where $\mathrm{proj}_{B}$ is the projection onto $B$, which is given explicitly by
\begin{equation*}
\mathrm{proj}_{B}(\bm{\xi}) = \min \left \{ 1 , \frac{1}{\nm{\bm{\xi}}_{2;\cV} } \right \} \bm{\xi}.
\end{equation*}
On the other hand, applying the definition of the proximal operator to the function $\tau g$ with parameter  $\tau>0$, we deduce that
\bes{
\left ( \mathrm{prox}_{\tau g}(\bm{x}) \right )_{i} = \mathrm{prox}_{\tau w_i \lambda \nm{\cdot}_{\cV}}(x_i),\ i = 1,\ldots,N,\qquad \bm{x} = (x_i)^{N}_{i=1} \in \cV^N_h.
}
Moreover, a simple adaptation of \cite[Ex.\ 14.5]{BauschkeCombettes2017} with the $\nm{\cdot}_{\cV}$-norm gives
\begin{equation*}
 \mathrm{prox}_{\tau \nm{\cdot}_{\cV}}(x) = \max \lbrace \nm{x}_{\cV}-\tau, 0 \rbrace \dfrac{x}{\nm{x}_{\cV}},\qquad \forall x\in \cV_h\setminus \lbrace 0\rbrace.
\end{equation*}
Hence, 
\begin{equation*}
 \mathrm{prox}_{\tau g}(\bm{x})  = \left( \max \{ \nm{x_i}_{\cV} - \tau\lambda w_i , 0 \} \frac{x_i}{\nm{x_i}_{\cV}} \right)_{i=1}^N,\qquad \bm{x} = (x_i)^{N}_{i=1} \in \cV^N_h \setminus \lbrace \0\rbrace.
\end{equation*}
With this in hand, we are now ready to define the primal-dual iteration for \R{wsr-LASSO}. As we see later, the analysis of convergence for the primal-dual iteration is given in terms of the \textit{ergodic} sequence
\bes{
\bar{\bm{c}}^{(n)} = \frac1n \sum^{n}_{i=1} \bm{c}^{(i)},\qquad n = 1,2,\ldots,
}
where $\bm{c}^{(i)} \in \cV^N_h$ is the primal variable obtained at the $i$th step of the iteration.
Hence, we now include the computation of these sequences in the primal-dual iteration for the weighted SR-LASSO problem \R{wsr-LASSO}, and take this as the output. The resulting procedure is described in Algorithm  \ref{a:primal-dual-wSRLASSO}.

\begin{algorithm}[t]
\caption{\texttt{primal-dual-wSRLASSO} -- the primal-dual iteration for the weighted SR-LASSO problem \eqref{wsr-LASSO}}
\label{a:primal-dual-wSRLASSO}
\SetKwInOut{input}{inputs}
\SetKwInOut{initialize}{initialize}
\SetKwInOut{output}{output}
\input{measurement matrix $\bm{A} \in \bbC^{m \times N}$, measurements $\bm{b} \in \cV^N_h$, positive weights $\bm{w} = (w_i)^{N}_{i=1}$, parameter $\lambda>0$, stepsizes $\tau, \sigma>0$, maximum number of iterations $T \geq 1$, initial values $\bm{c}^{(0)} \in \cV^N_h$, $\bm{\xi}^{(0)} \in \cV^m_h$}
\output{$\bar{\bm{c}} = \text{\texttt{primal-dual-wSRLASSO}}(\bm{A},\bm{b},\bm{w},\lambda,\tau,\sigma,T,\bm{c}^{(0)}, \bm{\xi}^{(0)})$, an approximate minimizer of \eqref{wsr-LASSO}}
\initialize{$\bar{\bm{c}}^{(0)} = \bm{0} \in \cV^N_h$ }
\BlankLine

\For{$n = 0,1,\ldots T-1$}{
$ \bm{p} = (p_i)^{N}_{j=1}= \bm{c}^{(n)}-\tau \bm{A}^*\bm{\xi}^{(n)}$
\\
$\bm{c}^{(n+1)}  =\left( \max \{ \nm{p_{i}}_{\cV} - \tau\lambda w_i , 0 \} \frac{p_{i}}{\nm{p_{i}}_{\cV}} \right)_{i=1}^N$
\\
$ \bm{q}  =\bm{\xi}^{(n)} + \sigma \bm{A} (2 \bm{c}^{(n+1)}-\bm{c}^{(n)})-\sigma \b$
\\
$ \bm{\xi}^{(n+1)} = \min \left \{ 1, \frac{1}{\nm{\bm{q}}_{2;\cV}} \right \} \bm{q}$
\\
$\bar{\bm{c}}^{(n+1)} =  \frac{n}{n+1} \bar{\bm{c}}^{(n)} + \frac{1}{n+1} \bm{c}^{(n+1)} $
}
$\bar{\bm{c}} = \bar{\bm{c}}^{(T)}$
\end{algorithm}

Having done this, we next adapt Algorithm \ref{a:primal-dual-wSRLASSO} in the way mentioned previously to obtain an algorithm for \R{wsr-LASSO-coeff}. This is given in Algorithm \ref{a:primal-dual-wSRLASSO-coeff}.

\begin{algorithm}[t]
\caption{\texttt{primal-dual-wSRLASSO-C} -- the primal-dual iteration for the weighted SR-LASSO problem \eqref{wsr-LASSO-coeff}}
\label{a:primal-dual-wSRLASSO-coeff}
\SetKwInOut{input}{inputs}
\SetKwInOut{initialize}{initialize}
\SetKwInOut{output}{output}
\input{measurement matrix $\bm{A} \in \bbC^{m \times N}$, measurements $\bm{B} \in \bbC^{m \times K}$, positive weights $\bm{w} = (w_i)^{N}_{i=1}$, Gram matrix $\bm{G} \in \bbC^{K \times K}$, parameter $\lambda>0$, stepsizes $\tau, \sigma>0$, maximum number of iterations $T \geq 1$, initial values $\bm{C}^{(0)} \in \bbC^{N \times K}$, $\bm{\Xi}^{(0)} \in \bbC^{m \times K}$}
\output{$\widebar{\bm{C}} = \text{\texttt{primal-dual-wSRLASSO-C}}(\bm{A},\bm{b},\bm{w},\bm{G},\lambda,\tau,\sigma,T,\bm{C}^{(0)}, \bm{\Xi}^{(0)})$, an approximate minimizer of \eqref{wsr-LASSO-coeff}}
\initialize{$\widebar{\bm{C}}^{(0)} = \bm{0} \in \bbC^{N \times K}$ }
\BlankLine
\For{$n = 0,1,\ldots T-1$}{
$ \bm{P} = (p_{ik})^{N,K}_{j,k=1}= \bm{C}^{(n)}-\tau \bm{A}^*\bm{\Xi}^{(n)}$
\\
\For{$i = 1,\ldots,N$}{
$\bm{p}_i = (p_{ik})^{K}_{k=1}$
\\
$(c^{(n+1)}_{ik})^{K}_{k=1} =  \max \{ \nmu{\bm{G}^{1/2} \bm{p}_{i}}_2 - \tau\lambda w_i , 0 \} \frac{\bm{p}_{i}}{\nmu{\bm{G}^{1/2} \bm{p}_{i}}_2}$
}
$\bm{C}^{(n+1)} = (c^{(n+1)}_{ik})^{N,K}_{i,k=1}$
\\
$ \bm{Q}  =\bm{\Xi}^{(n)} + \sigma \bm{A} (2 \bm{C}^{(n+1)}-\bm{C}^{(n)})-\sigma \bm{B}$
\\
$ \bm{\Xi}^{(n+1)} = \min \left \{ 1, \frac{1}{\nm{\bm{Q} \bm{G}^{1/2}}_{2,2}} \right \} \bm{Q}$
\\
$\widebar{\bm{C}}^{(n+1)} =  \frac{n}{n+1} \widebar{\bm{C}}^{(n)} + \frac{1}{n+1} \bm{C}^{(n+1)} $
}
$\widebar{\bm{C}} = \widebar{\bm{C}}^{(T)}$
\end{algorithm}

\rem{
Note that even though the square-root matrix $\bm{G}^{1/2}$ is used in Algorithm \ref{a:primal-dual-wSRLASSO-coeff}, this matrix does not need to be computed. Indeed,
\bes{
\nmu{\bm{G}^{1/2} \bm{d}}_2 = \sqrt{\bm{d}^* \bm{G} \bm{d}},\quad \bm{d} \in \bbC^K,
}
and for a matrix $\bm{C} \in \bbC^{N \times K}$, we have
\bes{
\nmu{\bm{C} \bm{G}^{1/2}}_{2,2} = \sqrt{\sum^{N}_{i=1} \nm{\bm{G}^{1/2} \bm{c}_i}^2_2} = \sqrt{\sum^{N}_{i=1} \bm{c}^*_i \bm{G} \bm{c}_i },
}
where $\bm{c}_i \in \bbC^K$ is the $i$th row of $\bm{C}$. In particular, computing $\nmu{\bm{G}^{1/2} \bm{d}} $ involves $c(F(\bm{G}) + K)$ arithmetic operations, and computing $\nmu{\bm{C} \bm{G}^{1/2}}_{2,2} $ involves $c m (F(\bm{G}) + K)$ arithmetic operations, for some universal constant $c>0$.
}

To conclude this section, we now state and prove a lemma on the computational cost of Algorithm \ref{a:primal-dual-wSRLASSO-coeff}. This will be used later when proving the main theorems:

\lem{
[Computational cost of Algorithm \ref{a:primal-dual-wSRLASSO-coeff}] 
\label{l:comp-cost-PDI}
The computational cost of Algorithm \ref{a:primal-dual-wSRLASSO-coeff} is bounded by 
\bes{
c \cdot \left ( m \cdot N \cdot K + (m+N) \cdot (F(\bm{G})+K) \right ) \cdot T,
}
where $c> 0$ is a universal constant.
}
\prf{
We proceed line-by-line. Line $2$ involves a matrix-matrix multiplication and matrix subtraction, for a total of at most
\bes{
c \cdot m \cdot N \cdot K\qquad \mbox{(line 2)}
}
arithmetic operations for some universal constant $c$. Now consider lines 3--5. By the previous remark, we may calculate $\nmu{\bm{G}^{1/2} \bm{p}_i}_{2} = \sqrt{\bm{p}^*_i \bm{G} \bm{p}_i}$ using one multiplication with the matrix $\bm{G}$, one inner product of vectors of length $K$ and one square root (recall from Definition \ref{d:AlgPolyApprox} that we count square roots as arithmetic operations). This involves at most $c \cdot(F(\bm{G}) + K)$ arithmetic operations. Hence the cost of line 5 is at most
\bes{
c \cdot (F(\bm{G}) + K)\qquad \mbox{(line 5)},
}
for a possibly different universal constant $c$. Therefore, the total cost of lines 3--5 is
\bes{
c \cdot (F(\bm{G}) + K) \cdot N \qquad \mbox{(lines 3--5)}.
}
Line 7 involves no arithmetic operations and line 8 involves at most
\bes{
c \cdot m \cdot N \cdot K\qquad \mbox{(line 8)}
}
operations. Consider line 9. Due to the previous remark, the computation of $\nmu{\bm{Q} \bm{G}^{1/2}}_{2,2}$ can be performed in at most $c \cdot m \cdot (F(\bm{G}) + K)$ operations (since $\bm{Q}$ is of size $m \times K$). Hence line 9 involves at most
\bes{
c \cdot m \cdot(F(\bm{G}) + K)\qquad \mbox{(line 9)}
}
operations. Finally, line 10 involves at most
\bes{
c \cdot N \cdot K\qquad \mbox{(line 10)}
}
operations. After simplifying, we deduce that lines 2--10 involve at most
\bes{
c  \cdot\left ( m  \cdot N \cdot K + (K+F(\bm{G}) ) \cdot (N+m) \right )
}
operations. The result now follows by multiplying this by the number of iterations $T$.
}

\subsection{The algorithms in Theorems \ref{t:main-res-algo-alg}, \ref{t:main-res-algo-alg_infty} and \ref{t:main-res-algo-alg_exp}}\label{ss:the-algorithms}

We are now almost ready to specify the algorithms used in Theorems \ref{t:main-res-algo-alg}, \ref{t:main-res-algo-alg_infty} and \ref{t:main-res-algo-alg_exp}. 
Notice that Algorithms \ref{a:primal-dual-wSRLASSO} and \ref{a:primal-dual-wSRLASSO-coeff} require the measurement matrix $\bm{A}$ as an input. Hence, we first describe the computation of this matrix for Chebyshev and Legendre polynomials. This is summarized in Algorithm \ref{a:construct-A}. Notice that line 5 of this algorithm involves evaluating the first $k$ one-dimensional Chebyshev or Legendre polynomials. This can be done efficiently via the three-term recurrence relation, as explained in the proof of the following result:

\begin{algorithm}[t]
\caption{\texttt{construct-A} -- constructing the measurement matrix \R{def-measMatrix}}
\label{a:construct-A}
\SetKwInOut{input}{inputs}
\SetKwInOut{initialize}{initialize}
\SetKwInOut{output}{output}
\input{sample points $\bm{y}_1,\ldots,\bm{y}_m \in \cU^d$, finite index set $\Lambda = \{ \bm{\nu}_1,\ldots,\bm{\nu}_N \} \subset \cF$}
\output{$\bm{A} = \texttt{construct-A}((\bm{y}_i)^{m}_{i=1},\Lambda) \in \bbC^{m \times N}$, the measurement matrix \R{def-measMatrix}}
\initialize{$\widebar{\bm{C}}^{(0)} = \bm{0} \in \bbC^{N \times K}$ }
\BlankLine
$k = \max \{ j : (\bm{\nu}_i)_j \neq 0,\ i = 1,\ldots,N,\ j = 1,\ldots,d \}$
\\ 
$n = \max \{ (\bm{\nu}_i)_j : i = 1,\ldots,N,\ j = 1,\ldots,n \}$
\\
\For{$i = 1,\ldots ,m$}{
Set $\bm{z} = (z_j)^{k}_{j=1} = ((\bm{y}_i)_j)^{k}_{j=1}$
\\
$b_{ij} = \Psi_j(z_i)$, $i = 1,\ldots,k$, $j = 0,\ldots,n$, 
\\
\For{$j = 1,\ldots,N$}{
$a_{ij} = \prod^{n}_{l=1} b_{l,(\bm{\nu}_j)_l }$
}
}
$\bm{A} = \frac{1}{\sqrt{m}} ( a_{ij} )^{m,N}_{i,j=1}$
\end{algorithm}

\lem{[Computational cost of Algorithm \ref{a:construct-A}]
\label{l:comp-cost-A}
The computational cost of Algorithm \ref{a:construct-A} is bounded by
\bes{
c \cdot m \cdot (n +N ) \cdot k,
}
where $c > 0$ is a universal constant and $k$ and $n$ are as in lines 1 and 2 of the algorithm.
}
\prf{
Consider line 5 of the algorithm. Evaluation of the first $k+1$ Chebyshev or Legendre polynomials can be done via the three-term recurrence relation. In the Chebyshev case, this is
\bes{
\Psi_0(z) = 1,\quad \Psi_1(z) = \sqrt{2} z,\qquad \Psi_{j+1}(z) = 2 z \Psi_j(z) - c_j \Psi_{j-1}(z),\quad j = 1,\ldots,k,
}
where $c_j = 1$ if $j \geq 1$ and $1/\sqrt{2}$ otherwise, and in the Legendre case, it is
\eas{
\Psi_0(z) &= 1,\quad \Psi_1(z) = \sqrt{3} z,
\\
 \Psi_{j+1}(z) & = \frac{\sqrt{j+3/2}}{j+1}\left(\frac{2j+1}{\sqrt{j+1/2}} z \Psi_j(z) - \frac{j}{\sqrt{j-1/2}} \Psi_{j-1}(z) \right ),\quad j = 2,\ldots,k,
}
(recall that these polynomials are normalized with respect to their respective probability measures). Hence the computational cost for line 5 is bounded by $c \cdot n \cdot k$. The computational cost for lines 6--8 is precisely $N \cdot (k-1)$. Hence, the computational cost for forming each row of $\bm{A}$ is bounded by $c \cdot (n \cdot k + N \cdot k )$. The result now follows.
}

With this in hand, we are now ready to specify the algorithms used in Theorem \ref{t:main-res-algo-alg}, Theorem \ref{t:main-res-algo-alg_infty} and \ref{t:main-res-algo-alg_exp}. These are given in Table \ref{tab:the-algorithms}.

\begin{table}
\benbox{
\begin{itemize}
\item Let $m$, $\epsilon$, $n$ and $t$ be as given in the particular theorem and set:
\bull{
\item $\Lambda = \Lambda^{\mathsf{HC}}_{n,d}$ (Theorems \ref{t:main-res-map-alg} and \ref{t:main-res-map-alg_exp}) or $\Lambda = \Lambda^{\mathsf{HCI}}_{n}$ (Theorem \ref{t:main-res-map-alg_infty}),
\item $\lambda = (4 \sqrt{m/L})^{-1}$, where $L = L(m,d,\epsilon)$ is as in \R{Ldef},
\item $\tau = \sigma =(\Theta(n,d))^{-\alpha}$, where $\Theta(n,d)$ and $\alpha$ are as in \R{Thetadef} and \R{main_alpha_def}, respectively,
\item $T = \lceil 2 (\Theta(n,d))^{\alpha} t \rceil$.
}
\item Let $\bm{D} = (d_{ik})^{m,K}_{i,k=1} \in \bbC^{m \times K}$ and $\bm{Y} = (\bm{y}_i)^{m}_{i=1}$ be an input, as in \R{data_we_get}, and set $\bm{B} = \frac{1}{\sqrt{m}} \bm{D}$.

\item Compute $\bm{A} = \texttt{construct-A}(\bm{Y},\Lambda)$.

\item Let $\bm{G}$ and $\bm{w}$ be as in \R{Gram-matrix-def} and \R{weights_def}, respectively.

\item Define the output $\widebar{\bm{C}} = \cA(\bm{D})$, where
\bes{
\cA(\bm{D}) = \text{\texttt{primal-dual-wSRLASSO-C}}\left (\bm{A},\bm{B},\bm{w},\bm{G},\lambda,\tau,\sigma,T,\bm{0},\bm{0} \right )
}
\end{itemize} 
}
\vspace{-1pc}

\caption{The algorithms $\cA : \cU^m \times \bbC^{m \times K } \rightarrow \bbC^{N \times K}$ used in Theorem \ref{t:main-res-algo-alg}, Theorem \ref{t:main-res-algo-alg_infty} and \ref{t:main-res-algo-alg_exp}. }
\label{tab:the-algorithms}
\end{table}

\subsection{An efficient restarting procedure for the primal-dual iteration and the algorithms used in Theorems \ref{t:main-res-effic-algo-alg}, \ref{t:main-res-effic-algo-alg_infty} and \ref{t:main-res-effic-algo-alg_exp}}\label{Section_restart}

While the primal-dual iteration converges under very general conditions, it typically does so very slowly, with the error in the objective function decreasing like $\ord{1/n}$, where $n$ is the iteration number. To obtain exponential convergence (down to some controlled tolerance) we employ a restarting procedure. This is based on recent work of \cite{colbrook2021can,colbrook2021warpd}.

Restarting is a general concept in optimization, where the output of an algorithm after a fixed number of steps is then fed into the algorithm as input, after suitably scaling the parameters of the algorithm \cite{renegar2022simple,roulet2020sharpness,roulet2020computational}. In the case of the primal-dual iteration for the weighted SR-LASSO problem, this procedure involves three hyperparameters: a \textit{tolerance} $\zeta' > 0$ and \textit{scale} parameters $0 < r < 1$ and $s > 0$. After applying one step of the primal-dual iteration (Algorithm \ref{a:primal-dual-wSRLASSO} or \ref{a:primal-dual-wSRLASSO-coeff}) yielding an output $\bm{c}^{(1)}$, it then scales this vector and the right-hand side vector $\bm{b}$ by an exponentially-decaying factor $a_l$ (defined in terms of $\zeta'$, $r$ and $s$), before feeding in these values into the primal-dual iteration as input.

We explain the motivations behind the specific form of the restart procedure for the primal-dual iteration later in \S \ref{ss:restart-scheme}. For now, we simply state the procedures in the case of the weighted SR-LASSO problems \R{wsr-LASSO} and \R{wsr-LASSO-coeff}. These are given in Algorithms \ref{a:primal-dual-wSRLASSO-restart} and \ref{a:primal-dual-wSRLASSO-coeff-restart}, respectively. With these in hand, we can also give the algorithms used in  Theorems \ref{t:main-res-effic-algo-alg}, \ref{t:main-res-effic-algo-alg_infty} and \ref{t:main-res-effic-algo-alg_exp}. See Table \ref{tab:the-effic-algorithms}.

\begin{algorithm}[t]
\caption{\texttt{primal-dual-rst-wSRLASSO} -- the restarted primal-dual iteration for the weighted SR-LASSO problem \eqref{wsr-LASSO}}
\label{a:primal-dual-wSRLASSO-restart}
\SetKwInOut{input}{inputs}
\SetKwInOut{initialize}{initialize}
\SetKwInOut{output}{output}
\input{measurement matrix $\bm{A} \in \bbC^{m \times N}$, measurements $\bm{b} \in \cV^N_h$, positive weights $\bm{w} = (w_i)^{N}_{i=1}$, parameter $\lambda>0$, stepsizes $\tau, \sigma>0$, number of primal-dual iterations $T \geq 1$, number of restarts $R \geq 1$, tolerance $\zeta' > 0$, scale parameter $0 < r < 1$, constant $s > 0$, initial values $\bm{c}^{(0)}= \0 \in \cV^N_h$ $\bm{\xi}^{(0)}=\0 \in \cV^m_h$.}
\output{$\tilde{\bm{c}} = \text{\texttt{primal-dual-rst-wSRLASSO}}(\bm{A},\bm{b},\bm{w},\lambda,\tau,\sigma,T,R,\zeta',r,s)$, an approximate minimizer of \eqref{wsr-LASSO}}
\initialize{$\bar{\bm{c}}^{(0)} = \bm{0} \in \cV^N_h$  , $\varepsilon_0 = \nm{\bm{b}}_{2;\cV}$ }
\BlankLine

\For{$l = 0,\ldots,R-1$}{
$\varepsilon_{l+1} = r (\varepsilon_l + \zeta')$
\\
$a_l = s \varepsilon_{l+1}$
\\
$\tilde{\bm{c}}^{(l+1)} = a_l \cdot \text{\texttt{primal-dual-wSRLASSO}}(\bm{A},\bm{b}/a_l,\bm{w},\lambda,\tau,\sigma,T,\tilde{\bm{c}}^{(l)}/a_l, \bm{0})$ 
}
$\tilde{\bm{c}} = \tilde{\bm{c}}^{(R)}$ 
\end{algorithm}

\begin{algorithm}[t]
\caption{\texttt{primal-dual-rst-wSRLASSO-C} -- the restarted primal-dual iteration for the weighted SR-LASSO problem \eqref{wsr-LASSO-coeff}}
\label{a:primal-dual-wSRLASSO-coeff-restart}
\SetKwInOut{input}{inputs}
\SetKwInOut{initialize}{initialize}
\SetKwInOut{output}{output}
\input{measurement matrix $\bm{A} \in \bbC^{m \times N}$, measurements $\bm{B} \in \bbC^{N \times K}$, positive weights $\bm{w} = (w_i)^{N}_{i=1}$, Gram matrix $\bm{G} \in \bbC^{K \times K}$, parameter $\lambda>0$, stepsizes $\tau, \sigma>0$, number of primal-dual iterations $T \geq 1$, number of restarts $R \geq 1$, tolerance $\zeta' > 0$, scale parameter $0 < r < 1$, constant $s > 0$, initial values $\bm{C}^{(0)}= \0 \in \bbC^{N \times K}$, $\bm{\Xi}^{(0)}= \0 \in \bbC^{m \times K}$}
\output{$\widetilde{\bm{C}} = \text{\texttt{primal-dual-rst-wSRLASSO-C}}(\bm{A},\bm{b},\bm{w},\bm{G},\lambda,\tau,\sigma,T,R,\zeta',r,s)$, an approximate minimizer of \eqref{wsr-LASSO-coeff}}
\initialize{$\widetilde{\bm{C}}^{(0)} = \bm{0} \in \bbC^{N\times K}$, $\varepsilon_0 = \nmu{\bm{B}\bm{G}^{1/2}}_{2;2}$ }
\BlankLine

\For{$l = 0,\ldots,R-1$}{
$\varepsilon_{l+1} = r (\varepsilon_l + \zeta)$
\\
$a_l = s \varepsilon_{l+1}$
\\
$\widetilde{\bm{C}}^{(l+1)} = a_l \cdot \text{\texttt{primal-dual-wSRLASSO-C}}(\bm{A},\bm{B}/a_l,\bm{w},\bm{G},\lambda,\tau,\sigma,T,\widetilde{\bm{C}}^{(l)}/a_l, \bm{0})$
}
$\widetilde{\bm{C}} = \widetilde{\bm{C}}^{(R)}$
\end{algorithm}

\begin{table}
\benbox{
\begin{itemize}
\item Let $m$, $\epsilon$, $n$, $t$ and $\zeta'$ be as given in the particular theorem and set:
\bull{
\item $\Lambda = \Lambda^{\mathsf{HC}}_{n,d}$ (Theorems \ref{t:main-res-effic-algo-alg} and \ref{t:main-res-effic-algo-alg_exp}) or $\Lambda = \Lambda^{\mathsf{HCI}}_{n}$ (Theorem \ref{t:main-res-effic-algo-alg_infty}),
\item $\lambda = (4 \sqrt{m/L})^{-1}$, where $L = L(m,d,\epsilon)$ is as in \R{Ldef},
\item $\tau = \sigma = (\Theta(n,d))^{-\alpha}$, where $\Theta(n,d)$ and $\alpha$ are as in \R{Thetadef} and \R{main_alpha_def}, respectively,
\item $T= \lceil (\Theta(n,d))^{\alpha} c^{\star} \rceil $, where $c^{\star}$ is a universal constant,
\item $R = t$
\item $r = \E^{-1}$
\item $s = \frac{(\Theta(n,d))^{\alpha} T}{2}$
}
\item Let $\bm{D} = (d_{ik})^{m,K}_{i,k=1} \in \bbC^{m \times K}$ and $\bm{Y} = (\bm{y}_i)^{m}_{i=1}$ be an input, as in \R{data_we_get},  and set $\bm{B} = \frac{1}{\sqrt{m}} \bm{D}$.

\item Compute $\bm{A} = \texttt{construct-A}(\bm{Y},\Lambda)$.
\item Let $\bm{G}$, $\bm{A}$ and $\bm{w}$ be as in \R{def-measMatrix}, \R{Gram-matrix-def} and \R{weights_def}, respectively.

\item Define the output $\widetilde{\bm{C}} = \cA(\bm{D})$, where
\bes{
  \cA(\bm{D}) =   \text{\texttt{primal-dual-rst-wSRLASSO-C}}(\bm{A},\bm{B},\bm{w},\bm{G},\lambda,\tau,\sigma,T,R,\zeta,r,c)
}
\end{itemize}
}
\vspace{-1pc}

\caption{The algorithms $\cA : \cU^m \times \bbC^{m \times K } \rightarrow \bbC^{N \times K}$ used in Theorems \ref{t:main-res-effic-algo-alg}, \ref{t:main-res-effic-algo-alg_infty} and \ref{t:main-res-effic-algo-alg_exp}.}
\label{tab:the-effic-algorithms}
\end{table}

Note that these algorithms involve a number $c^{\star}$, which is a universal constant. It is possible to provide a precise numerical value of this constant by carefully tracking the constants in several of the proof steps. Since doing so is not especially illuminative, we forgo this additional effort. Instead, we now give a little more detail on this constant:

\rem{
\label{cstar-no}
From \R{cstar-def} we see that $c^{\star} = 3296 \sqrt{c_0}$, where $c_0$ is the universal constant that arises in \R{zeta-alg-def}. As shown in the proof of Theorem \ref{t:main-res-map-alg_inex}, the constant $c_0$ needs to be chosen sufficiently large so that the measurement matrix $\bm{A}$ satisfies the so-called \textit{weighted RIP}. In particular, it is related to the universal constant $c > 0$ defined in Lemma \ref{l:LegMat_RIP}. See, in particular, \R{m-cond-for-wRIP}. A numerical value for this constant can indeed be found using results shown in \cite{chkifa2018polynomial}. With this in hand, one can then keep track of the constant $c_0$ in the proof of Theorem \ref{t:main-res-map-alg_inex} to find its numerical value. This discussion also highlights why tracking the value of $c^{\star}$ is non particularly illuminative. Indeed, it is well-known that universal constants appearing in RIP estimates in compressed sensing are generally very pessimistic \cite{adcock2021sparse,foucart2013mathematical,adcock2021compressive}.
}

\section{Numerical experiments}\label{s:num-exp}

\subsection{Experimental setup}

We first describe the experimental setup.

\subsubsection{Hyperparameter values}\label{ss:hyperparam-values}

\begin{table}
\begin{center}
\begin{tabular}{c|cc}
Parameter & Value & Notes
\\
\hline
$\lambda$ & $( \sqrt{25 m})^{-1}$ & Based on  \cite[App.\ A]{adcock2021sparse}
\\
$\sigma$ & $\nm{\bm{A}}^{-1}_2$ & Based on Lemma \ref{lemma-subopti}
\\
$\tau$ & $\nm{\bm{A}}^{-1}_2$ & Based on Lemma \ref{lemma-subopti}
\\
$r$ & $\E^{-1}$ & Based on Theorem \ref{thm:restart-scheme-bound}
\\
$T$ & $\left \lceil \frac{2 \nm{\bm{A}}_2}{r} \right \rceil$ & Based on Theorem \ref{thm:restart-scheme-bound}, assuming $C = 1$
\\
$s$ & $\frac{T}{2 \nm{\bm{A}}_2}  $ & Based on Theorem \ref{thm:restart-scheme-bound}
\end{tabular}
\end{center}

\caption{Hyperparameter values used in the numerical experiments. The first three parameters are used in both the unrestarted and restarted primal-dual iterations. The final three parameters are used in the restarted scheme only.}
\label{tab:the-parameters}
\end{table}

The algorithms used in the main theorems (see Tables \ref{tab:the-algorithms} and \ref{tab:the-effic-algorithms}) are designed to ensure the desired error bounds. In our numerical experiments, we deviate from these values in a number of minor ways. However, our hyperparameter choices are still closely based on theory. We now discuss the hyperparameter choices used in the experiments. These choices are summarized in Table \ref{tab:the-parameters}.

First, we take the parameter $\lambda$ to be $\lambda = ( \sqrt{25 m})^{-1}$. This differs somewhat from the value $\lambda = (4 \sqrt{m/L})^{-1}$ used in the theoretical algorithms. The rationale behind doing this is that $L$ is, in practice, a polylogarithmic factor that arises from the compressed sensing theory. It is well known that logarithmic factors appearing in compressed sensing theory are generally quite pessimistic \cite{adcock2021sparse,foucart2013mathematical,adcock2021compressive}. Therefore, we avoid using $L$. The choice $\lambda = (5 \sqrt{m})^{-1}$ was obtained in \cite[App.\ A]{adcock2021sparse} after manual tuning.

As shown later, the primal-dual iteration converges subject to the condition $\nm{\bm{A}}^2_{2} \leq (\tau \sigma)^{-1}$. See Lemma \ref{lemma-subopti}. Since the error bound \R{subopti-error} scales linearly in $\tau^{-1}$ and $\sigma^{-1}$, a standard choice for these parameters is
\be{
\label{tau-sigma-practical}
\tau = \sigma = 1/\nm{\bm{A}}_2.
}
In Tables \ref{tab:the-algorithms} and \ref{tab:the-effic-algorithms} we choose $\tau = \sigma = (\Theta(n,d))^{-\alpha}$, since the latter is an upper bound for $\nm{\bm{A}}_{2}$, i.e., $\nm{\bm{A}}_{2} \leq (\Theta(n,d))^{\alpha}$. See \R{eq:Bound_A}. This bound is arguably quite crude. The reason for using it in our main theorems is to avoid having to compute $\nm{\bm{A}}_{2}$, since this generally cannot be done in finitely-many arithmetic operations. However, in our numerical experiments we simply use \R{tau-sigma-practical} instead, since it is simpler and $\nm{\bm{A}}_2$ can approximated efficiently in practice.

For the restarting scheme, we also have the scale parameter $0 < r < 1$, the constant $s > 0$ and the number of inner iterations $T$. These parameters are inferred from Theorem \ref{thm:restart-scheme-bound}. This result shows that the error in the restarted primal dual iteration after $l$ restarts is bounded by
\be{
\label{pdi-restart-bd}
r^l \nmu{\bm{b}}_{2;\cV} + \frac{r}{1-r} \zeta',
}
provided
\bes{
T = \left \lceil \frac{2 C}{r \sqrt{\sigma \tau}} \right \rceil,\qquad a_l = \frac12 \sigma \varepsilon_{l+1} T,\ l = 0,2,\ldots .
}
Here, as discussed in Theorem \ref{thm:restart-scheme-bound}, $C > 0$ is a numerical constant that arises from the compressed sensing theory.
This and the choice \R{tau-sigma-practical} leads immediately to the following value for $s$:
\bes{
s = \frac{T}{2 \nm{\bm{A}}_2}  .
}
Unfortunately, the constant $C$ is difficult to determine exactly (it is closely related to the constant $c^{\star}$ discussed in Remark \ref{cstar-no}). In our experiments, we simply pick the value $C = 1$. This immediately yields
\bes{
T = \left \lceil \frac{2 \nm{\bm{A}}_2}{r} \right \rceil.
}
Finally, to determine a value of $r$ we consider the error bound \R{pdi-restart-bd}. This is based on \cite{colbrook2021can}. After $l$ restarts, the total number of iterations $t = T l$. Substituting the value of $T$, we see that
\be{
\label{restarted-error-iterations}
r^{l} = \exp \left ( \log(r) t / T \right ) = \exp \left ( \log(r) \left \lceil \frac{2 \nm{\bm{A}}_2}{r} \right \rceil^{-1} t \right ).
}
Ignoring the ceiling function, it therefore makes sense to choose $0 < r < 1$ to minimize the function $r \mapsto r \log(r)$. This attains its minimum value of $-\E^{-1}$ at $r = \E^{-1}$. Hence we use this value.

\subsubsection{Test functions}

We consider four test functions. The first two are scalar-valued functions, given by
\be{
\label{f1-def}
f_1(\bm{y}) = \exp\left( - \frac{1}{2d} \sum_{k=1}^d y_k \right),\ \forall \bm{y} \in \cU,\quad \mbox{with $d = 2$},
}
and
\be{
\label{f2-def}
f_2(\bm{y}) = \exp\left( - \frac{2}{d} \sum_{k=1}^d (y_k - w_k)^2 \right),\ \forall \bm{y} \in \cU,\qquad 
 \mbox{with $w_k = \frac{(-1)^k}{k+1}$, $\forall k \in [d]$ and $d = 16$}.
}
These are standard test functions (see, e.g., \cite[\S A.1]{adcock2021sparse}).
The first function varies very little with respect to $\bm{y}$. Hence it is expected to be very well-approximated by a sparse polynomial approximation. The second has more variation in $\bm{y}$, therefore we expect a larger approximation error. 

We also consider two Hilbert-valued functions. These both arise as solutions of the parametric elliptic diffusion equation
\be{
\label{PDE}
-\nabla \cdot (a(\bm{x},\bm{y}) \nabla u(\bm{x},\bm{y})) =g(\bm{x}),\  \forall \bm{x} \in D,\, \bm{y} \in \cU,\quad u(\bm{x},\bm{y})= 0,\ \forall \bm{x} \in \partial D,\, \bm{y} \in \cU,
}
which is a standard problem in the parametric PDE literature. We take the physical domain $D$ as $D = (0,1)^2$. For simplicity, we also choose $g(\bm{x}) = 10$ 
to be constant. In this case, the solution map
\bes{
\cU \rightarrow \cV,\ \bm{y} \mapsto u(\cdot,\bm{y}),\qquad \cV = H^1_0(D),
}
is a Hilbert-valued function with codomain being the Sobolev space $H^1_0(D)$. We consider two different setups, leading to smooth and less smooth Hilbert-valued functions, which we denote as $f_3$ and $f_4$, respectively. The first is is a simple two-dimensional problem with lognormal diffusion coefficient:
\be{
\label{f3-def}
f_3:\quad d = 2,\ a(\bm{x},\bm{y}) = 5 + \exp(x_1 y_1 + x_2 y_2).
}
For the second, we consider the diffusion coefficient from \cite[Eqn.\ (24)]{adcock2021deep}, modified from an earlier example from \cite[Eqn.\ (5.2)]{nobile2008sparse}, with 30-dimensional parametric dependence and one-dimensional (layered) spatial dependence given by
\be{
\label{f4-def}
\begin{split}
f_4:\qquad d &= 30, \ a(\bm{x},\bm{y}) 
 = \exp\left(1+ y_1 \left(\frac{\sqrt{\pi}\beta}{2}\right)^{1/2} + \sum_{i=2}^d \; \zeta_i \; \vartheta_i(\bm{x}) \; y_i\right), \\
\zeta_i 
& := (\sqrt{\pi} \beta)^{1/2} \exp\left( \frac{-\left( \left\lfloor \frac{i}{2} \right\rfloor \pi \beta\right)^2}{8} \right), 
\quad
\vartheta_i(\bm{x}) 
 := \begin{cases} 
 \sin\left( \left\lfloor\frac{i}{2}\right\rfloor \pi x_1/\beta_p \right) & \mbox{$i$ even,}
 \\
 \cos\left(\left\lfloor\frac{i}{2}\right\rfloor \pi x_1/\beta_p \right) &  \mbox{$i$ odd,} 
 \end{cases}
 \\
 \beta_c &= 1/8 ,\ \beta_p = \max\{1, 2 \beta_c\},\ \beta = \beta_c/\beta_p.
 \end{split}
}

\subsubsection{Error metrics and finite element discretization}

In our experiments, we consider the relative $L^2_{\varrho}(\cU)$-norm error
\be{
\label{rel-error-scalar}
\frac{\nmu{f - \hat{f}}_{L^2_{\varrho}(\cU)}}{\nmu{f}_{L^2_{\varrho}(\cU)}},
}
for the scalar-valued functions $f_1$ and $f_2$ and the relative $L^2_{\varrho}(\cU ; H^1_0(D))$-norm error
\be{
\label{rel-error-Hilbert}
\frac{\nmu{f - \hat{f}}_{L^2_{\varrho}(\cU ; H^1_0(D))}}{\nmu{f}_{L^2_{\varrho}(\cU ; H^1_0(D))}},
}
for the Hilbert-valued functions $f_3$ and $f_4$. To (approximately) compute this error we use a high-order isotropic Smolyak sparse grid quadrature rule based on Clenshaw--Curtis points. This rule is generated using the TASMANIAN software package \cite{stoyanov2015tasmanian}. We set the level of the quadrature rule in each experiment as large as possible within the constraints of computational time and memory. 

We now describe the discretization $\cV_h$ for the Hilbert-valued functions $f_3$ and $f_4$. This is obtained via the finite element method as implemented by Dolfin \cite{LoggWells2010a}, and accessed through the python FEniCS project \cite{AlnaesBlechta2015a}. We generate a regular triangulation $\cT_h$ of $\overline{D}$ composed of triangles $T$ of equal diameter $h_T = h$. We consider a conforming discretization, which results in a finite-dimensional subspace $\cV_h \subset \cV = H_0^1(D)$, where $\cV_h$ is the space spanned by the usual Lagrange finite elements $\{ \varphi_i \}^{K}_{i=1}$ of order $k=1$. 
We rely on the Dolfin {\tt UnitSquareMesh} method to generate a mesh with 33 nodes per side, corresponding to a finite element triangulation with $K=1089$ nodes, 2048 elements and meshsize $h = \sqrt{2}/32$.
 See \cite{dexter2019mixed,adcock2021deep} for further implementation details. 

Explicit forms of the Hilbert-valued functions $f_3$ and $f_4$ are not available. Therefore, computing the relative error requires first computing a reference solution. This is usually done by using a finite element discretization with meshsize an order of magnitude smaller than that used to compute the various approximations. However, our main focus in these experiments is on the polynomial approximation and algorithmic errors $E_{\textsf{app}}$ and $E_{\textsf{alg}}$. Since our theoretical results assert that the approximations are robust to physical discretization error, we do not perform this additional (and costly) computational step. Instead, we compute reference solutions using the same finite element discretization as that used to construct the various approximations. In other words, there is no physical discretization error present in these experiments.

\subsection{Numerical results 1: the optimization error}

Our first experiments, Figures \ref{fig:SV_f1}--\ref{fig:HV_f4}, compare the behaviour of the unrestarted primal-dual iteration to the restarted primal-dual iteration with several different values of the tolerance parameter $\zeta'$. In all cases, we observe a consistent improvement from the restarted scheme. This is particularly noticeable for the functions $f_1$ and $f_3$, since the underlying approximation error $\zeta$ is smaller in these cases. Recall that these functions are well-approximated by polynomials. As predicted by our theoretical results, the error for the restarted scheme decays exponentially fast with respect to the number of iterations to this limiting accuracy. For example, in the case of $f_1$ the restarted scheme (with sufficiently small $\zeta'$) achieves a relative error of less than $10^{-6}$ using only 500 iterations. However, the unrestarted scheme only achieves an error of around $10^{-3}$ after 1000 iterations.

An important takeaway from these experiments is the insensitivity of the algorithm to the parameter $\zeta'$. Our theoretical results only show exponential convergence (with respect to iteration number) when $\zeta' \geq \zeta$, where $\zeta$ is a certain upper bound for the error. This appears unnecessary in practice. For instance, in Figures \ref{fig:SV_f2} and \ref{fig:HV_f4} we expect the underlying error $\zeta$ to be roughly $10^{-2}$ in magnitude, since this is the limiting error achieved by the unrestarted scheme. Yet setting $\zeta' = 10^{-10}$ has no noticeable effect on the performance of the restarted scheme. Moreover, for $\zeta'\in\{10^{-4},10^{-6},10^{-8},10^{-10}\}$ the results are nearly identical in both Figures \ref{fig:SV_f2} and \ref{fig:HV_f4} and hence the plot lines are overlayed for the restarted scheme.

Another noticeable feature of these experiments is the close agreement between the theorized rate of exponential decay of the restarted scheme, which is given by the right-hand side of \R{restarted-error-iterations} and what is observed in practice. Since the value $r = \E^{-1}$ is used in these experiments, in Figures \ref{fig:SV_f1}--\ref{fig:HV_f4} we also plot the function
\be{
\label{theory-error-curve}
\exp \left ( - c t \right ),\qquad c : = \left \lceil 2 \E \nmu{\bm{A}}_2 \right \rceil^{-1}
}
versus the iteration number $t$. This theoretical curve exactly predicts the observed rate of exponential decay of the restarted schemes. 

Finally, in all four figures we also show the error of the (restarted) primal-dual iterates, as well as the ergodic sequences. Despite the theoretical results only holding for the latter, we see similar error decay for the iterates. In fact, the iterates give slightly better performance in the case of the unrestarted scheme. As expected, the ergodic sequence reduces the variation in the error for the restarted scheme. Moreover, plotting the ergodic sequence we can see more clearly the benefit of using restarts over not restarting.

\begin{figure}
\centering
\includegraphics[width=0.35\paperwidth, clip=true, trim=0mm 0mm 0mm 0mm]{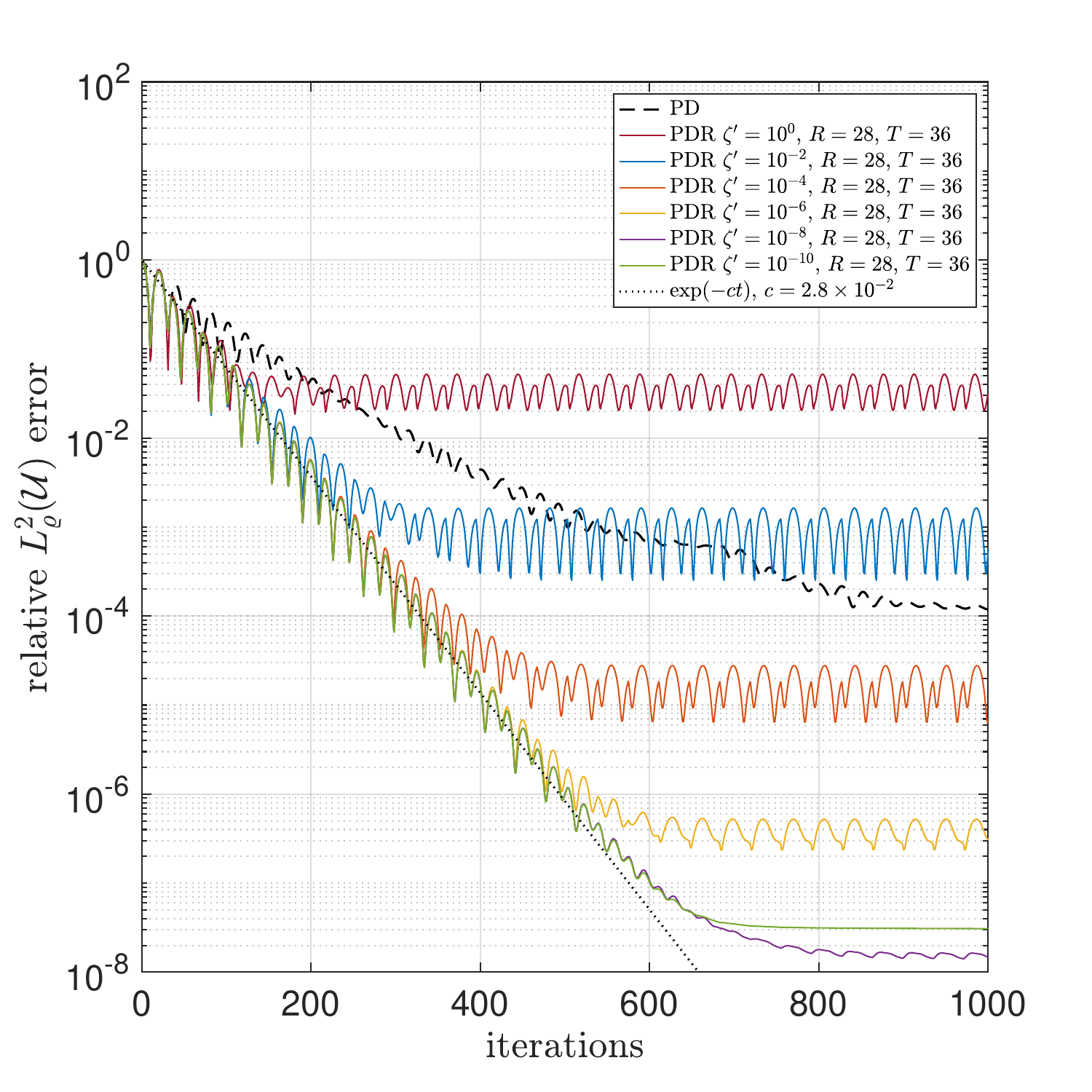}
\includegraphics[width=0.35\paperwidth, clip=true, trim=0mm 0mm 0mm 0mm]{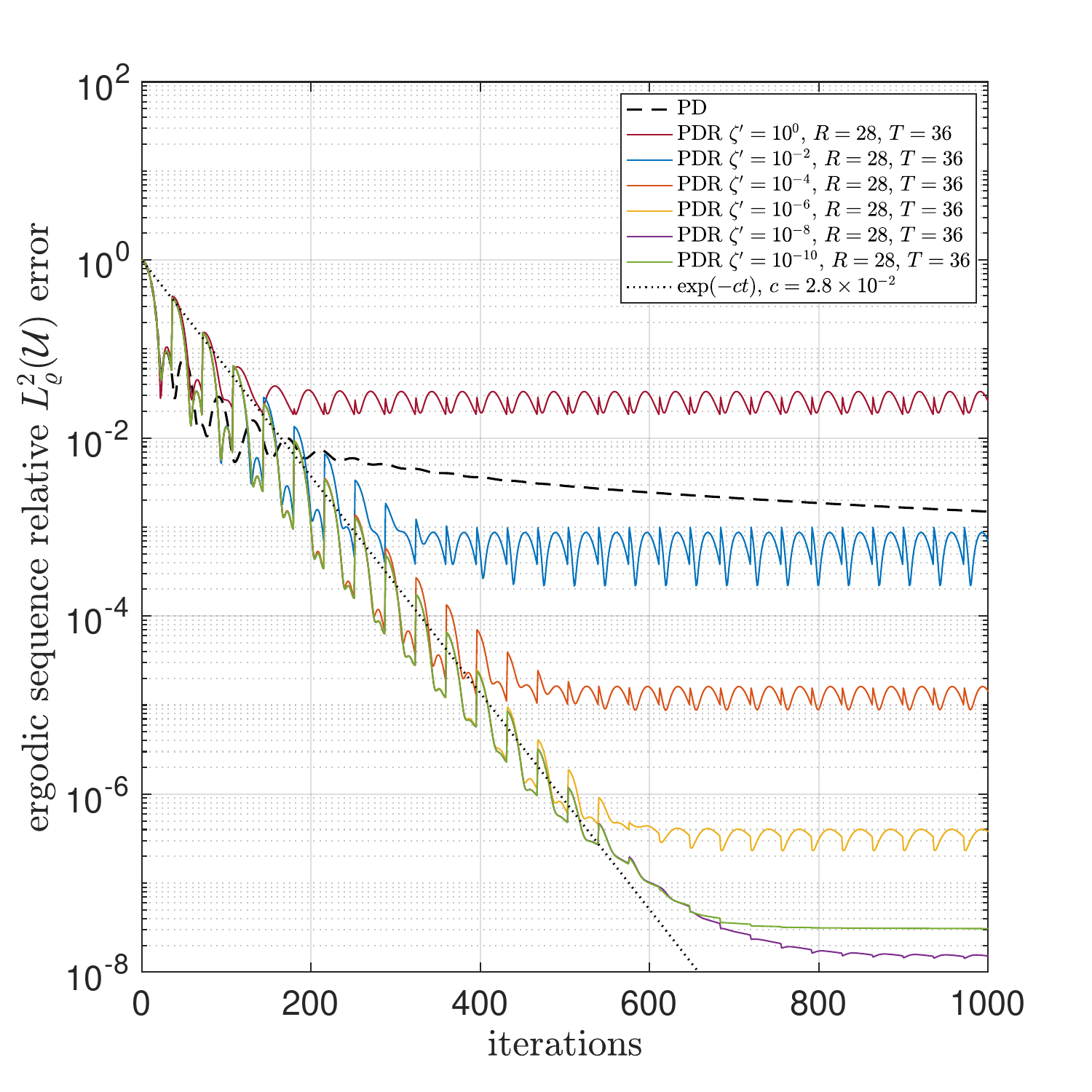}
\caption{Approximation error versus iteration number for the function $f_1$ from \R{f1-def}. This figure shows the 
relative $L^2$ errors of the polynomial approximations obtained from {\bf (left)} the iterates $\bm{c}^{(n)}$ and {\bf (right)} the ergodic sequence $\bar{\bm{c}}^{(n)}$. These approximations are constructed using the Legendre polynomial basis and $m = 250$ sample points drawn randomly and independently from the uniform measure. The index set $\Lambda = \Lambda^{\textsf{HC}}_{n,d}$, where $d = 2$ and $n = 184$, which gives a basis of cardinality $N = |\Lambda| = 997$. We compare the primal dual iteration ``PD'' and the restarted primal dual iteration ``PDR" for various values of the tolerance $\zeta'$. We also plot the theoretical error curve \R{theory-error-curve}, where $t$ is the iteration number.
The quadrature rule used to compute the relative error is a sparse grid rule of level 11 consisting of $M = 7169$ points. 
}
\label{fig:SV_f1}
\end{figure}

\begin{figure}
\centering
\includegraphics[width=0.35\paperwidth, clip=true, trim=0mm 0mm 0mm 0mm]{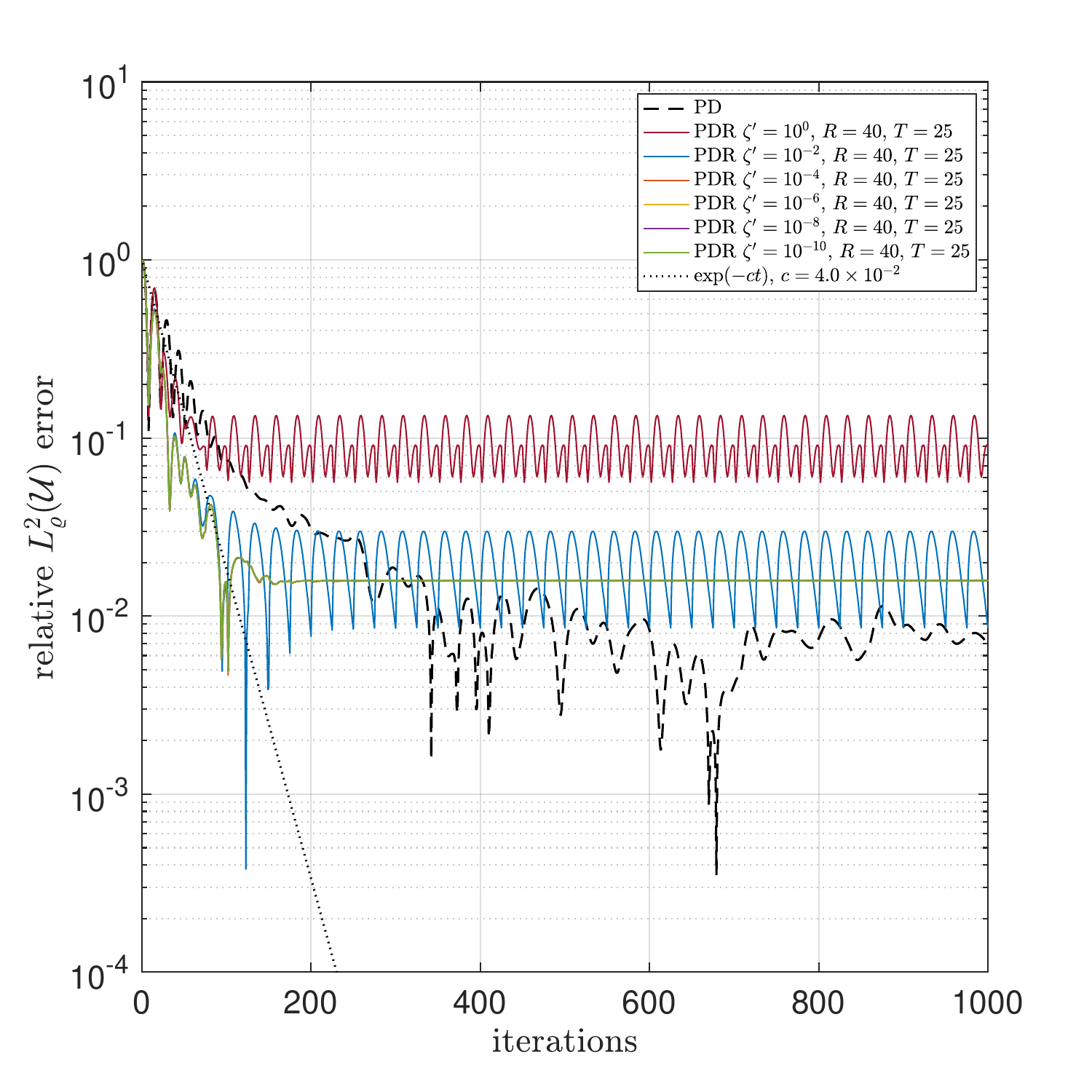}
\includegraphics[width=0.35\paperwidth, clip=true, trim=0mm 0mm 0mm 0mm]{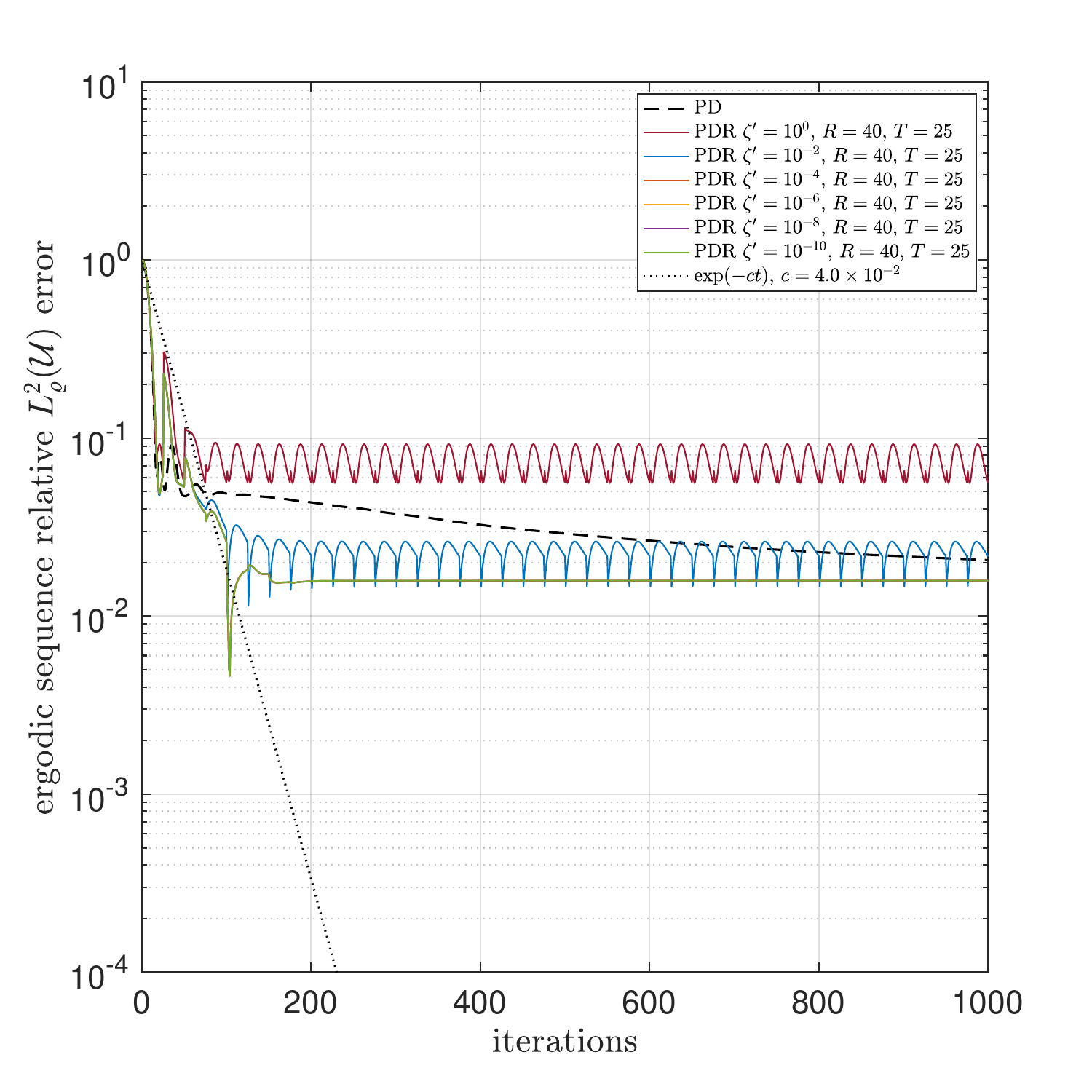}
\caption{Approximation error versus iteration number for the function $f_2$ from \R{f2-def}. This figure shows the 
relative $L^2$ errors of the polynomial approximations obtained from {\bf (left)} the iterates $\bm{c}^{(n)}$ and {\bf (right)} the ergodic sequence $\bar{\bm{c}}^{(n)}$. These approximations are constructed using the Legendre polynomial basis and $m = 2000$ sample points drawn randomly and independently from the uniform measure. The index set $\Lambda = \Lambda^{\textsf{HC}}_{n,d}$, where $d = 16$ and $n = 16$, which gives a basis of cardinality $N = |\Lambda| = 8277$. We compare the primal dual iteration ``PD'' and the restarted primal dual iteration ``PDR" for various values of the tolerance $\zeta'$. We also plot the theoretical error curve \R{theory-error-curve}, where $t$ is the iteration number. The quadrature rule used to compute the relative error is a sparse grid rule of level 5 consisting of $M = 51137$ points.}
\label{fig:SV_f2}
\end{figure}

\begin{figure}
\centering
\includegraphics[width=0.35\paperwidth, clip=true, trim=0mm 0mm 0mm 0mm]{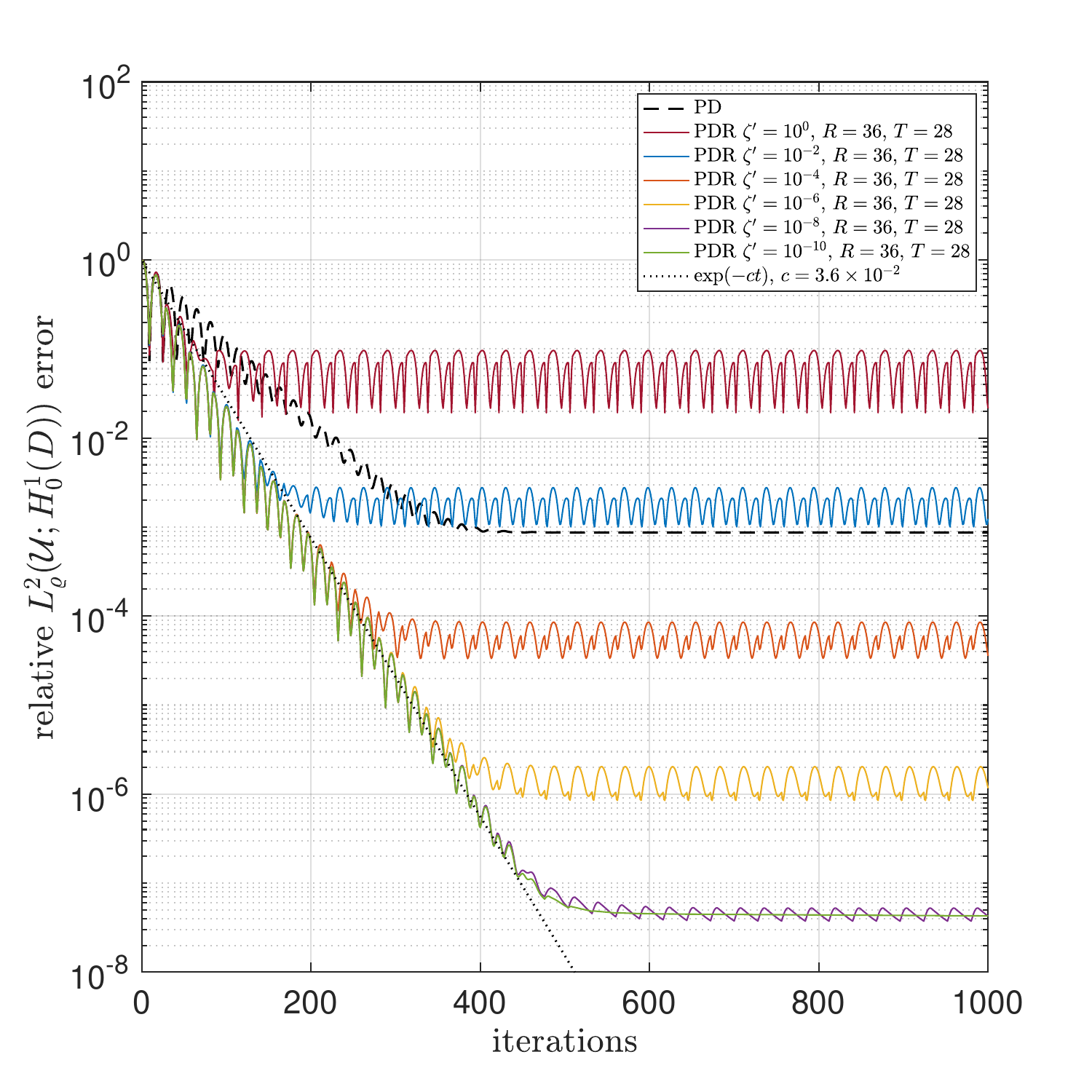}
\includegraphics[width=0.35\paperwidth, clip=true, trim=0mm 0mm 0mm 0mm]{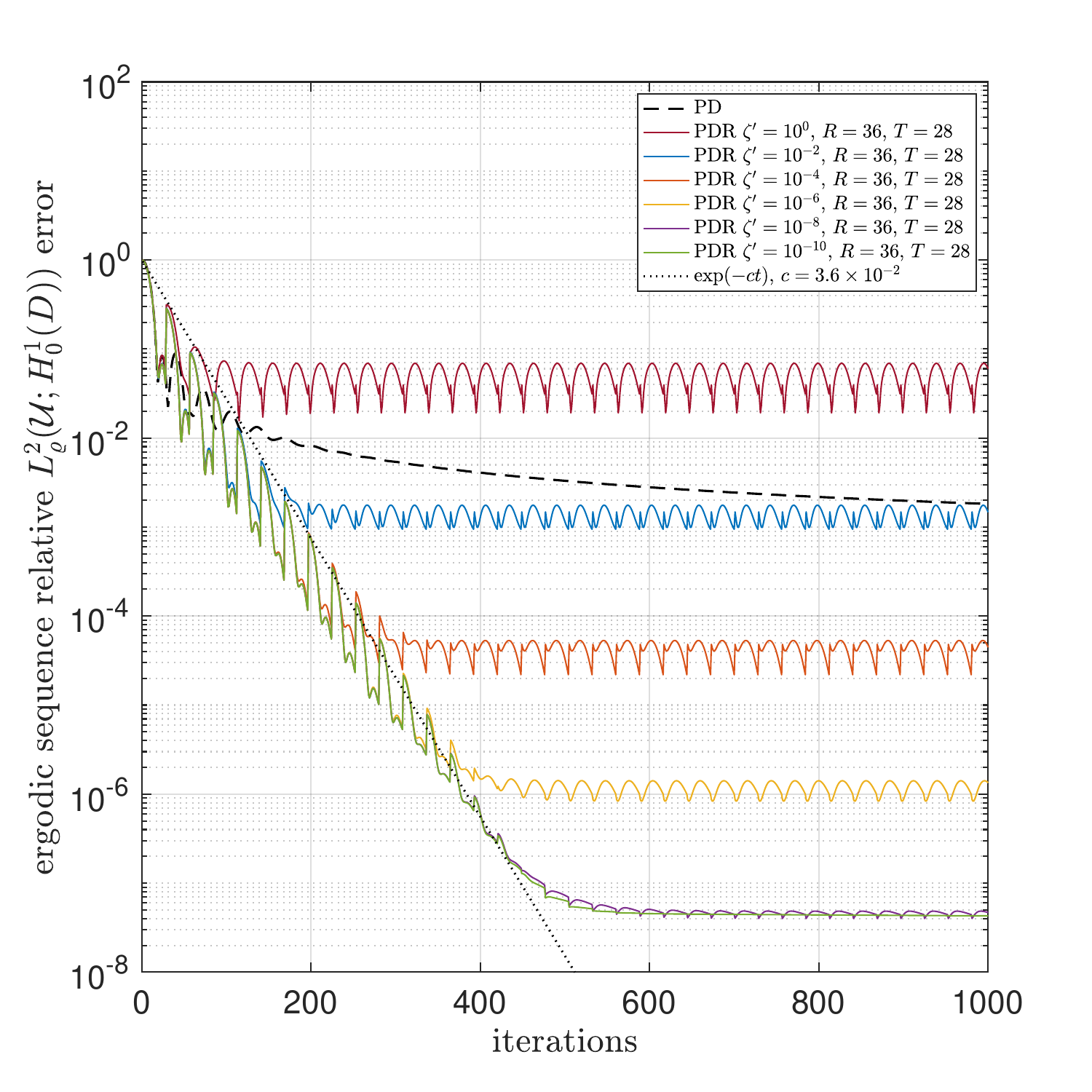}

\caption{Approximation error versus iteration number for the function $f_3$ from \R{f3-def}. This figure shows the 
relative $L^2$ errors of the polynomial approximations obtained from {\bf (left)} the iterates $\bm{c}^{(n)}$ and {\bf (right)} the ergodic sequence $\bar{\bm{c}}^{(n)}$. These approximations are constructed using the Legendre polynomial basis and $m = 250$ sample points drawn randomly and independently from the uniform measure. The index set $\Lambda = \Lambda^{\textsf{HC}}_{n,d}$, where $d = 2$ and $n = 184$, which gives a basis of cardinality $N = |\Lambda| = 997$. We compare the primal dual iteration ``PD'' and the restarted primal dual iteration ``PDR" for various values of the tolerance $\zeta'$. We also plot the theoretical error curve \R{theory-error-curve}, where $t$ is the iteration number. The quadrature rule used to compute the relative error is a sparse grid rule of level 9 consisting of $M = 1537$ points.}
\label{fig:HV_f3}
\end{figure}

\begin{figure}
\centering
\includegraphics[width=0.35\paperwidth, clip=true, trim=0mm 0mm 0mm 0mm]{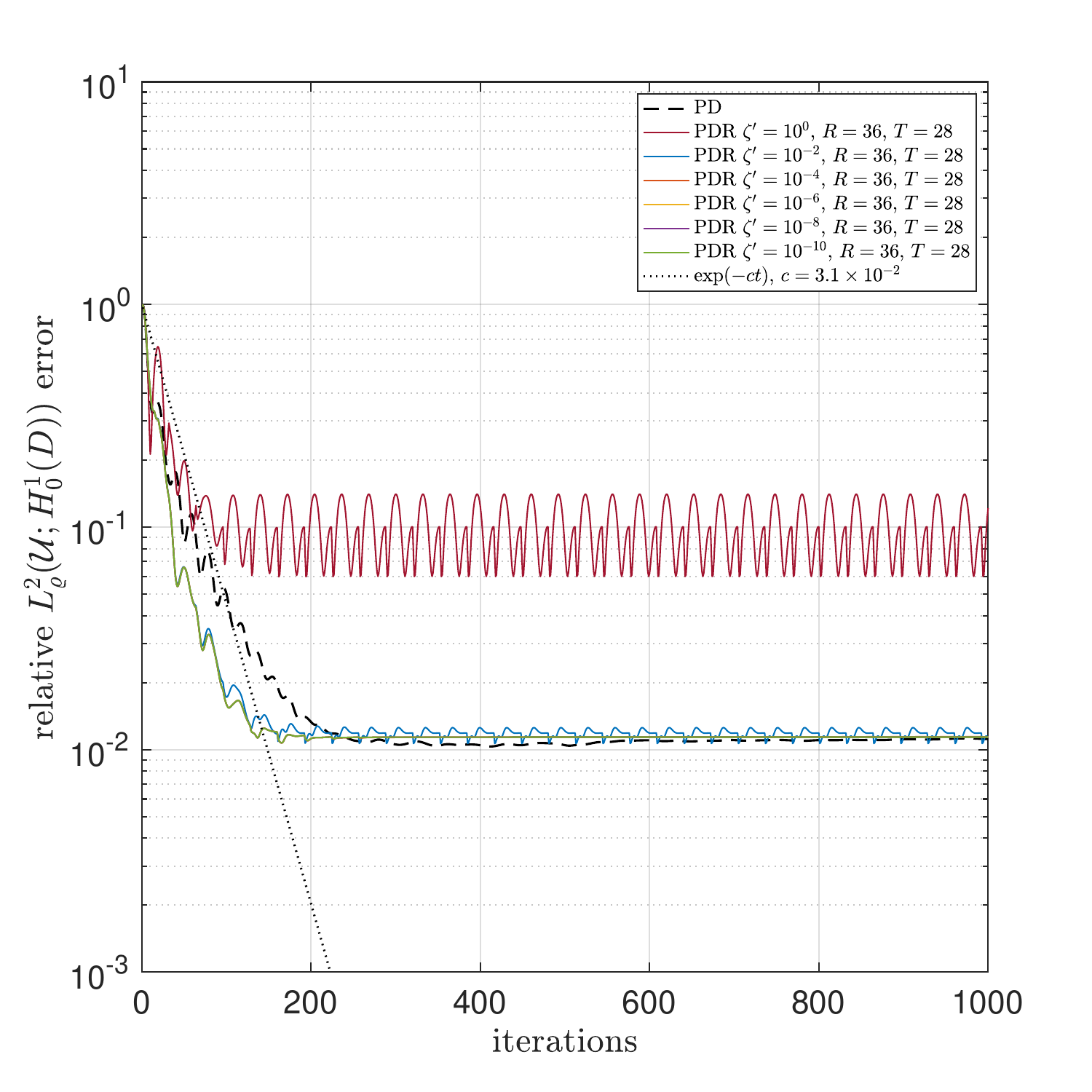}
\includegraphics[width=0.35\paperwidth, clip=true, trim=0mm 0mm 0mm 0mm]{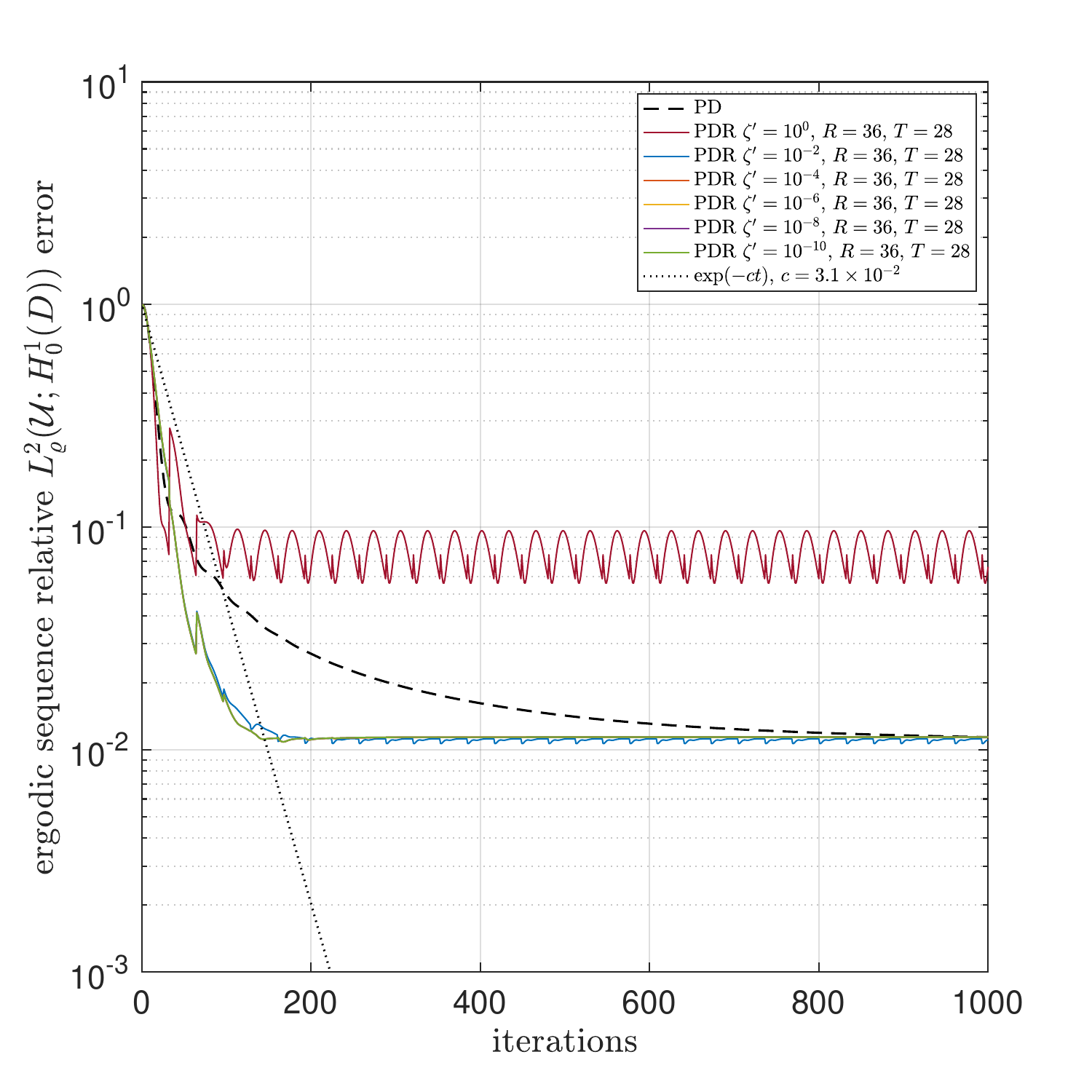}

\caption{Approximation error versus iteration number for the function $f_4$ from \R{f4-def}. This figure shows the 
relative $L^2$ errors of the polynomial approximations obtained from {\bf (left)} the iterates $\bm{c}^{(n)}$ and {\bf (right)} the ergodic sequence $\bar{\bm{c}}^{(n)}$. These approximations are constructed using the Legendre polynomial basis and $m = 1000$ sample points drawn randomly and independently from the uniform measure. The index set $\Lambda = \Lambda^{\textsf{HC}}_{n,d}$, where $d = 30$ and $n = 10$, which gives a basis of cardinality $N = |\Lambda| = 7841$. We compare the primal dual iteration ``PD'' and the restarted primal dual iteration ``PDR" for various values of the tolerance $\zeta'$. We also plot the theoretical error curve \R{theory-error-curve}, where $t$ is the iteration number. The quadrature rule used to compute the relative error is a sparse grid rule of level 3 consisting of $M = 1861$ points.}
\label{fig:HV_f4}
\end{figure}

\subsection{Numerical results 2: approximation error and run time}

In the second set of experiments, our aim is to study the approximation error versus the number of samples $m$. Having compared different solvers in the previous experiments, we now limit our attention to the restarted primal-dual iteration. The only modification we make is to introduce a stopping criterion for the number of restarts. Specifically, given a tolerance $\zeta'$, we halt the iteration if the difference between two consecutive iterates is less than $5 \cdot \zeta'$. Specifically, if 
\bes{
\nmu{\tilde{\bm{c}}^{(l)} - \tilde{\bm{c}}^{(l-1)} }_{2} \leq 5 \cdot \zeta',
}
in the scalar-valued case or
\bes{
\nmu{\tilde{\bm{c}}^{(l)} - \tilde{\bm{c}}^{(l-1)} }_{2;\cV} \leq 5 \cdot \zeta',
}
in the Hilbert-valued case, 
where $\tilde{\bm{c}}^{(l)}$ is the output of the restarted primal-dual iteration after $l$ restarts, then we halt and take $\tilde{\bm{c}}^{(l)}$ as the polynomial coefficients of the resulting approximation.

In the following experiments, we perform multiple trials for each value of $m$.  For each trial, we generate a set of sample Monte Carlo points $\bm{y}_1,\ldots,\bm{y}_m$, then compute the relative error \R{rel-error-scalar} or \R{rel-error-Hilbert} of the approximation using a sparse grid quadrature as before. Having done this, we then compute the sample mean and (corrected) sample standard deviation after a log transformation. See \cite[\S A.1.3]{adcock2021sparse} for further discussion and rationale behind this computation.

The results for the four functions $f_1,f_2,f_3,f_4$ are shown in Figures \ref{fig:SV_f1_m_comp}--\ref{fig:HV_f4_m_comp}. Figure \ref{fig:SV_f1_m_comp} shows the average approximation error and run times for $f_1$. As discussed, this function is expected to be well-approximated by polynomials. In accordance, the error decreases rapidly, achieving roughly $10^{-7}$ relative $L^2$ error when $m \approx 200$. This is in broad agreement with the exponential decay rate of the error shown in our main theorems. In Figure \ref{fig:SV_f2_m_comp} we consider the more challenging, higher-dimensional function $f_2$, plotting the average approximation error and run time. Here, as expected, the error decreases significantly more slowly.  Both figures exhibit a linear scaling of the run time with the number of samples $m$. This is consistent with our analysis, since each algorithm iteration involves dense matrix-vector multiplications with an $m \times N$ matrix. Also, comparing Figure \ref{fig:SV_f1_m_comp} and Figure \ref{fig:SV_f2_m_comp} when $m = 250$, we notice the run time is roughly 16 times larger for the latter. This is also in agreement with our analysis. Indeed, $N \approx 1000$ in Figure \ref{fig:SV_f1_m_comp} while $N \approx 8000$ in Figure \ref{fig:SV_f2_m_comp}. However, the number of inner iterations $T = \lceil 2 \nm{\bm{A}}_2 / r \rceil$ is roughly twice as large in Figure \ref{fig:SV_f2_m_comp}, where $\nm{\bm{A}}_2 \approx 13$ when $m = 250$, as it is in Figure \ref{fig:SV_f1_m_comp}, where $\nm{\bm{A}}_2 \approx 7$. The combination of these two factors accounts for the roughly 16-fold increase in run time.

Figure \ref{fig:HV_f3_m_comp} displays the performance of the restarted scheme on the Hilbert-valued function $f_3$. Here we also observe rapid decrease in the error with respect to increasing number of samples $m$, with relative $L^2$ error approximately $10^{-6}$ when $m \approx 200$. Finally, Figure \ref{fig:HV_f4_m_comp} shows the results for the less smooth high-dimensional Hilbert-valued function $f_4$. For this function, we expect slower decrease in the error with respect to $m$, which is reflected in this set of results. Nonetheless, despite its high dimensionality ($d = 30$) we still achieve two digits of relative accuracy using only $m \approx 1000$ samples.

\begin{figure}
\centering
\includegraphics[width=0.35\paperwidth, clip=true, trim=0mm 0mm 0mm 0mm]{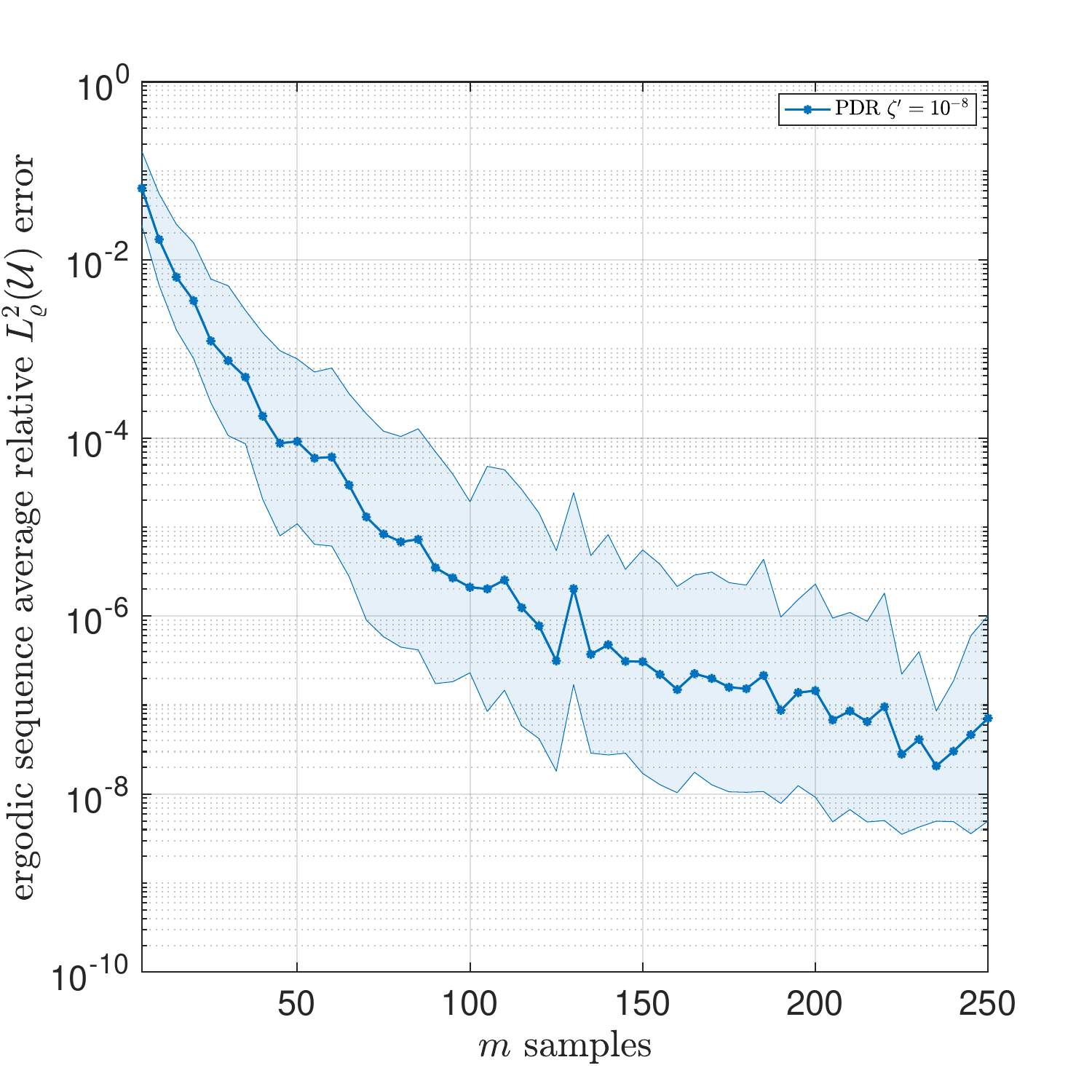} 
\includegraphics[width=0.35\paperwidth, clip=true, trim=0mm 0mm 0mm 0mm]{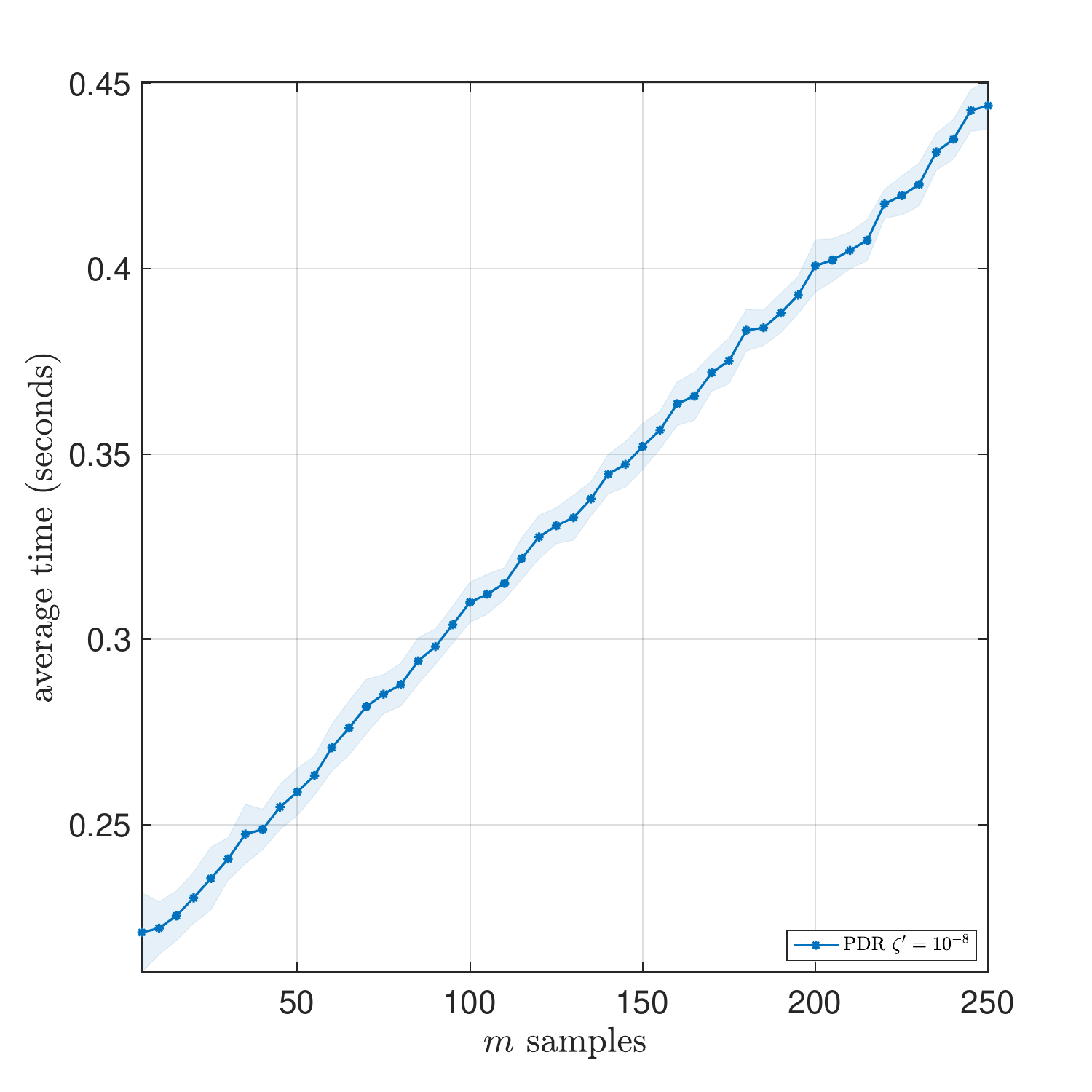}
\caption{\textbf{(left)} Approximation error and \textbf{(right)} average run time versus number of samples $m$ for the function $f_1$ from \R{f1-def}. This figure shows the 
relative $L^2$ errors of the polynomial approximations obtained from the ergodic sequence $\bar{\bm{c}}^{(n)}$. These approximations are constructed using the Legendre polynomial basis and various sets of $m$ sample points drawn randomly and independently from the uniform measure for each trial. The index set $\Lambda = \Lambda^{\textsf{HC}}_{n,d}$, where $d = 2$ and $n = 184$, which gives a basis of cardinality $N = |\Lambda| = 997$. We use the restarted primal dual iteration ``PDR" with $\zeta' = 10^{-8}$, and display the average error over 50 trials measured in the sample mean in blue and the corrected sample standard deviation after a log transformation in shaded blue, see \cite[Appendix A.1.3]{adcock2021sparse} for more details.
The quadrature rule used to compute the relative error is a sparse grid rule of level 11 consisting of $M = 7169$ points.
}
\label{fig:SV_f1_m_comp}
\end{figure}

\begin{figure}
\centering
\includegraphics[width=0.35\paperwidth, clip=true, trim=0mm 0mm 0mm 0mm]{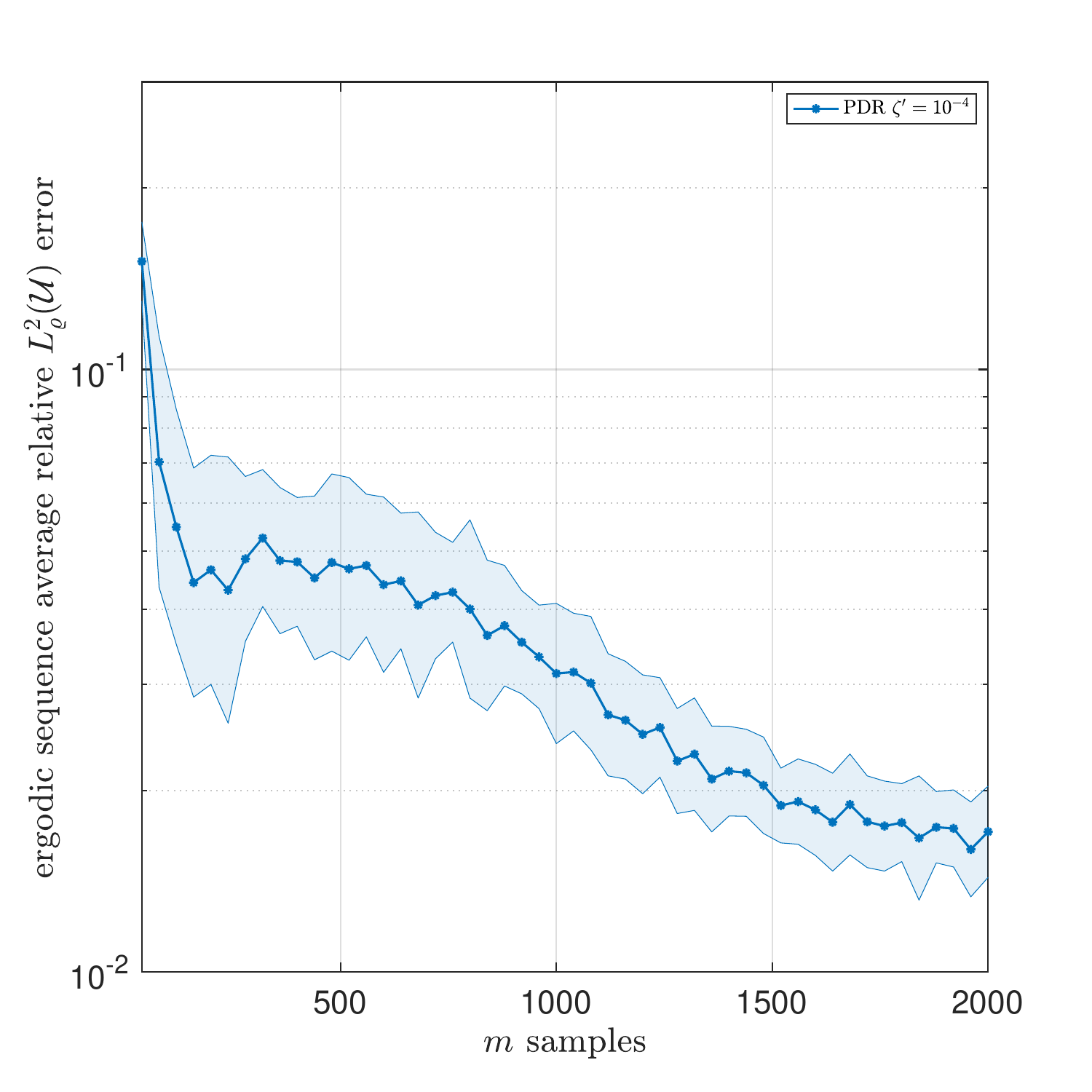}
\includegraphics[width=0.35\paperwidth, clip=true, trim=0mm 0mm 0mm 0mm]{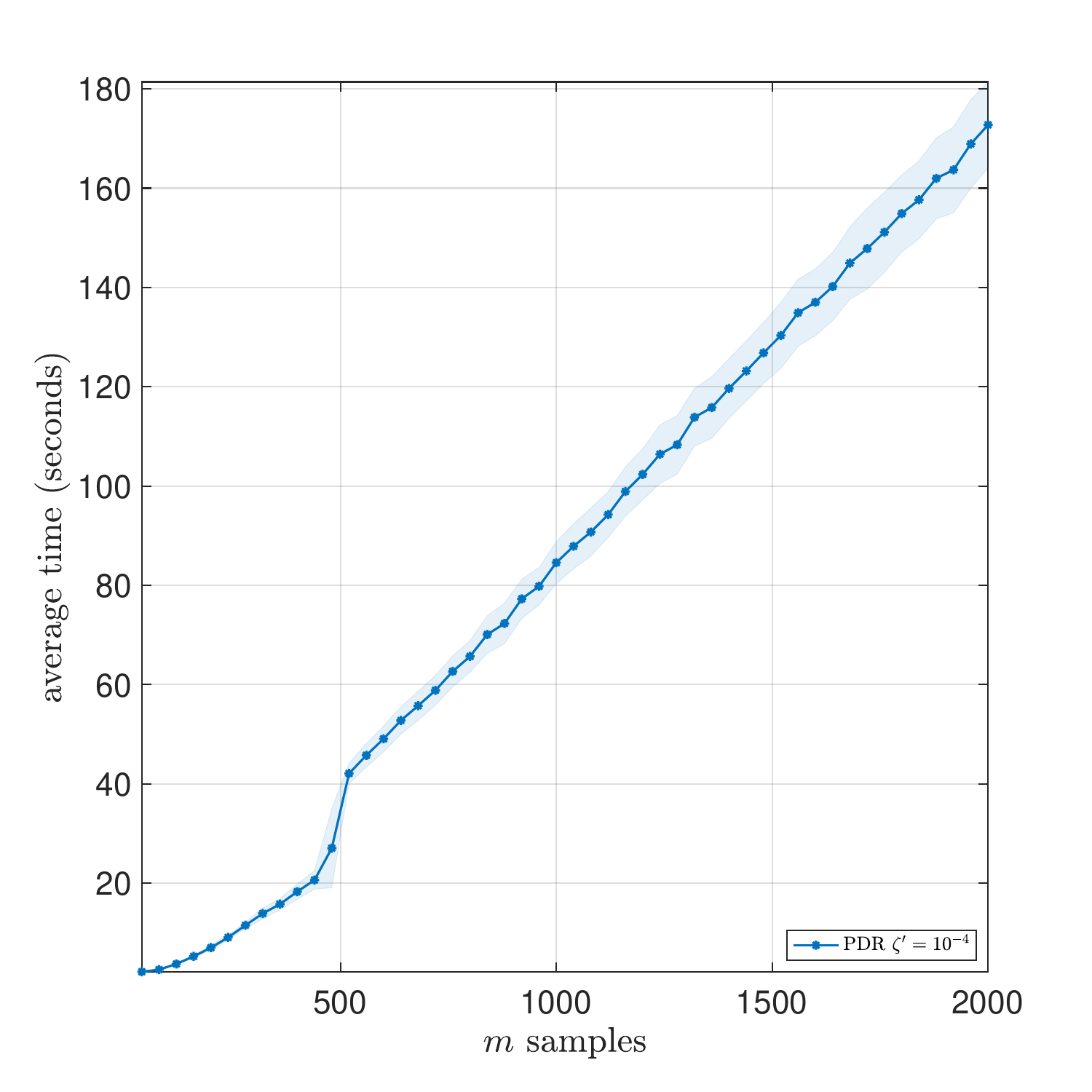}
\caption{\textbf{(left)} Approximation error and \textbf{(right)} average run time versus number of samples $m$ for the function $f_2$ from \R{f2-def}. This figure shows the 
relative $L^2$ errors of the polynomial approximations obtained from the ergodic sequence $\bar{\bm{c}}^{(n)}$. These approximations are constructed using the Legendre polynomial basis and various sets of $m$ sample points drawn randomly and independently from the uniform measure for each trial. The index set $\Lambda = \Lambda^{\textsf{HC}}_{n,d}$, where $d = 16$ and $n = 16$, which gives a basis of cardinality $N = |\Lambda| = 8277$. We compare the restarted primal dual iteration ``PDR" with $\zeta' = 10^{-4}$ with the average performance over 50 trials measured in the sample mean in blue and the corrected sample standard deviation after a log transformation in shaded blue, see \cite[Appendix A.1.3]{adcock2021sparse} for more details.
The quadrature rule used to compute the relative error is a sparse grid rule of level 5 consisting of $M = 51137$ points.}
\label{fig:SV_f2_m_comp}
\end{figure}

\begin{figure}
\centering
\includegraphics[width=0.5\paperwidth, clip=true, trim=0mm 0mm 0mm 0mm]{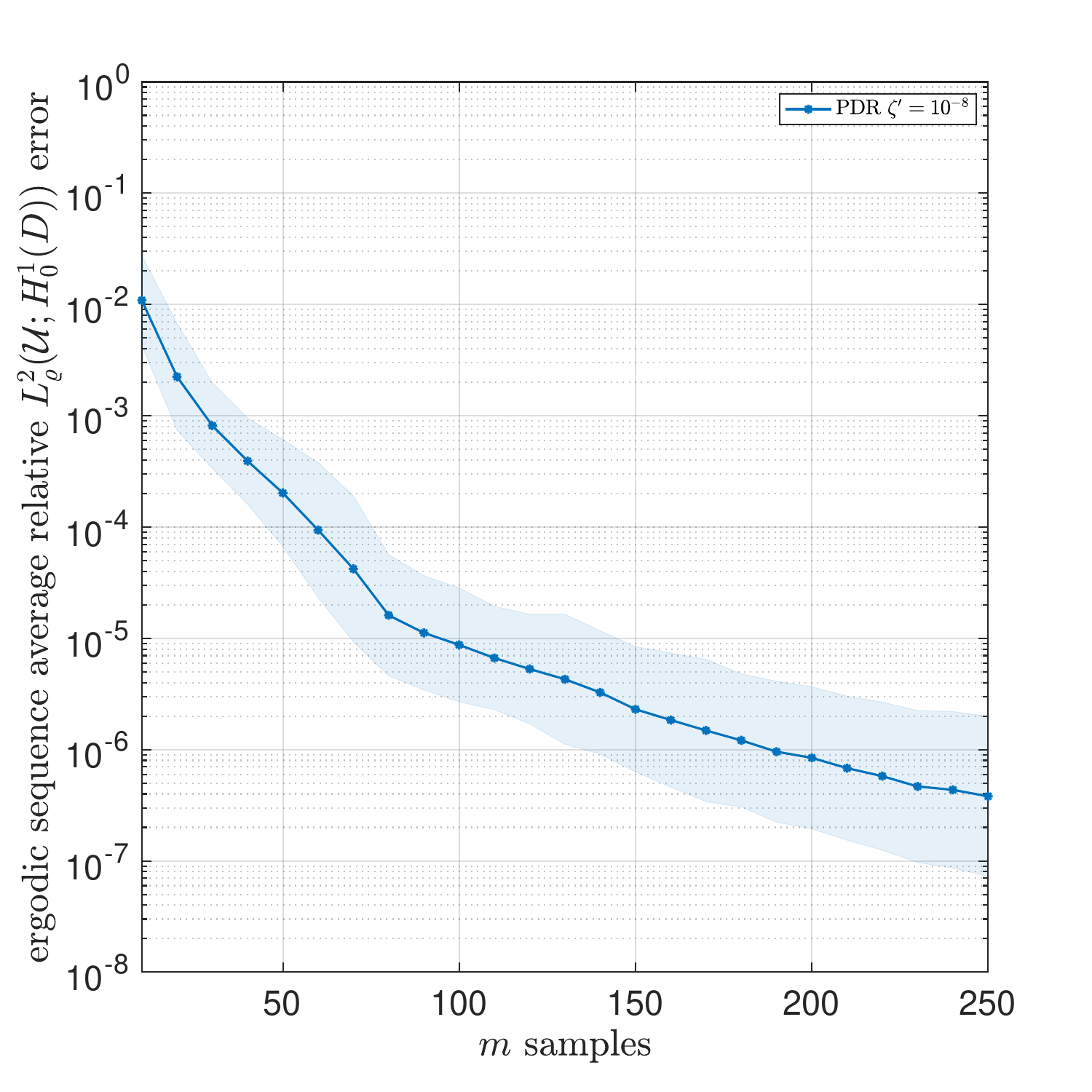}
\caption{Approximation error versus number of samples $m$ for the function $f_3$ from \R{f3-def}. This figure shows the 
relative $L^2$ errors of the polynomial approximations obtained from the ergodic sequence $\bar{\bm{c}}^{(n)}$. These approximations are constructed using the Legendre polynomial basis and various sets of $m$ sample points drawn randomly and independently from the uniform measure for each trial. The index set $\Lambda = \Lambda^{\textsf{HC}}_{n,d}$, where $d = 2$ and $n = 184$, which gives a basis of cardinality $N = |\Lambda| = 997$. We compare the restarted primal dual iteration ``PDR" with $\zeta' = 10^{-8}$ with the average performance over 50 trials measured in the sample mean in blue and the corrected sample standard deviation after a log transformation in shaded blue, see \cite[Appendix A.1.3]{adcock2021sparse} for more details.
The quadrature rule used to compute the relative error is a sparse grid rule of level 11 consisting of $M = 7169$ points.}
\label{fig:HV_f3_m_comp}
\end{figure}

\begin{figure}
\centering
\includegraphics[width=0.5\paperwidth, clip=true, trim=0mm 0mm 0mm 0mm]{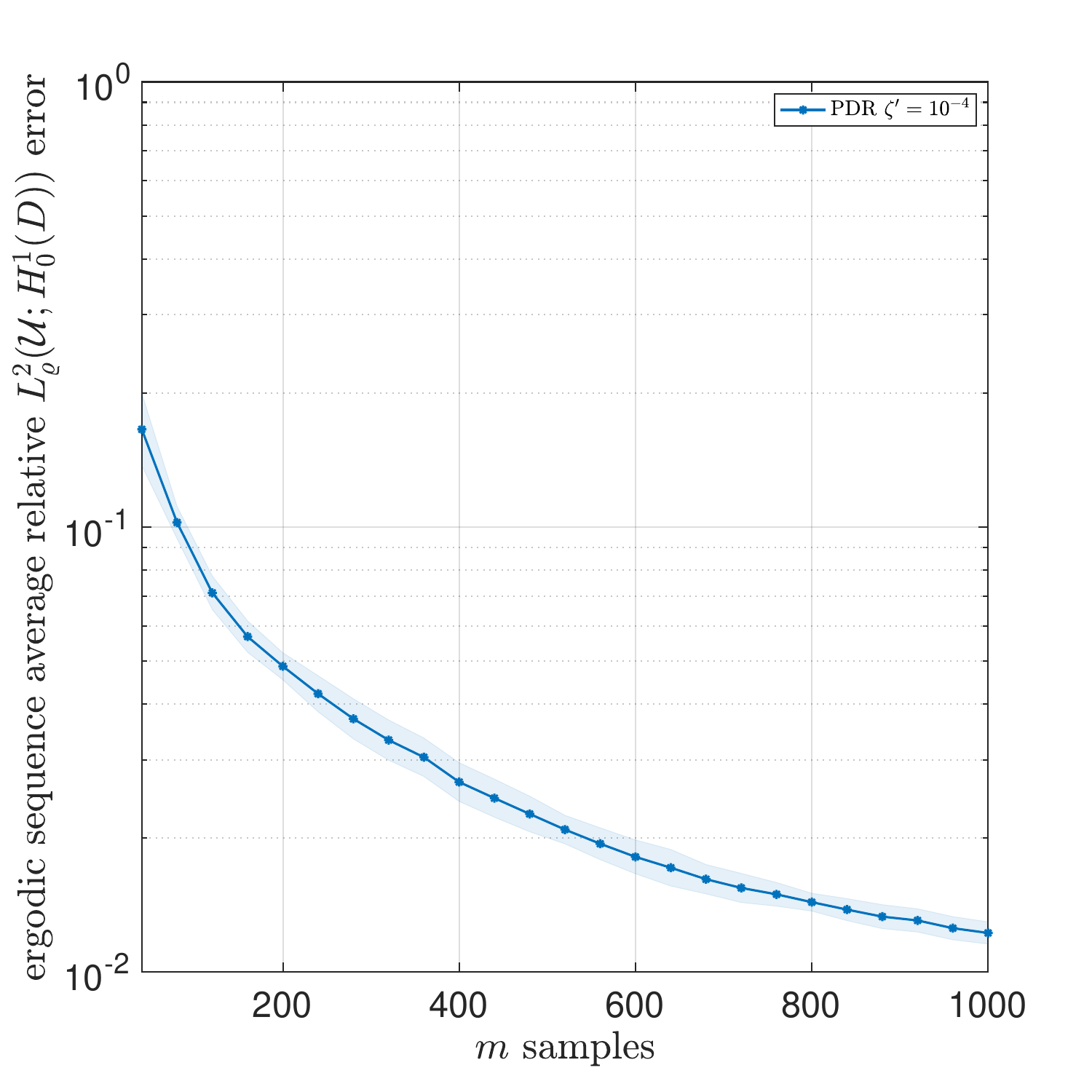}
\caption{Approximation error versus number of samples $m$ for the function $f_4$ from \R{f4-def}. This figure shows the 
relative $L^2$ errors of the polynomial approximations obtained from the ergodic sequence $\bar{\bm{c}}^{(n)}$. These approximations are constructed using the Legendre polynomial basis and various sets of $m$ sample points drawn randomly and independently from the uniform measure for each trial. The index set $\Lambda = \Lambda^{\textsf{HC}}_{n,d}$, where $d = 30$ and $n = 10$, which gives a basis of cardinality $N = |\Lambda| = 7841$. We compare the restarted primal dual iteration ``PDR" with $\zeta' = 10^{-4}$ with the average performance over 50 trials measured in the sample mean in blue and the corrected sample standard deviation after a log transformation in shaded blue, see \cite[Appendix A.1.3]{adcock2021sparse} for more details.
The quadrature rule used to compute the relative error is a sparse grid rule of level 3 consisting of $M = 1861$ points.}
\label{fig:HV_f4_m_comp}
\end{figure}

\section{Overview of the proofs}\label{s:proof-overview}

The rest of this paper is devoted to proving the main results. Since these involve a number of technical steps, we now give a brief overview of how these proofs proceed.

We commence in \S \ref{ss:HCS_recovery} by developing compressed sensing theory for Hilbert-valued vectors. We introduced the so-called \textit{weighted robust Null Space Property (rNSP) over $\cV$}, and then show in Lemma \ref{lemma-musuboptimal} that it implies certain error bounds for inexact minimizers of the Hilbert-valued, weighted SR-LASSO problem. Next, we introduced the \textit{weighted Restricted Isometry Property (RIP)} and then in Lemma \ref{implies-RIP} we show that this property over $\bbC$ implies the weighted rNSP over $\cV$.

In \S \ref{s:poly-app-CS} we focus on the polynomial approximation problem. We first give a sufficient condition in terms of $m$ for the measurement matrix \R{def-measMatrix} to satisfy the weighted RIP with high probability (Lemma \ref{l:LegMat_RIP}). Next, we state and prove three general results (Theorems \ref{t:main-res-map-alg_inex}--\ref{t:main-res-map-alg_inex_exponential}) that give error bounds for polynomial approximations obtained as inexact minimizers of the Hilbert-valued, weighted SR-LASSO problem. These results are split into the three cases considered in our main results, i.e., the algebraic and finite-dimensional case, the algebraic and infinite-dimensional case, and the exponential case. The error bounds in these results split into terms corresponding to the polynomial approximation error, the physical discretization error, the sampling error, and the error in the objective function at the inexact minimizer.

With this in mind, in the next section, \S \ref{s:errbds_PDI}, we first present error bounds for inexact minimizers obtained by finitely-many iterations of the primal-dual iteration. See Lemma \ref{lemma-subopti}. Having done this, we then have the ingredients needed to derive the restarting scheme. We derive this scheme and present an error bound for it in Theorem \ref{thm:restart-scheme-bound}.

We conclude with in \S \ref{s:final-args} with the final arguments. We use the three key theorems (Theorems \ref{t:main-res-map-alg_inex}--\ref{t:main-res-map-alg_inex_exponential}) and then proceed to estimate each of the aforementioned error terms. For the polynomial approximation error we applied to several results that are given in Appendix \ref{s:best-poly-rates}. For the error in the objective function we use the results shown in \S \ref{s:errbds_PDI}. After straightforwardly bounding the other two error terms, we finally obtain the main results.

\section{Hilbert-valued compressed sensing}\label{ss:HCS_recovery}

In this section, we develop Hilbert-valued compressed sensing theory. Here, rather than the classical setting of a vector in $\bbC^N$, one seeks to recover an Hilbert-valued vector in $\cV^N$.  This was considered in \cite{dexter2019mixed} in the for the classical sparsity model with $\ell^1$-minimization. Here, we consider the weighted sparsity model and weighted $\ell^1$-minimization. This model was first developed in \cite{rauhut2016interpolation}. See also \cite{adcock2018infinite,chkifa2018polynomial} and \cite[Chpt.\ 6]{adcock2021sparse}. Note that in this section, we shall write $\cV$ rather than $\cV_h$, as is done in \R{wsr-LASSO}. Of course, all the results shown below for $\cV$ will apply in the case of $\cV_h$.

\subsection{Weighted sparsity and weighted best approximation}

Let $\Lambda \subseteq \cF$ and $\w = (w_{\bm{\nu}})_{\bm{\nu} \in \Lambda}  > \bm{0}$ be positive weights. Given a set $S \subseteq \Lambda$, we define its weighted cardinality as
\bes{
|S|_{\w} : = \sum_{i \in S} w^2_i.
}
The following two definitions extend Definitions \ref{d:sparsity} and \ref{def:best_s_term} to the weighted setting:

\defn{[Weighted sparsity]
\label{def:weighted_sparsity}\index{sparsity!weighted}
Let $\Lambda \subseteq \cF$. A $\cV$-valued sequence $\bm{c} = (c_{\bm{\nu}})_{\bm{\nu} \in \Lambda}$ is \textit{weighted $(k,\bm{w})$-sparse} for some $k \geq 0$ and weights $\bm{w} = (w_{\bm{\nu}})_{\bm{\nu} \in \Lambda} > \bm{0}$ if 
\bes{
| \supp(\bm{c}) |_{\bm{w}} \leq k,
}
where $\supp(\bm{z}) = \{ \bm{\nu} : \nm{z_{\bm{\nu}}}_{\cV} \neq 0 \}$ is the \textit{support} of $\bm{z}$. The set of such vectors is denoted by $\Sigma_{k,\bm{w}}$.
}

\defn{[Weighted best $(k,\bm{w})$-term approximation error]
\label{def:best_s_term_weighted}
Let $\Lambda \subseteq \cF$ $0 < p \leq 2$, $\bm{w} > \bm{0}$, $\bm{c} \in \ell^p_{\bm{w}}(\Lambda;\cV)$ and $k \geq 0$. The \textit{$\ell^p_{\bm{w}}$-norm weighted best $(k,\bm{w})$-term approximation error} of $\bm{c}$ is
\be{
\label{weighted_best_s_term}
\sigma_{k}(\bm{c})_{p,\bm{w};\cV} = \min \left \{ \nm{\bm{c} - \bm{z}}_{p,\bm{w};\cV} : \bm{z} \in \Sigma_{k,\bm{w}} \right \}.
}
}
Notice that this is equivalent to
\be{
\label{sigma-k-w-equiv}
\sigma_{k}(\bm{c})_{p,\w;\cV} = \inf \left \{ \nm{\bm{c} - \bm{c}_S}_{p,\w ; \cV} : S \subseteq \Lambda,\ |S|_{\w} \leq k \right \}.
}
Here and elsewhere, for a sequence $\bm{c} = (c_{\bm{\nu}})_{\bm{\nu} \in \Lambda}$ and a set $S \subseteq \Lambda$, we define $\bm{c}_{S}$ as the sequence with $\bm{\nu}$th entry equal to $c_{\bm{\nu}}$ if $\bm{\nu} \in S$ and zero otherwise.

\subsection{The weighted robust null space property}
 
For the rest of this section, we consider the index set $\Lambda = \{1,\ldots,N\}$ for some $N \in \bbN$. Our analysis of the weighted SR-LASSO problem is presented in terms of the so-called weighted robust null space property.
Let $\w > \bm{0}$ and $k > 0$. A bounded linear operator $\bm{A} \in \cB(\cV^N,\cV^m)$ has the \textit{weighted robust Null Space Property (rNSP) over $\cV$ of order $(k,\bm{w})$  with constants $0<\rho<1$ and $\gamma>0$} if 
\begin{equation*}
\nm{\x_S}_{2;\cV} \leq  \dfrac{\rho \nm{\x_{S^c}}_{1,\w;\cV} }{\sqrt{k}}+\gamma \nm{\bm{A} \x}_{2;\cV} ,\quad \forall \x \in \cVN,
\end{equation*}
for any $S \subseteq [N]$ with $|S|_{\bm{w}} \leq k$.

Importantly, the weighted rNSP implies distance bounds in the $\ell^1_{\bm{w}}$- and $\ell^2$-norms. The following lemma is standard in the scalar case (see, e.g., \cite[Lem.\ 6.24]{adcock2021sparse}). We omit the proof of its extension to the Hilbert-valued case, since it follows almost exactly the same arguments. 

\begin{lemma}[Weighted rNSP implies $\ell^1_{\bm{w}}$ and $\ell^2$ distance bounds]\label{lemma-wrNSP-l1}
Suppose that $\bm{A} \in \cB(\cV^N,\cV^m)$ has the weighted rNSP over $\cV$ of order $(k,\w)$ with constants  $0<\rho<1$ and $\gamma>0$. Let $\x,\z \in \cVN$. Then 
\begin{equation}\label{w1nm-bound}
\nm{\z-\x}_{1,\w;\cV}\leq  C_1 \left( 2\sigma_{k}(\x)_{1,\w;\cV} +\nm{\z}_{1,\w;\cV}-\nm{\x}_{1,\w;\cV}\right)+ C_2\sqrt{k}  \nm{\bm{A}(\z-\x)}_{2;\cV},
\end{equation}
\begin{equation}\label{boundl2}
\begin{split}
\nm{\z-\x}_{2;\cV} \leq   & \dfrac{C_1'}{\sqrt{k}}  \left( 2\sigma_{k}(\x)_{1,\w;\cV} +\nm{\z}_{1,\w;\cV}-\nm{\x}_{1,\w;\cV}\right) +C_2' \nm{\bm{A}(\z-\x)}_{2;\cV},
\end{split}
\end{equation} 
where the constants are given by
\begin{equation*}
C_1 = \dfrac{(1+\rho)}{(1-\rho)}, \quad C_2 =  \dfrac{2\gamma }{(1-\rho)}, \quad C_1' = \left(\dfrac{(1+\rho)^2}{1-\rho} \right) \quad \text{ and } \quad C_2' = \left(\dfrac{(3+\rho) \gamma}{1-\rho}\right) .
\end{equation*}

\end{lemma}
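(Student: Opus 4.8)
The plan is to set $\bm{v} = \z - \x$ and reduce the whole statement to two scalar estimates on the nonnegative sequence of norms $(\nm{v_i}_{\cV})_{i=1}^N$. The only properties of $\cV$ used are the triangle inequality for $\nm{\cdot}_{\cV}$ and the fact that $\bm{A}$ acts entrywise on $\cV^N$; consequently every inequality below is obtained by applying a scalar inequality to $(\nm{v_i}_{\cV})$, which is exactly why the Hilbert-valued argument is verbatim the scalar one. I would first establish the $\ell^1_{\bm{w}}$ bound \R{w1nm-bound} and then bootstrap it to the $\ell^2$ bound \R{boundl2}.

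For \R{w1nm-bound}, let $S \subseteq [N]$ be the support of a weighted best $(k,\bm{w})$-term approximation of $\x$, so that $\nm{\x_{S^c}}_{1,\w;\cV} = \sigma_k(\x)_{1,\w;\cV}$ and $|S|_{\bm{w}} \leq k$ by \R{sigma-k-w-equiv}. Writing $\Xi := 2\sigma_k(\x)_{1,\w;\cV} + \nm{\z}_{1,\w;\cV} - \nm{\x}_{1,\w;\cV}$, splitting $\nm{\z}_{1,\w;\cV}$ over $S$ and $S^c$ and applying the triangle inequality termwise gives the cone bound
\begin{equation*}
\nm{\bm{v}_{S^c}}_{1,\w;\cV} \leq \nm{\bm{v}_{S}}_{1,\w;\cV} + \Xi .
\end{equation*}
Applying the weighted rNSP to $\bm{v}$ on $S$, and bounding $\nm{\bm{v}_{S}}_{1,\w;\cV} \leq \sqrt{|S|_{\bm{w}}}\,\nm{\bm{v}_{S}}_{2;\cV} \leq \sqrt{k}\,\nm{\bm{v}_{S}}_{2;\cV}$ via Cauchy--Schwarz applied to $(w_i)$ and $(\nm{v_i}_{\cV})$, yields $\nm{\bm{v}_{S}}_{1,\w;\cV} \leq \rho\,\nm{\bm{v}_{S^c}}_{1,\w;\cV} + \gamma\sqrt{k}\,\nm{\bm{A}\bm{v}}_{2;\cV}$. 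Substituting the cone bound, solving for $\nm{\bm{v}_{S}}_{1,\w;\cV}$, and using $\nm{\bm{v}}_{1,\w;\cV} \leq 2\nm{\bm{v}_{S}}_{1,\w;\cV} + \Xi$ then produces \R{w1nm-bound} with $C_1 = (1+\rho)/(1-\rho)$ and $C_2 = 2\gamma/(1-\rho)$.

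For \R{boundl2} I would choose a \emph{different} set: let $T \subseteq [N]$ be built greedily from the indices of largest norm $\nm{v_i}_{\cV}$ subject to $|T|_{\bm{w}} \leq k$. Splitting $\nm{\bm{v}}_{2;\cV} \leq \nm{\bm{v}_{T}}_{2;\cV} + \nm{\bm{v}_{T^c}}_{2;\cV}$, I would bound the head by the weighted rNSP (using $\nm{\bm{v}_{T^c}}_{1,\w;\cV} \leq \nm{\bm{v}}_{1,\w;\cV}$) and the tail by a weighted Stechkin estimate $\nm{\bm{v}_{T^c}}_{2;\cV} \leq \tfrac{1}{\sqrt{k}}\nm{\bm{v}}_{1,\w;\cV}$, giving $\nm{\bm{v}}_{2;\cV} \leq \tfrac{1+\rho}{\sqrt{k}}\nm{\bm{v}}_{1,\w;\cV} + \gamma\,\nm{\bm{A}\bm{v}}_{2;\cV}$. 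Inserting the $\ell^1_{\bm{w}}$ bound just proved and simplifying yields \R{boundl2} with $C_1' = (1+\rho)^2/(1-\rho)$ and $C_2' = (3+\rho)\gamma/(1-\rho)$.

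Finally, the obstacle. The substantive content sits in the scalar weighted theory I am quoting; the Hilbert-valued lift is routine since each step invokes only the triangle inequality and Cauchy--Schwarz on the magnitude sequence $(\nm{v_i}_{\cV})$. The one step demanding care is the weighted Stechkin estimate coupled with the weighted-cardinality choice of $T$: the truncation is governed by $|T|_{\bm{w}} \leq k$ rather than an integer cardinality, so one must check the boundary index does not degrade the constant $1/\sqrt{k}$ (here an assumption such as $\bm{w} \geq \bm{1}$, valid for the intrinsic weights \R{weights_def}, may be used). I would verify this estimate first, since the precise values of $C_1'$ and $C_2'$ hinge on it.
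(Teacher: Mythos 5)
Your proposal is correct and follows the standard route that the paper itself defers to (the paper omits the proof, pointing to the scalar case in \cite[Lem.\ 6.24]{adcock2021sparse}): a cone condition plus the rNSP applied on a best $(k,\w)$-term support gives \R{w1nm-bound}, and a head--tail split combined with a weighted Stechkin estimate, bootstrapped through \R{w1nm-bound}, gives \R{boundl2}; your constants $C_1$, $C_2$, $C_1'$, $C_2'$ all come out exactly as stated. The one step to repair is the construction of $T$ in the second part: selecting indices of largest $\nm{v_i}_{\cV}$ greedily under the budget $|T|_{\w}\leq k$ does not directly yield $\nm{\bm{v}_{T^c}}_{2;\cV}\leq k^{-1/2}\nm{\bm{v}}_{1,\w;\cV}$, and no hypothesis $\w\geq\bm{1}$ is needed. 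Instead take the thresholding set $T=\{i:\nm{v_i}_{\cV}/w_i>\alpha\}$ with $\alpha=\nm{\bm{v}}_{1,\w;\cV}/k$. Then $|T|_{\w}=\sum_{i\in T}w_i^2<\alpha^{-1}\sum_{i\in T}w_i\nm{v_i}_{\cV}\leq k$, so $T$ is admissible for the rNSP, while $\nm{\bm{v}_{T^c}}^2_{2;\cV}=\sum_{i\in T^c}\bigl(\nm{v_i}_{\cV}/w_i\bigr)\,w_i\nm{v_i}_{\cV}\leq\alpha\nm{\bm{v}}_{1,\w;\cV}=k^{-1}\nm{\bm{v}}^2_{1,\w;\cV}$, which is exactly the tail estimate you need and holds for arbitrary positive weights. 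With that substitution the rest of your argument, including the arithmetic producing $C_2'=(1+\rho)C_2+\gamma=(3+\rho)\gamma/(1-\rho)$, goes through verbatim in the Hilbert-valued setting.
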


Lemma \ref{lemma-wrNSP-l1} can be used to show distance bounds for exact minimizers of the Hilbert-valued weighted SR-LASSO problem
\be{
\label{SRLASSO-CS-sec}
\min_{\z \in \cV^N} \cG(\bm{z}),\qquad \cG(\bm{z}) : = \lambda \nm{\z}_{1,\w;\cV} + \nmu{\bm{A} \bm{z} - \bm{b}}_{2;\cV}.
}
Fortunately, it also implies bounds for approximate minimizers, such as those obtained by a finite number of steps of the primal-dual iteration.

\begin{lemma}
[Weighted rNSP implies error bounds for inexact minimizers]
\label{lemma-musuboptimal}
Suppose that $\bm{A} \in \cB(\cV^N,\cV^m)$ has the weighted rNSP  over $\cV$ of order $(k,\w)$ with constants  $0<\rho<1$ and $\gamma>0$. Let $\x \in \cVN$, $\b \in \cVM$ and $\bm{e} = \bm{A}\x-\b \in \cVM$, and consider the problem \R{SRLASSO-CS-sec} with parameter
\be{
\label{lambda-bound-rNSP-err}
0 < \lambda \leq  \dfrac{(1+ \rho)^2}{(3+\rho) \gamma } {k}^{-1/2}.
}
Then, for any $\tilde{\x} \in \cVN$, 
\eas{
\nm{\tilde{\x}-\x}_{1,\w;\cV} & \leq C_1 \left( 2 \sigma_{k}(\x)_{1,\w;\cV}
+\frac{\cG(\tilde{\x}) - \cG(\x) }{\lambda}  \right)+ \left( \dfrac{C_1}{\lambda} +C_2 \sqrt{k} \right) \nm{\e}_{2;\cV} ,
\\
\nm{\tilde{\x}-\x}_{2;\cV} & \leq  \dfrac{C_1'}{\sqrt{k}} \left( 2 {\sigma_{k}(\x)_{1,\w;\cV}} 
+\dfrac{\cG(\tilde{\x}) - \cG(\x) }{ \lambda} \right) + \left( \dfrac{C'_1}{\sqrt{k}\lambda} + C'_2 \right)\nm{\e}_{2;\cV} ,
}
where $C_1 $, $C_2 $, $C'_1$ and  $C'_2$ are as in Lemma \ref{lemma-wrNSP-l1}. 
\end{lemma}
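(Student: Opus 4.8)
The plan is to reduce the statement to Lemma \ref{lemma-wrNSP-l1} by applying its two distance bounds \eqref{w1nm-bound} and \eqref{boundl2} with the choice $\bm{z} = \tilde{\bm{x}}$, and then controlling the two quantities that appear on their right-hand sides—namely the difference of weighted $\ell^1$-norms $\nm{\tilde{\bm{x}}}_{1,\bm{w};\cV} - \nm{\bm{x}}_{1,\bm{w};\cV}$ and the residual $\nm{\bm{A}(\tilde{\bm{x}} - \bm{x})}_{2;\cV}$—using the definition of the objective functional $\cG$ and the triangle inequality. The key algebraic identity is that, since $\bm{e} = \bm{A}\bm{x} - \bm{b}$, we have the pointwise relation $\bm{A}(\tilde{\bm{x}} - \bm{x}) = (\bm{A}\tilde{\bm{x}} - \bm{b}) - \bm{e}$, so that by the triangle inequality in $\ell^2(\cdot;\cV)$,
\begin{equation*}
\nm{\bm{A}(\tilde{\bm{x}} - \bm{x})}_{2;\cV} \leq \nm{\bm{A}\tilde{\bm{x}} - \bm{b}}_{2;\cV} + \nm{\bm{e}}_{2;\cV}.
\end{equation*}

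The central step is to relate the two uncontrolled quantities to the objective gap $\cG(\tilde{\bm{x}}) - \cG(\bm{x})$. Writing out $\cG(\tilde{\bm{x}}) = \lambda \nm{\tilde{\bm{x}}}_{1,\bm{w};\cV} + \nm{\bm{A}\tilde{\bm{x}} - \bm{b}}_{2;\cV}$ and $\cG(\bm{x}) = \lambda \nm{\bm{x}}_{1,\bm{w};\cV} + \nm{\bm{e}}_{2;\cV}$, I would rearrange to obtain
\begin{equation*}
\lambda\left(\nm{\tilde{\bm{x}}}_{1,\bm{w};\cV} - \nm{\bm{x}}_{1,\bm{w};\cV}\right) + \nm{\bm{A}\tilde{\bm{x}} - \bm{b}}_{2;\cV} = \cG(\tilde{\bm{x}}) - \cG(\bm{x}) + \nm{\bm{e}}_{2;\cV}.
\end{equation*}
Since both terms on the left are controlled individually—the residual $\nm{\bm{A}\tilde{\bm{x}} - \bm{b}}_{2;\cV} \geq 0$ is nonnegative, and the $\ell^1$-difference can be isolated—this yields the two bounds
\begin{equation*}
\nm{\tilde{\bm{x}}}_{1,\bm{w};\cV} - \nm{\bm{x}}_{1,\bm{w};\cV} \leq \frac{\cG(\tilde{\bm{x}}) - \cG(\bm{x})}{\lambda} + \frac{\nm{\bm{e}}_{2;\cV}}{\lambda},
\end{equation*}
\begin{equation*}
\nm{\bm{A}\tilde{\bm{x}} - \bm{b}}_{2;\cV} \leq \cG(\tilde{\bm{x}}) - \cG(\bm{x}) + \nm{\bm{e}}_{2;\cV}.
\end{equation*}
The first follows by discarding the nonnegative residual, the second by discarding the $\ell^1$-difference term after noting it may be bounded below using the reverse triangle inequality, or more directly by observing that $\lambda(\nm{\tilde{\bm{x}}}_{1,\bm{w};\cV} - \nm{\bm{x}}_{1,\bm{w};\cV})$ can itself be negative but is dominated appropriately; I would verify carefully which sign conventions make each discard valid.

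I would then substitute these into \eqref{w1nm-bound} and \eqref{boundl2}. For the $\ell^1$-bound, substituting the $\ell^1$-difference estimate and the residual estimate into \eqref{w1nm-bound} and collecting the coefficients of $\nm{\bm{e}}_{2;\cV}$ gives exactly the factor $C_1/\lambda + C_2\sqrt{k}$, after the residual is combined via $\nm{\bm{A}(\tilde{\bm{x}}-\bm{x})}_{2;\cV} \leq \nm{\bm{A}\tilde{\bm{x}}-\bm{b}}_{2;\cV} + \nm{\bm{e}}_{2;\cV}$. The $\ell^2$-bound follows identically from \eqref{boundl2}, yielding the coefficient $C_1'/(\sqrt{k}\lambda) + C_2'$. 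The role of the hypothesis \eqref{lambda-bound-rNSP-err} on $\lambda$ is subtle: it guarantees that $C_2\sqrt{k} \geq C_1/\lambda$ type comparisons hold, or rather it ensures consistency so that the $C_1/\lambda$ term dominates correctly and no term of the wrong sign survives—I expect this constraint is what allows the residual contribution $\cG(\tilde{\bm{x}})-\cG(\bm{x})$ to enter with coefficient exactly $1/\lambda$ rather than a larger factor. The main obstacle I anticipate is bookkeeping the exact constants and confirming that the $\lambda$-constraint \eqref{lambda-bound-rNSP-err} is precisely what is needed to absorb cross-terms, rather than a mere sufficient condition; this requires tracking each inequality's direction with care, but it is routine once the structure above is in place.
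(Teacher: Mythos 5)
Your overall strategy coincides with the paper's: apply Lemma \ref{lemma-wrNSP-l1} with $\bm{z} = \tilde{\bm{x}}$, bound $\nm{\bm{A}(\tilde{\bm{x}}-\bm{x})}_{2;\cV} \leq \nm{\bm{A}\tilde{\bm{x}}-\bm{b}}_{2;\cV} + \nm{\bm{e}}_{2;\cV}$, and convert to the objective gap via the identity $\lambda(\nm{\tilde{\bm{x}}}_{1,\w;\cV}-\nm{\bm{x}}_{1,\w;\cV}) + \nm{\bm{A}\tilde{\bm{x}}-\bm{b}}_{2;\cV} = \cG(\tilde{\bm{x}})-\cG(\bm{x})+\nm{\bm{e}}_{2;\cV}$. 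However, the execution has a genuine gap at precisely the point you flagged as needing care. The auxiliary bound
\begin{equation*}
\nm{\bm{A}\tilde{\bm{x}}-\bm{b}}_{2;\cV} \leq \cG(\tilde{\bm{x}})-\cG(\bm{x})+\nm{\bm{e}}_{2;\cV}
\end{equation*}
is false in general: rearranging the identity gives $\nm{\bm{A}\tilde{\bm{x}}-\bm{b}}_{2;\cV} = \cG(\tilde{\bm{x}})-\cG(\bm{x})+\nm{\bm{e}}_{2;\cV} - \lambda\left(\nm{\tilde{\bm{x}}}_{1,\w;\cV}-\nm{\bm{x}}_{1,\w;\cV}\right)$, and the discarded term is \emph{positive} whenever $\nm{\tilde{\bm{x}}}_{1,\w;\cV} < \nm{\bm{x}}_{1,\w;\cV}$ --- which there is no reason to exclude, and which is in fact the typical situation when $\tilde{\bm{x}}$ approximately minimizes an $\ell^1_{\bm{w}}$-penalized objective. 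Moreover, even if both of your separate estimates held, substituting them independently would make $\cG(\tilde{\bm{x}})-\cG(\bm{x})$ appear twice (once with coefficient $C_1/\lambda$ and once with $C_2\sqrt{k}$) and would accumulate an extra noise contribution, so the stated constants would not be recovered.

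The fix --- and the paper's actual argument --- is to not split the identity. After the rNSP bound and the triangle inequality you have $C_1\left(\nm{\tilde{\bm{x}}}_{1,\w;\cV}-\nm{\bm{x}}_{1,\w;\cV}\right) + C_2\sqrt{k}\,\nm{\bm{A}\tilde{\bm{x}}-\bm{b}}_{2;\cV} + C_2\sqrt{k}\,\nm{\bm{e}}_{2;\cV}$. The hypothesis \R{lambda-bound-rNSP-err} implies $\lambda \leq \min\{C_1/C_2,\, C_1'/C_2'\}\, k^{-1/2}$ (one checks $C_1'/C_2' \leq C_1/C_2$ for $0<\rho<1$), i.e.\ $C_2\sqrt{k} \leq C_1/\lambda$ --- note this is the reverse of the comparison you wrote. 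Applying this only to the nonnegative residual term gives $C_2\sqrt{k}\,\nm{\bm{A}\tilde{\bm{x}}-\bm{b}}_{2;\cV} \leq (C_1/\lambda)\nm{\bm{A}\tilde{\bm{x}}-\bm{b}}_{2;\cV}$, after which the two terms recombine as $(C_1/\lambda)\bigl[\lambda(\nm{\tilde{\bm{x}}}_{1,\w;\cV}-\nm{\bm{x}}_{1,\w;\cV}) + \nm{\bm{A}\tilde{\bm{x}}-\bm{b}}_{2;\cV}\bigr] = (C_1/\lambda)\bigl[\cG(\tilde{\bm{x}})-\cG(\bm{x})+\nm{\bm{e}}_{2;\cV}\bigr]$, yielding the claimed bound with noise coefficient $C_1/\lambda + C_2\sqrt{k}$; the $\ell^2$ estimate is identical with $C_1'$, $C_2'$.
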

\begin{proof}
First notice that $C'_1/C'_2 \leq C_1/C_2$ since $0 < \rho < 1$, where $C_1 $, $C_2 $, $C'_1$ and  $C'_2$ are as in Lemma \ref{lemma-wrNSP-l1}. Hence the condition on $\lambda$ implies that
\begin{equation}
\lambda \leq \min \lbrace C_1/C_2, C_1'/C_2'\rbrace k ^{-1/2},
\end{equation}
Using this lemma and this bound, we deduce that
\begin{equation*}
\nm{\tilde{\x}-\x}_{1,\w;\cV}\leq  2C_1 \sigma_{k}(\x)_{1,\w;\cV} + \dfrac{C_1}{\lambda}\left( \lambda \nm{\tilde{\x}}_{1,\w;\cV}+\nm{\bm{A}\tilde{\x}-\b}_{2;\cV}- \lambda\nm{\x}_{1,\w;\cV}\right)+C_2 \sqrt{K} \nm{\e}_{2;\cV}.
\end{equation*}
The definition of $\cG$ in \eqref{SRLASSO-CS-sec} gives
\begin{equation*}
\nm{\tilde{\x}-\x}_{1,\w;\cV}\leq  2C_1 \sigma_{k}(\x)_{1,\w;\cV} + \dfrac{C_1}{\lambda}\left(  \cG( \tilde{\x}) -\cG(  {\x})+ \nm{\e}_{2;\cV} \right)+C_2 \sqrt{k} \nm{\e}_{2;\cV},
\end{equation*}
which is the first result. The second follows in an analogous manner.
\end{proof}

\subsection{The weighted rNSP and weighted restricted isometry property}

In the next section, we give explicit conditions in terms of $m$ under which the measurement matrices \R{def-measMatrix} satisfy the weighted rNSP over $\cV$. It is well known that showing the (weighted) rNSP directly can be difficult. In the classical, scalar setting, this is overcome by showing that the (weighted) rNSP is implied by the so-called (weighted) restricted isometry property. Hence, in this subsection, we first introduced this property and describe its relation to the (weighted) rNSP.

Let $\w > \bm{0}$ and $k > 0$. A bounded linear operator $\bm{A} \in \cB(\cV^N,\cV^m)$ has the \textit{weighted Restricted Isometry Property (RIP)} over $\cV$ of order $(k,\w)$ if there exists a constant $0<\delta <1$ such that
\begin{equation}
(1-\delta) \nm{\z}_{2;\cV}^2 \leq \nm{\bm{A} \z}_{2;\cV}^2 \leq (1+\delta) \nm{\z}_{2;\cV}^2,\quad \forall \z \in \Sigma_{k,\w} \subseteq \cV^N.
\end{equation}
 The smallest constant such that this property holds is called  the $(k,\bm{w})$th \textit{weighted Restricted Isometry Constant (wRIC)} of $\bm{A}$, and is denoted as $\delta_{k,\w}$.

It is first convenient to show an equivalence between the scalar weighted RIP over $\bbC$ and the Hilbert-valued weighted RIP over $\cV$.

\begin{lemma}
[weighted RIP over $\bbC$ is equivalent to the weighted RIP over $\cV$]
\label{l:from_R_to_V_rNSP}
Let $\w > \bm{0}$, $k > 0$ and $\bm{A} = (a_{ij})^{m,N}_{i,j=1} \in \bbC^{m \times N}$ be a matrix. Then $\bm{A}$ satisfies the weighted RIP over  $\bbC$  of order $(k,\w)$ with constant  $0<\delta<1$ if and only if the corresponding bounded linear operator  $\bm{A} \in \cB (\cV^N ,\cV^m)$ defined by
\bes{
\x = (x_i)^{N}_{i=1} \in \cV^N \mapsto \bm{A}\bm{x} : = \left ( \sum^{N}_{i=1} a_{ij} x_j \right )^{m}_{i=1} \in \cV^m,
}
satisfies the weighted RIP over $\cV$ of order $(k,\w)$ with the same constant $\delta$.
\end{lemma}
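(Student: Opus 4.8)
The plan is to exploit the separability of $\cV$ by fixing an orthonormal basis and decomposing the Hilbert-valued action of $\bm{A}$ into countably many scalar copies, on each of which the scalar weighted RIP applies term by term. The two Parseval identities this produces convert the scalar isometry bounds into Hilbert-valued ones with no change in the constant, and a simple lifting argument handles the converse direction.

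First I would fix an orthonormal basis $\{ \phi_l \}_{l \in \bbN}$ of $\cV$ (possible by separability). Given $\bm{z} = (z_j)^N_{j=1} \in \cV^N$, expand each entry as $z_j = \sum_l z_{jl} \phi_l$ with scalar coefficients $z_{jl} = \ip{z_j}{\phi_l}_{\cV} \in \bbC$, and define $\bm{z}^{(l)} = (z_{jl})^N_{j=1} \in \bbC^N$. Applying Parseval's identity entrywise gives
\bes{
\nm{\bm{z}}^2_{2;\cV} = \sum^N_{j=1} \nm{z_j}^2_{\cV} = \sum^N_{j=1} \sum_l |z_{jl}|^2 = \sum_l \nm{\bm{z}^{(l)}}^2_{2}.
}
Since $\bm{A}$ acts coordinatewise, $(\bm{A} \bm{z})_i = \sum^N_{j=1} a_{ij} z_j = \sum_l (\bm{A} \bm{z}^{(l)})_i \, \phi_l$, and a second application of Parseval yields $\nmu{\bm{A} \bm{z}}^2_{2;\cV} = \sum_l \nmu{\bm{A} \bm{z}^{(l)}}^2_2$. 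The essential structural observation is that $\supp(\bm{z}^{(l)}) \subseteq \supp(\bm{z})$ for every $l$, because $z_j = 0$ in $\cV$ forces $z_{jl} = 0$ for all $l$; consequently $| \supp(\bm{z}^{(l)}) |_{\w} \leq | \supp(\bm{z}) |_{\w}$, so if $\bm{z} \in \Sigma_{k,\w}$ over $\cV$ then each $\bm{z}^{(l)} \in \Sigma_{k,\w}$ over $\bbC$.

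For the forward direction, assume the scalar weighted RIP over $\bbC$ with constant $\delta$. Fix $\bm{z} \in \Sigma_{k,\w} \subseteq \cV^N$; by the observation above, each $\bm{z}^{(l)}$ lies in $\Sigma_{k,\w} \subseteq \bbC^N$, so $(1-\delta) \nmu{\bm{z}^{(l)}}^2_2 \leq \nmu{\bm{A} \bm{z}^{(l)}}^2_2 \leq (1+\delta) \nmu{\bm{z}^{(l)}}^2_2$ for every $l$. Summing these inequalities over $l$ and inserting the two Parseval identities gives exactly $(1-\delta) \nm{\bm{z}}^2_{2;\cV} \leq \nmu{\bm{A} \bm{z}}^2_{2;\cV} \leq (1+\delta) \nm{\bm{z}}^2_{2;\cV}$, which is the weighted RIP over $\cV$ with the same $\delta$. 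For the converse, assume the weighted RIP over $\cV$ with constant $\delta$, fix any scalar $\bm{x} \in \Sigma_{k,\w} \subseteq \bbC^N$, choose a unit vector $\phi \in \cV$ (which exists since $\cV \neq \{ 0 \}$), and form the lift $\bm{z} = (x_j \phi)^N_{j=1} \in \cV^N$. Then $\supp(\bm{z}) = \supp(\bm{x})$, so $\bm{z} \in \Sigma_{k,\w}$ over $\cV$, and both norms are preserved, $\nm{\bm{z}}_{2;\cV} = \nm{\bm{x}}_2$ and $\nmu{\bm{A} \bm{z}}_{2;\cV} = \nmu{\bm{A} \bm{x}}_2$, since $(\bm{A} \bm{z})_i = (\bm{A} \bm{x})_i \, \phi$. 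Applying the $\cV$-RIP bounds to $\bm{z}$ therefore recovers the scalar bounds for $\bm{x}$ with the same $\delta$.

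Because both implications hold for a fixed $\delta$, the two properties are equivalent and, in particular, share the same smallest admissible constant, i.e.\ the wRIC over $\bbC$ equals the wRIC over $\cV$. The only points requiring any care are the convergence of the infinite sums over $l$ in the infinite-dimensional case, which is automatic from Parseval, and the support-containment claim $\supp(\bm{z}^{(l)}) \subseteq \supp(\bm{z})$; neither presents a genuine obstacle, so the argument is essentially a direct diagonalization of the scalar estimate across an orthonormal basis.
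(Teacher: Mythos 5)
Your proof is correct and follows essentially the same route as the paper: expand each Hilbert-valued entry in an orthonormal basis of $\cV$, note the support containment of the scalar slices, apply the scalar weighted RIP slice-by-slice, and sum via Parseval; the converse is the same lifting argument. (Your choice of a single fixed unit vector $\phi$ for the lift is in fact the cleaner one, since it makes the identity $\nmu{\bm{A}\bm{z}}_{2;\cV}=\nmu{\bm{A}\bm{x}}_2$ immediate.)
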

\begin{proof}
We follow similar arguments to \cite[Rmk.\ 3.5]{dexter2019mixed}. First, we rewrite the equivalence as follows:
\begin{equation}\label{wRIP_V}
(1-\delta)\nm{\x}_{2;\cV}^2 \leq \nm{\bm{A}\x}_{2;\cV}^2 \leq (1+\delta) \nm{\x}_{2;\cV}^2, \qquad \forall \x \in \cV^N, |\supp(\x)|_{\w} \leq k,
\end{equation}
if and only if 
\begin{equation}\label{wRIP_R}
(1-\delta)\nm{\x}_{2}^2 \leq \nm{\bm{A}\x}_{2}^2 \leq (1+\delta) \nm{\x}_{2}^2, \qquad \forall \x \in \bbC^N, |\supp(\x)|_{\w} \leq k.
\end{equation}
Suppose that \eqref{wRIP_R} holds. Let $\x = (x_j)^{N}_{i=1} \in \cV^N$ be $(k,\w)$-sparse and $\lbrace \phi_i\rbrace_{i}$ be an orthonormal basis of $\cV$. Then, for each $i \in [N]$, $x_i \in \cV$ can be uniquely represented as
\begin{equation*}
x_i= \sum_{j } \alpha_{ij} \phi_j, \quad \alpha_{ij} \in \bbC.
\end{equation*}
Let $\bm{x}_j = (\alpha_{ij})^{N}_{i=1} \in \bbC^N$. Then $\supp(\bm{x}_j) \subseteq \supp(\bm{x})$ and therefore $\bm{x}_j$ is $(k,\bm{w})$-sparse. Hence \R{wRIP_R} gives
\be{
\label{wRIP-cols}
(1-\delta)\nmu{\x_j}_{2}^2 \leq \nmu{\bm{A} \x_j}_{2}^2 \leq (1+\delta) \nmu{\x_j}_{2}^2 .
}
Now observe that
\bes{
\sum_{j } \nm{\bm{x}_j}^2_2 = \sum^{N}_{i=1} \sum_{j } | \alpha_{ij} |^2 = \sum^{N}_{i=1} \nm{x_i}^2_{\cV} = \nm{\bm{x}}^2_{2 ; \cV},
}
and
\bes{
\sum_{j } \nm{\bm{A} \bm{x}_j}^2_2 = \sum_{j } \sum^{m}_{i=1} \left | \sum^{N}_{k = 1} a_{ik} \alpha_{kj} \right |^2 = \sum^{m}_{i=1} \nm{\sum^{N}_{k=1} a_{ik} x_k }^2_{\cV} = \nm{\bm{A} \bm{x}}^2_{2 ; \cV}.
}
Summing \R{wRIP-cols} over $j $, we deduce that \R{wRIP_V} holds.

Conversely, suppose that \R{wRIP_V} holds and let $\bm{z} = (z_i)^{N}_{i=1} \in \bbC^N$ with $|\supp(\z)|_{\w} \leq k$. Define $\bm{x} = (z_i \phi_i ) \in \cV^N$ and notice that $\nm{\bm{x}}_{2;\cV} = \nm{\bm{z}}_2$ and $\nm{\bm{A} \bm{x}}_{2;\cV} = \nm{\bm{A} \bm{z}}_{2}$. Since $\supp(\x) = \supp(\bm{z})$ and $|\supp(\z)|_{\w} \leq k$, we now apply \R{wRIP_V} to deduce that $(1-\delta) \nm{\bm{z}}^2_2 \leq \nm{\bm{A} \bm{z}}^2_2 \leq (1+\delta) \nm{\bm{z}}^2_2$. We conclude that \R{wRIP_R} holds.
\end{proof}

The following result shows that the weighted RIP is a sufficient condition for the weighted rNSP. This result is well known in the scalar-valued case (see, e.g., \cite[Theorem 6.26]{adcock2021sparse}). Since its extension to the Hilbert-valued case is straightforward, we omit the proof.
\begin{lemma}[weighted RIP implies the weighted rNSP]
\label{implies-RIP} Let $\w > \bm{0}$, $k > 0$ and suppose that $\bm{A} \in \bbC^{m\times N}$ has the weighted RIP over $\bbC$ of order $(2k,\w)$ with constant $\delta_{2k,\w}< (2 \sqrt{2}-1)/7$. Then $\bm{A}$ has the weighted rNSP of order $(k,\w)$ over  $\cV$ with constants $\rho=2 \sqrt{2} \delta_{2k,\w} / (1-\delta_{2k,\w})$ and $\gamma=\sqrt{1+\delta_{2k,\w}} / (1-\delta_{2k,\w})$. 
\end{lemma}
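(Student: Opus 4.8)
The statement to prove is Lemma~\ref{implies-RIP}: that the weighted RIP over $\bbC$ of order $(2k,\w)$ with constant $\delta_{2k,\w} < (2\sqrt{2}-1)/7$ implies the weighted rNSP over $\cV$ with the stated constants $\rho$ and $\gamma$.

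\textbf{Plan.} The paper explicitly says the scalar-valued version of this implication is standard (citing \cite[Thm.~6.26]{adcock2021sparse}) and that the Hilbert-valued extension is ``straightforward,'' so my strategy is to reduce to the scalar case rather than redo the whole RIP-implies-rNSP argument from scratch. The key observation is that Lemma~\ref{l:from_R_to_V_rNSP}, already proved in the excerpt, gives an \emph{equivalence} between the weighted RIP over $\bbC$ and the weighted RIP over $\cV$ with the \emph{same} constant $\delta$. So the first step is simply to invoke Lemma~\ref{l:from_R_to_V_rNSP} to upgrade the hypothesis ``$\bm{A}$ has weighted RIP over $\bbC$ of order $(2k,\w)$ with constant $\delta_{2k,\w}$'' into ``$\bm{A}$, viewed as an operator in $\cB(\cV^N,\cV^m)$, has weighted RIP over $\cV$ of order $(2k,\w)$ with the same constant $\delta_{2k,\w}$.'' This costs nothing and is the clean entry point.

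\textbf{Main argument.} With the RIP now stated over $\cV$, I would carry out the standard RIP-implies-rNSP chain, but working throughout with the $\nm{\cdot}_{2;\cV}$ and $\nm{\cdot}_{1,\w;\cV}$ norms in place of their scalar counterparts. The backbone is the same as in the scalar proof: fix $S \subseteq [N]$ with $|S|_{\w} \leq k$ and an arbitrary $\bm{x} \in \cV^N$; partition the complement $S^c$ into blocks $S_1, S_2, \ldots$ according to a weighted sorting of the entries (so that each $S \cup S_j$ has weighted cardinality at most $2k$, allowing the order-$(2k,\w)$ RIP to apply); use the weighted RIP lower bound on the $(2k,\w)$-sparse vector $\bm{x}_{S \cup S_1}$, and the weighted RIP ``inner product / near-orthogonality'' estimate to control the cross terms $\ip{\bm{A}\bm{x}_{S\cup S_1}}{\bm{A}\bm{x}_{S_j}}_{2;\cV}$. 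The one ingredient that must be checked in the Hilbert-valued setting is that the polarization/near-orthogonality bound $|\ip{\bm{A}\bm{u}}{\bm{A}\bm{v}}_{2;\cV}| \leq \delta_{2k,\w}\nm{\bm{u}}_{2;\cV}\nm{\bm{v}}_{2;\cV}$ for disjointly supported $(k,\w)$-sparse $\bm{u},\bm{v}$ still holds; this follows verbatim from the RIP over $\cV$ by the same parallelogram-identity computation, since $\cV$ is a complex inner-product space and all the manipulations are bilinear in the $\cV$-inner product. Tracking the constants through the weighted block decomposition then yields exactly $\rho = 2\sqrt{2}\,\delta_{2k,\w}/(1-\delta_{2k,\w})$ and $\gamma = \sqrt{1+\delta_{2k,\w}}/(1-\delta_{2k,\w})$, and the condition $\delta_{2k,\w} < (2\sqrt{2}-1)/7$ is precisely what forces $\rho < 1$.

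\textbf{Expected obstacle.} There is essentially no deep obstacle here; the entire content is bookkeeping that the scalar proof transfers coordinatewise-in-$\cV$. The only point requiring genuine (if routine) care is verifying that the weighted-sparsity block decomposition of $S^c$ behaves correctly: because ``sparsity'' is measured by the \emph{weighted} cardinality $|S|_{\w} = \sum_{i\in S} w_i^2$ rather than ordinary cardinality, the greedy partitioning of $S^c$ into blocks of weighted size at most $k$ is slightly more delicate than the unweighted case (one must handle the possibility that individual weights $w_i^2$ are large, and the sorting is by $\nm{x_i}_{\cV}/w_i$ or a comparable weighted ratio). This is exactly the subtlety already resolved in the scalar weighted theory of \cite[Thm.~6.26]{adcock2021sparse}, and since Lemma~\ref{l:from_R_to_V_rNSP} shows the RIP constants are identical over $\bbC$ and $\cV$, the weighted combinatorics are unchanged. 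Consequently the cleanest writeup is to state that, by Lemma~\ref{l:from_R_to_V_rNSP}, the hypothesis reduces to the RIP over $\cV$, after which the proof follows the scalar argument of \cite[Thm.~6.26]{adcock2021sparse} line by line with $|\cdot|$ replaced by $\nm{\cdot}_{\cV}$ and scalar inner products replaced by $\ip{\cdot}{\cdot}_{\cV}$ — which is precisely why the authors elected to omit the details.
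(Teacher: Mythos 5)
Your proposal is correct and takes essentially the same route the paper intends: the paper omits the proof entirely, citing the standard scalar weighted result and asserting the Hilbert-valued extension is straightforward, and your argument—pass to the weighted RIP over $\cV$ via Lemma \ref{l:from_R_to_V_rNSP} and then rerun the scalar weighted RIP-implies-rNSP chain with $\nm{\cdot}_{\cV}$ and $\ip{\cdot}{\cdot}_{\cV}$ in place of their scalar counterparts, checking that the polarization/near-orthogonality estimate survives—is exactly that extension. The constants and the threshold $\delta_{2k,\bm{w}} < (2\sqrt{2}-1)/7$ come out as in the scalar case, so there is no gap.
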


\section{Error bounds for polynomial approximation via the Hilbert-valued, weighted SR-LASSO}\label{s:poly-app-CS}
 
Having developed the necessary tools for compressed sensing in the Hilbert-valued setting, we now specialize to the case introduced in \S \ref{SRecovery} of polynomial approximation via the Hilbert-valued, weighted SR-LASSO problem \R{wsr-LASSO}. Our main results in this section, Theorems \ref{t:main-res-map-alg_inex}--\ref{t:main-res-map-alg_inex_exponential}, yield error bounds for (inexact) minimizers of this problem in terms of the best polynomial approximation error, the Hilbert space discretization error and the noise.

\subsection{The weighted RIP for the polynomial approximation problem}

In this subsection, we assert conditions on $m$ under which the relevant measurement matrix satisfies the weighted RIP. As in \S \ref{SRecovery}, we let $\{ \Psi_{\bm{\nu}} \}_{\bm{\nu} \in \cF} \subset L^2_{\varrho}(\cU)$ be either the tensor Chebyshev or Legendre polynomial basis, 
\be{
\label{Lambda-choice}
\Lambda = \begin{cases} \Lambda^{\mathsf{HC}}_{n,d} & d < \infty, \\ \Lambda^{\mathsf{HCI}}_{n} & d = \infty, \end{cases}
}
be the hyperbolic cross index set and draw $\bm{y}_1,\ldots,\bm{y}_m$ independently and identically from the measure $\varrho$. Then we define the measurement matrix $\bm{A}$ exactly as in \R{def-measMatrix}.

\begin{lemma}
[Weighted RIP for Chebyshev and Legendre polynomials]
\label{l:LegMat_RIP}
Let $\{ \Psi_{\bm{\nu}} \}_{\bm{\nu} \in \bbN^d_0}$ be the orthonormal tensor Legendre  or {Chebyshev} polynomial basis of $L^2_{\varrho}(\cU)$, $\Lambda$ be as in \R{Lambda-choice} for some $n \geq 1$ and $\bm{y}_1,\ldots,\bm{y}_m$ be drawn independently and identically from the measure $\varrho$. Let $0 < \epsilon < 1$, $k > 0$, $\u$ be the intrinsic weights \eqref{weights_def},  
\[
L' = L'(k,n,d,\epsilon):= 
\begin{cases} 
\log(2k) \cdot \left ( \log(2 k) \cdot \min \{ \log(n) + d , \log(\E d) \cdot \log(2n) \} + \log(\epsilon^{-1}) \right )  & d < \infty,
\\
\log(2k) \cdot \left ( \log(2 k) \cdot \log^2(2n) + \log(\epsilon^{-1}) \right )  & d = \infty,
\end{cases}
\]
and suppose that
\be{
\label{m-cond-for-wRIP}
m \geq c \cdot k \cdot L'(k,n,d,\epsilon),
}
where  $c>0$ is a universal constant.
Then, with probability at least $1-\epsilon$, the matrix $\bm{A}$ defined in \R{def-measMatrix} satisfies the weighted RIP of order $(k,\bm{u})$ with constant $\delta_{k,\u} \leq 1/4$.
\end{lemma}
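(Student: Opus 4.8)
The plan is to reduce the Hilbert-valued claim to a scalar one, identify the polynomial family as a bounded orthonormal system adapted to the intrinsic weights, invoke the known weighted restricted isometry estimate for such systems, and finally convert the generic dimension dependence into the explicit log factors defining $L'$ by means of the cardinality bounds \R{N_bound} for the hyperbolic cross.

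\textbf{Reduction to the scalar case.} First I would apply Lemma \ref{l:from_R_to_V_rNSP}: the operator $\bm{A}\in\cB(\cV^N,\cV^m)$ has the weighted RIP over $\cV$ of order $(k,\bm{u})$ with constant $\delta$ if and only if the matrix $\bm{A}\in\bbC^{m\times N}$ has the weighted RIP over $\bbC$ of the same order and constant. Hence $\cV$ plays no role and it suffices to establish the statement for the scalar matrix \R{def-measMatrix}.

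\textbf{Weighted BOS structure and the probabilistic estimate.} Next I would observe that $\{\Psi_{\bm{\nu}}\}_{\bm{\nu}\in\Lambda}$ is orthonormal in $L^2_{\varrho}(\cU)$ and that the intrinsic weights satisfy $u_{\bm{\nu}}=\nm{\Psi_{\bm{\nu}}}_{L^{\infty}(\cU)}$ with equality, so the system is a bounded orthonormal system adapted to $\bm{u}$. Since the rows of $\sqrt{m}\,\bm{A}$ are i.i.d.\ copies of $(\Psi_{\bm{\nu}_j}(\bm{y}))_{j=1}^N$ with $\bm{y}\sim\varrho$, one has $\mathbb{E}[\bm{A}^*\bm{A}]=\bm{I}$. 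I would then invoke the established weighted RIP estimate for such systems (see \cite{rauhut2016interpolation,chkifa2018polynomial} and \cite[Chpt.\ 7]{adcock2021sparse}), which guarantees $\delta_{k,\bm{u}}\le \delta$ with probability at least $1-\epsilon$ provided
\[
m \;\gtrsim\; \delta^{-2}\,k\,\big(\log^2(2k)\,\log(N) + \log(2k)\,\log(\epsilon^{-1})\big).
\]
Taking the target constant $\delta=1/4$ absorbs the $\delta^{-2}$ factor into the universal constant $c$ in \R{m-cond-for-wRIP}.

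\textbf{Controlling the log factors.} It then remains to replace $\log N$ by the explicit expressions in $L'$. For $d<\infty$, taking logarithms in the cardinality bounds \R{N_bound} for $N=|\Lambda^{\mathsf{HC}}_{n,d}|$ yields $\log N \lesssim \min\{\log(n)+d,\ \log(\E d)\log(2n)\}$. For $d=\infty$ I would use that $\Lambda^{\mathsf{HCI}}_{n}$ is isomorphic to $\Lambda^{\mathsf{HC}}_{n,n}$, so the second bound in \R{N_bound} gives $\log N \lesssim \log^2(2n)$. Substituting these into the displayed condition reproduces exactly the stated $L'(k,n,d,\epsilon)$ in the two cases, which completes the proof.

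\textbf{Main obstacle.} The analytic heart of the argument is the probabilistic estimate of the previous step. If one does not simply cite it, proving it requires a chaining/Dudley-type bound for the supremum of the associated chaos process over the weighted-$(k,\bm{u})$-sparse secant set, combined with a symmetrization and concentration argument. The delicate point is that the weighted sparsity model — tied to lower (or anchored) sets through the intrinsic weights — must be exploited to keep the ambient-dimension dependence at the level of $\log N$ rather than $N$, and to obtain the sharp $\log^2(2k)$ power; the sampling-measure dependence (uniform versus Chebyshev) enters only through the constants and is invisible at this resolution. Once this estimate is available, the conversion of $\log N$ into the min-expression via \R{N_bound} is routine bookkeeping.
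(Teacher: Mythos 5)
Your proposal is correct and follows essentially the same route as the paper: identify $\bm{A}$ as a measurement matrix for a weighted bounded orthonormal system, cite the known weighted RIP sample-complexity estimate (fixing $\delta = 1/4$ to absorb the $\delta$-dependence into the universal constant), and then bound $\log(2N)$ via the hyperbolic cross cardinality estimates \R{N_bound} together with $|\Lambda^{\mathsf{HCI}}_n| = |\Lambda^{\mathsf{HC}}_{n,n}|$ to recover $L'$. The only difference is that the paper states and proves this lemma purely for the scalar matrix and defers the passage to the Hilbert-valued setting (your first reduction step, via Lemma \ref{l:from_R_to_V_rNSP}) to where the lemma is applied, which is immaterial.
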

\begin{proof}
The proof uses ideas that are now standard. The matrix $\bm{A}$ is a specific type of measurement matrix associated to the \textit{bounded orthonormal system} $\{ \Psi_{\bm{\nu}} \}_{\bm{\nu} \in \Lambda}$ (see, e.g., \cite[Sec.\ 6.4.3]{adcock2021sparse} or \cite[Chpt.\ 12]{foucart2013mathematical}). Such a matrix satisfies the weighted RIP of order $k > 0$ with constant $\delta_{k,\bm{u}} \leq \delta$ whenever
\be{
\label{BOS-wRIP}
m \geq c \cdot k \cdot \delta^{-2} \cdot \log \left ( \frac{2 k}{\delta^2} \right ) \cdot \left [ \frac{1}{\delta^4} \log \left ( \frac{2 k}{\delta^2} \right ) \cdot \log(2N) + \frac{1}{\delta} \log(\epsilon^{-1}) \right],
}
where $c > 0$ is a universal constant.
See, e.g., \cite[Thm.\ 6.27 and eqn.\ (6.36)]{adcock2021sparse} (this result is based on \cite{chkifa2018polynomial}). To obtain the result, we set $\delta = 1/4$. Hence \R{BOS-wRIP} is implied by
\bes{
m \geq c \cdot k \cdot \log(2 k) \cdot \left [ \log(2 k) \cdot \log(2 N) + \log(\epsilon^{-1}) \right ],
}
for a potentially different universal constant $c$. Next, we use \R{N_bound} (and recall that $|\Lambda^{\mathsf{HCI}}_n| = |\Lambda^{\mathsf{HC}}_{n,n} |$) to estimate
\bes{
\log(2 N) \leq c \begin{cases} \min \{ d + \log(n) , \log(2 d) \cdot \log(2 n) \} & d < \infty,
\\
\log^2(2 n) & d = \infty,
\end{cases}
}
for a potentially different universal constant. The result now follows after substituting this into the previous expression.
\end{proof}

Note that the choice of $1/4$ in this lemma is arbitrary. Any value less than $(2\sqrt{2}-1)/7 \approx 0.261$ (see Lemma \ref{implies-RIP}) will suffice.

\subsection{Bounds for polynomial approximations obtained as inexact minimizers}

We now present the main results of this section. These three results provide error bounds for polynomial approximations obtained as (inexact) minimizers to the weighted SR-LASSO problem \R{wsr-LASSO}. Each theorem corresponds to one of the three scenarios in our main results in \S \ref{Sec_Main_results}. Hence, we label them accordingly as algebraic and finite dimensional, algebraic and infinite dimensional, and exponential. In order to state these results, we now define some additional notation. Given $f \in L^2_{\varrho}(\cU;\cV)$ and $\Lambda \subseteq \cF$, where $\cF$ is as in \R{Fdef-1}--\R{Fdef-2}, we let
\bes{
E_{\Lambda,2}(f) = \nmu{f - f_{\Lambda}}_{L^2_{\varrho}(\cU ; \cV)},\qquad E_{\Lambda,\infty}(f) =\nmu{f - f_{\Lambda}}_{L^{\infty}(\cU ; \cV)},
}
where $f_{\Lambda}$ is as in \R{f_exp_trunc}, and, given a subspace $\cV_h \subseteq L^2_{\varrho}(\cU ; \cV)$, we let
\bes{
E_{h,\infty}(f) = \nmu{f - \cP_h(f)}_{L^{\infty}(\cU ; \cV)},
}
where $\cP_h(f)$ is as in \R{Phf-def}.

\thm{
[Error bounds for inexact minimizers, algebraic and finite-dimensional case]
\label{t:main-res-map-alg_inex}
Let $d \in \bbN$, $m \geq 3$, $ 0 < \epsilon < 1$, $\{ \Psi_{\bm{\nu}} \}_{\bm{\nu} \in \bbN^d_0} \subset L^2_{\varrho}(\cU)$ be either the orthonormal Chebyshev or Legendre basis, $\cV_h \subseteq L^2_{\varrho}(\cU)$ be a subspace of $L^2_{\varrho}(\cU)$ and $\Lambda = \Lambda_{n,d}^{\mathsf{HC}}$ be the hyperbolic cross index set with $n= \lceil m/L\rceil$ where  $L = L(m,d,\epsilon)$ is as in \eqref{Ldef}. Let $f \in L^2_{\varrho}(\cU;\cV)$, draw $\bm{y}_1,\ldots,\bm{y}_m$ randomly and independently according to $\varrho$ and suppose that $\bm{A}$, $\bm{b}$ and $\bm{e}$ are as in \R{def-measMatrix} and \R{def-measVec}. Consider the Hilbert-valued, weighted SR-LASSO problem \R{wsr-LASSO} with weights $\bm{w} = \bm{u}$ as in \R{weights_def} and $\lambda = (4 \sqrt{m/L})^{-1}$. Then there exists universal constants $c_0, c_1,c_2 \geq 1$ such that the following holds with probability at least $1-\epsilon$. Any $\tilde{\bm{c}} = (\tilde{c}_{\bm{\nu}})_{\bm{\nu} \in \Lambda} \in \bbC^N$ satisfies
\bes{
\nmu{f -\tilde{f}   }_{L^2_{\varrho}(\cU ; \cV)} \leq c_1 \cdot \xi,\quad \nmu{f - \tilde{f}  }_{L^{\infty}(\cU ; \cV)} \leq c_2 \cdot \sqrt{k} \cdot \xi,\qquad \tilde{f} : = \sum_{\bm{\nu} \in \Lambda} \tilde{c}_{\bm{\nu}} \Psi_{\bm{\nu}},
}
where
\bes{
\xi = \dfrac{\sigma_{k}(\bm{c}_{\Lambda})_{1,\u;\cV}}{\sqrt{k}}   +   \frac{E_{\Lambda,\infty}(f)}{\sqrt{k}}+ E_{\Lambda,2}(f) + E_{h,\infty}(f) + \cG(\tilde{\bm{c}}) - \cG(\cP_h(\bm{c}_{\Lambda}))   + \frac{\nm{\bm{n}}_{2;\cV}}{\sqrt{m}} ,
}
$\bm{c}_{\Lambda}$ is as in \R{f_coeff_trunc}, $\cP_{h}(\bm{c}_{\Lambda}) = (\cP_h(c_{\bm{\nu}}))_{\bm{\nu} \in \Lambda}$, $k = m / (c_0 L)$ for $L = L(m,d,\epsilon)$ as in \R{Ldef}, and $\bm{n}$ is as in \R{def-measVec}. 
}

\begin{proof}
We divide the proof into several steps.  

\pbk \textit{Step 1: Splitting the error into separate terms.} Consider the $L^2_{\varrho}(\cU;\cV)$-norm error first. By the triangle inequality and the fact that $\cP_h$ is a projection, we have
\eas{
\nmu{f -\tilde{f}   }_{L^2_{\varrho}(\cU ; \cV)} 
 &\leq  \nm{f - \cP_h(f)}_{L^2_{\varrho}(\cU; \cV)} + \nm{\cP_h(f) - \cP_{h}(f_{\Lambda}) }_{L^2_{\varrho}(\cU ; \cV)} + \nmu{\cP_h(f_{\Lambda}) - \tilde{f} }_{L^2_{\varrho}(\cU ; \cV)} 
\\
 & \leq   \nm{f - \cP_h(f)}_{L^{\infty}(\cU; \cV)} + \nm{ f -  f_{\Lambda} }_{L^2_{\varrho}(\cU ; \cV)} + \nmu{\cP_h(f_{\Lambda}) - \tilde{f}  }_{L^2_{\varrho}(\cU ; \cV)} 
\\
 &=   E_{h,\infty}(f) + E_{\Lambda,2}(f) + \nmu{\cP_h(f_{\Lambda}) - \tilde{f}  }_{L^2_{\varrho}(\cU ; \cV)} .
}
Then, by orthonormality, we have 
\bes{
\nmu{f -\tilde{f}   }_{L^2_{\varrho}(\cU ; \cV)}  \leq  E_{h,\infty}(f) + E_{\Lambda,2}(f) + \nmu{ \cP_h(\bm{c}_{\Lambda}) - \tilde{\bm{c}}}_{2 ; \cV} .
}
Similarly, for the $L^{\infty}(\cU;\cV)$-norm error, we have 
\eas{
\nmu{f -\tilde{f}   }_{L^{\infty}(\cU ; \cV)} 
 &\leq  \nm{f - \cP_h(f)}_{L^{\infty}(\cU; \cV)} + \nm{\cP_h(f) - \cP_{h}(f_{\Lambda}) }_{L^{\infty}(\cU ; \cV)} + \nmu{\cP_h(f_{\Lambda}) - \tilde{f} }_{L^{\infty}(\cU ; \cV)} 
\\
 & \leq   \nm{f - \cP_h(f)}_{L^{\infty}(\cU; \cV)} + \nm{ f -  f_{\Lambda} }_{L^{\infty}(\cU ; \cV)} + \nmu{\cP_h(f_{\Lambda}) - \tilde{f}  }_{L^{\infty}(\cU ; \cV)} 
\\
 &=   E_{h,\infty}(f) + E_{\Lambda,\infty}(f) + \nmu{\cP_h(f_{\Lambda}) - \tilde{f}  }_{L^{\infty}(\cU ; \cV)} .
}
Using the definition \R{weights_def} of the weights $\bm{u}$, we deduce that
\bes{
\nmu{f -\tilde{f}   }_{L^{\infty}(\cU ; \cV)}  \leq E_{h,\infty}(f) + E_{\Lambda,\infty}(f) + \nmu{\cP_h(\bm{c}_{\Lambda}) - \tilde{\bm{c}}}_{1,\bm{u};\cV}.
}
Therefore, the rest of the proof is devoted to showing the following bounds:
\be{
\label{bound-needed-1}
\nmu{ \cP_h(\bm{c}_{\Lambda}) - \tilde{\bm{c}}}_{2 ; \cV} \leq c_1 \cdot \xi,\quad  \nmu{\cP_h(\bm{c}_{\Lambda}) - \tilde{\bm{c}}}_{1,\bm{u};\cV} \leq c_2 \cdot \sqrt{k} \cdot \xi.
}
We do this in the next two steps by first asserting that $\bm{A}$ has the weighted rNSP (Step 2) and then by applying the error bounds of Lemma \ref{lemma-musuboptimal} (Steps 3 and 4).

\pbk  
\textit{Step 2: Asserting the weighted rNSP.}  We now show that $\bm{A}$ has the weighted rNSP over $\cV_h$ of order $(k,\u)$ with probability at least $1-\epsilon/2$. This is based on Lemma \ref{l:LegMat_RIP}. First observe that
\bes{
L = L(m,d,\epsilon) \geq \log^2(3) \cdot \min \{ \log(3) + 1 , \log(3) \cdot \log(\E) \} \geq 1,
}
since $m \geq 3$. This implies that $m \geq m / L \geq m / (c_0 L) = k$ since $c_0 \geq 1$ as well. Since $n = \lceil m / L \rceil \leq m/L + 1 \leq 2 m$, we get
\eas{
\log(4 k) & \cdot  \left ( \log(4 k) \cdot \min \left \{ \log(n) + d , \log(\E d) \cdot \log(2 n) \right \} + \log(2/\epsilon) \right ) 
\\
& \leq  \log(4m) \cdot  \left ( \log(4m) \cdot \min \left \{ \log(2m) + d , \log(\E d) \cdot \log(4m) \right \} + \log(2/\epsilon) \right ) 
 \\
& \leq  c_0 L(m,d,\epsilon) / 2
}
for a suitably-large choice of $c_0$. Hence
\bes{
m = c_0  k L(m,d,\epsilon) \geq 2 c_0 k L'(2k,d,\epsilon/2),
}
where $L'$ is defined as in Lemma~\ref{l:LegMat_RIP}, and therefore (again assuming a suitably-large choice of $c_0$) \R{m-cond-for-wRIP} holds with $k$ replaced by $2k$. We deduce that $\bm{A}$ satisfies the weighted RIP of order $(2k,\u)$ with constant $\delta_{2k,\u} \leq 1/4$, with probability at least $1-\epsilon/2$. Then, we deduce from Lemmas \ref{l:from_R_to_V_rNSP} and \ref{implies-RIP} that $\bm{A}$ has (with the same probability) the weighted rNSP of order $(k,\u)$ over $\cV_h$ with constants $\rho = 2 \sqrt{2} / 3$ and $\gamma = 2 \sqrt{5} / 3$.

\pbk
\textit{Step 3: Bounding $\cP_h(\bm{c}_{\Lambda}) - \tilde{\bm{c}}$ using the weighted rNSP.}  
We use Lemma \ref{lemma-musuboptimal}. First, consider the value of $\lambda$. Since $c_0 \geq 1$ we have $m/L \geq m/(c_0 L) = k$. Hence, recalling the values for $\rho$ and $\gamma$ obtained in the previous step, we have
\be{
\label{lambda-bound}
\frac{1}{4 \sqrt{c_0}} \frac{1}{\sqrt{k}} = \frac{1}{4 \sqrt{m/L}} = \lambda \leq \frac{1}{4 \sqrt{k}} <   \dfrac{(1+ \rho)^2}{(3+\rho) \gamma }    \frac{1}{\sqrt{k}}.
}
Therefore \R{lambda-bound-rNSP-err} holds. We now apply this lemma with $\cV = \cV_h$, $\bm{x} = \cP_h(\bm{c}_{\Lambda})$, $\tilde{\bm{x}} = \tilde{\bm{c}}$ and $\bm{e} = \bm{A} \cP_h(\bm{c}_{\Lambda}) - \bm{b}$. Notice first that the best $(k,\u)$-approximation error \R{sigma-k-w-equiv} satisfies
\be{
\label{sigma-k-Ph}
\sigma_{k}(\cP_h(\bm{c}_{\Lambda}))_{1,\bm{u};\cV} = \inf \left \{ \sum_{\bm{\nu} \in \Lambda \backslash S} u_{\bm{\nu}} \nm{\cP_h(c_{\bm{\nu}})}_{\cV} : S \subseteq \Lambda,\ |S|_{\bm{u}} \leq k \right \} \leq \sigma_{k}(\bm{c}_{\Lambda})_{1,\bm{u};\cV},
}
since $\cP_h$ is a projection. Hence, applying Lemma \ref{lemma-musuboptimal} and using the lower bound in \R{lambda-bound}, we get
\be{
\label{tilde-c-Ph-c-bd}
\begin{split}
\nmu{\tilde{\bm{c}} - \cP_h(\bm{c}_{\Lambda}) }_{2;\cV}  & \leq c_1 \left [ \frac{\sigma_{k}(\bm{c}_{\Lambda})_{1,\bm{w};\cV}}{\sqrt{k}} + \cG(\tilde{\bm{c}}) - \cG(\cP_h(\bm{c}_{\Lambda})) + \nm{\bm{A} \cP_h(\bm{c}_{\Lambda}) - \bm{b}}_{2;\cV} ,
\right ],
\\
\nmu{\tilde{\bm{c}} - \cP_h(\bm{c}_{\Lambda}) }_{1,\bm{u};\cV}  & \leq c_2 \left [ \sigma_{k}(\bm{c}_{\Lambda})_{1,\bm{w};\cV} + \sqrt{k} \left (\cG(\tilde{\bm{c}}) - \cG(\cP_h(\bm{c}_{\Lambda})) \right ) + \sqrt{k} \nm{\bm{A} \cP_h(\bm{c}_{\Lambda}) - \bm{b}}_{2;\cV} 
\right ],
\end{split}
}
with probability at least $1-\epsilon/2$.
Therefore, to show \R{bound-needed-1} and therefore complete the proof, it suffices to show that the following holds with probability at least $1-\epsilon/2$:
\be{
\label{bound-needed-2}
\nm{\bm{A} \cP_h(\bm{c}_{\Lambda}) - \bm{b}}_{2;\cV} \leq \sqrt{2} \left ( \frac{E_{\Lambda,\infty}(f)}{\sqrt{k}}  + E_{\Lambda,2}(f) \right )+ E_{h,\infty}(f) + \frac{\nm{\bm{n}}_{2;\cV}}{\sqrt{m}}.
}
The overall result then follows by the union bound.

\pbk
\textit{Step 4: Showing that \R{bound-needed-2} holds.} Observe that
\eas{
\sqrt{m} \nmu{(\bm{A} \cP_h(\bm{c}_{\Lambda}) - \bm{b})_i}_{\cV} & \leq \nmu{\cP_h(f_{\Lambda})(\y_i) - f(\y_i) }_{\cV} + \nm{n_i}_{\cV}
\\
& \leq \nmu{\cP_h(f_{\Lambda})(\y_i) - \cP_h(f)(\y_i) }_{\cV} + \nmu{f(\y_i) - \cP_h(f)(\y_i) }_{\cV} + \nm{n_i}_{\cV}
\\
& \leq \nmu{f(\bm{y}_i) - f_{\Lambda}(\y_i) }_{\cV} + E_{h,\infty}(f) + \nmu{n_i}_{\cV}.
}
Therefore
\be{
\label{A-E-disc-bd}
\nmu{\bm{A} \cP_h(\bm{c}_{\Lambda}) - \bm{b}}_{\cV;2} \leq E_{\Lambda,\mathrm{disc}}(f) + E_{h,\infty}(f) + \frac{\nm{\bm{n}}_{2;\cV} }{\sqrt{m}},
}
where 
\be{
\label{discrete-error}
E_{\Lambda,\mathrm{disc}}(f) = \sqrt{\frac1m \sum^{m}_{i=1} \nmu{f(\bm{y}_i) - f_{\Lambda}(\y_i) }^2_{\cV} }.
}
For this final step, we follow near-identical arguments to those found in \cite[Lem.\ 7.11]{adcock2021sparse}. This shows that 
\bes{
E_{\Lambda,\mathrm{disc}}(f) \leq \sqrt{2} \left ( \frac{E_{\Lambda,\infty}(f)}{\sqrt{k}} + E_{\Lambda,2}(f) \right ),
}
with probability at least $1-\epsilon/2$, provided $m \geq 2 k \log(2/\epsilon)$. However, this follows due to the assumptions on $m$ and the arguments given in Step 2. Thus we obtain \R{bound-needed-2} and the proof is complete.
\end{proof}

\thm{
[Error bounds for inexact minimizers, algebraic and infinite-dimensional case]
\label{t:main-res-map-alg_inex_infty}
Let $d = \infty$, $m \geq 3$, $ 0 < \epsilon < 1$, $\{ \Psi_{\bm{\nu}} \}_{\bm{\nu} \in \cF} \subset L^2_{\varrho}(\cU)$ be either the orthonormal Chebyshev or Legendre basis, $\cV_h \subseteq L^2_{\varrho}(\cU)$ be a subspace of $L^2_{\varrho}(\cU)$ and $\Lambda = \Lambda_{n}^{\mathsf{HCI}}$ be the hyperbolic cross index set with $n= \lceil m/L\rceil$ where  $L = L(m,d,\epsilon)$ is as in \eqref{Ldef}. Let $f \in L^2_{\varrho}(\cU;\cV)$, draw $\bm{y}_1,\ldots,\bm{y}_m$ randomly and independently according to $\varrho$ and suppose that $\bm{A}$, $\bm{b}$ and $\bm{e}$ are as in \R{def-measMatrix} and \R{def-measVec}. Consider the Hilbert-valued, weighted SR-LASSO problem \R{wsr-LASSO} with weights $\bm{w} = \bm{u}$ as in \R{weights_def} and $\lambda = (4 \sqrt{m/L})^{-1}$. Then there exists universal constants $c_0, c_1,c_2 \geq 1$ such that the following holds with probability at least $1-\epsilon$. Any $\tilde{\bm{c}} = (\tilde{c}_{\bm{\nu}})_{\bm{\nu} \in \Lambda} \in \bbC^N$ satisfies
\bes{
\nmu{f -\tilde{f}   }_{L^2_{\varrho}(\cU ; \cV)} \leq c_1 \cdot \xi,\quad \nmu{f - \tilde{f}  }_{L^{\infty}(\cU ; \cV)} \leq c_2 \cdot \sqrt{k} \cdot \xi,\qquad \tilde{f} : = \sum_{\bm{\nu} \in \Lambda} \tilde{c}_{\bm{\nu}} \Psi_{\bm{\nu}},
}
where
\bes{
\xi = \dfrac{\sigma_{k}(\bm{c}_{\Lambda})_{1,\u;\cV}}{\sqrt{k}}   +   \frac{E_{\Lambda,\infty}(f)}{\sqrt{k}}+ E_{\Lambda,2}(f) + E_{h,\infty}(f) + \cG(\tilde{\bm{c}}) - \cG(\cP_h(\bm{c}_{\Lambda}))   + \frac{\nm{\bm{n}}_{2;\cV}}{\sqrt{m}} ,
}
$\bm{c}_{\Lambda}$ is as in \R{f_coeff_trunc}, $\cP_{h}(\bm{c}_{\Lambda}) = (\cP_h(c_{\bm{\nu}}))_{\bm{\nu} \in \Lambda}$, $k = m / (c_0 L)$ for $L = L(m,d,\epsilon)$ as in \R{Ldef}, and $\bm{n}$ is as in \R{def-measVec}. 
}
\prf{
The proof has the same structure as that of the previous theorem. Steps 1, 3 and 4 are identical. The only differences occur in Step 2. We now describe these changes. Once more we observe that $L = L(m,\infty,\epsilon) \geq 1$ since $m \geq 3$. Hence $m \geq m/L \geq m/(c_0 L) = k$ since $c_0 \geq 1$. We also have $n = \lceil m/L \rceil \leq 2 m$. Therefore
\eas{
 \log(4 k) \cdot \left ( \log(4 k) \cdot \log^2(2 n) + \log(2/\epsilon) \right )
\leq  \log(4 m) \cdot \left ( \log^3(4 m)  + \log(2/\epsilon) \right )
\leq  c_0 L(m,\infty,\epsilon)/2
}
for a suitably-large choice of $c_0$. We deduce that $m = c_0 k L(m,\infty,\epsilon) \geq 2 c_0 k L'(2k,\infty,\epsilon/2)$, where $L'$ is as in Lemma \ref{l:LegMat_RIP}. An application of this lemma now shows that $\bm{A}$ has the weighted RIP of order $(2k,\u)$ with constant $\delta_{2k,\u} \leq 1/4$, as required.
}

\thm{
[Error bounds for inexact minimizers, exponential case]
\label{t:main-res-map-alg_inex_exponential}
Let $d \in \bbN$, $m \geq 3$, $ 0 < \epsilon < 1$, $\{ \Psi_{\bm{\nu}} \}_{\bm{\nu} \in \bbN^d_0} \subset L^2_{\varrho}(\cU)$ be either the orthonormal Chebyshev or Legendre basis, $\cV_h \subseteq L^2_{\varrho}(\cU)$ be a subspace of $L^2_{\varrho}(\cU)$ and $\Lambda = \Lambda_{n,d}^{\mathsf{HC}}$ 
be the hyperbolic cross index set with $n$ as in \R{n-exp-case}. Draw $\bm{y}_1,\ldots,\bm{y}_m$ randomly and independently according to $\varrho$. Then, with probability at least $1-\epsilon$, the following holds. Let $f \in L^2_{\varrho}(\cU;\cV)$ and suppose that $\bm{A}$, $\bm{b}$ and $\bm{e}$ are as in \R{def-measMatrix} and \R{def-measVec}. Consider the Hilbert-valued, weighted SR-LASSO problem \R{wsr-LASSO} with weights $\bm{w} = \bm{u}$ as in \R{weights_def} and $\lambda = (4 \sqrt{m/L})^{-1}$. Then there exists universal constants $c_0, c_1,c_2 \geq 1$ such that any $\tilde{\bm{c}} = (\tilde{c}_{\bm{\nu}})_{\bm{\nu} \in \Lambda} \in \bbC^N$ satisfies
\bes{
\nmu{f -\tilde{f}   }_{L^2_{\varrho}(\cU ; \cV)} \leq c_1 \cdot \xi,\quad \nmu{f - \tilde{f}  }_{L^{\infty}(\cU ; \cV)} \leq c_2 \cdot \sqrt{k} \cdot \xi,\qquad \tilde{f} : = \sum_{\bm{\nu} \in \Lambda} \tilde{c}_{\bm{\nu}} \Psi_{\bm{\nu}},
}
where
\bes{
\xi = \dfrac{\sigma_{k}(\bm{c}_{\Lambda})_{1,\u;\cV}}{\sqrt{k}}   +  E_{\Lambda,\infty}(f)+ E_{h,\infty}(f) + \cG(\tilde{\bm{c}}) - \cG(\cP_h(\bm{c}_{\Lambda}))   + \frac{\nm{\bm{n}}_{2;\cV}}{\sqrt{m}} ,
}
$\bm{c}_{\Lambda}$ is as in \R{f_coeff_trunc}, $\cP_{h}(\bm{c}_{\Lambda}) = (\cP_h(c_{\bm{\nu}}))_{\bm{\nu} \in \Lambda}$, $k = m / (c_0 L)$
for $L = L(m,d,\epsilon)$ as in \R{Ldef}, and $\bm{n}$ is as in \R{def-measVec}. 
}

\prf{
The proof has the same structure as that of Theorem \ref{t:main-res-map-alg_inex}. Step 1 is identical, and reduces the proof to showing that \R{bound-needed-1} holds. We now describe the modifications needed in Steps 2--4:

\pbk
\textit{Step 2: Asserting the weighted rNSP.} We now show that $\bm{A}$ has the weighted rNSP over $\cV_h$ of order $(k,\bm{u})$ with probability at least $1-\epsilon$. This step is essentially the same, except for the choice of $n$ and the probability $1-\epsilon$ instead of $1-\epsilon/2$. 

\pbk
\textit{Step 3: Bounding $\cP_h(\bm{c}_{\Lambda}) - \tilde{\bm{c}}$ using the weighted rNSP.} Since $\lambda$ and $k$ are the same as in Theorem \ref{t:main-res-map-alg_inex}, the bound \R{lambda-bound} also holds in this case. We then follow the same arguments, leading to \R{tilde-c-Ph-c-bd} holding with probability at least $1-\epsilon$. Finally, rather than \R{bound-needed-2}, we ask for the slightly modified bound
\be{
\label{bound-needed-2-exp}
\nm{\bm{A} \cP_h(\bm{c}_{\Lambda}) - \bm{b}}_{2;\cV} \leq E_{\Lambda,\infty}(f) + E_{h,\infty}(f) + \frac{\nm{\bm{n}}_{2;\cV}}{\sqrt{m}} ,
}
to hold with probability one.

\pbk
\textit{Step 4: Showing \R{bound-needed-2-exp} holds.} By the same argument, we see that \R{A-E-disc-bd} holds. Instead of the probabilistic bound for $E_{\Lambda,\mathrm{disc}}(f)$, we now simply bound it as
\bes{
E_{\Lambda,\mathrm{disc}}(f) \leq \nmu{f - f_{\Lambda}}_{L^{\infty}(\cU ; \cV)} = E_{\Lambda,\infty}(f).
}
This immediately implies \R{bound-needed-2-exp}.

\pbk
Finally, we observe that we can simplify the previous estimates in this case using the bound $E_{\Lambda,2}(f) \leq E_{\Lambda,\infty}(f)$.
}

\section{Error bounds and the restarting scheme for the primal-dual iteration}\label{s:errbds_PDI}

Theorems \ref{t:main-res-map-alg_inex}--\ref{t:main-res-map-alg_inex_exponential} reduce the problem of proving the main results (Theorems \ref{t:main-res-map-alg}--\ref{t:main-res-effic-algo-alg_exp}) to two tasks. The first involves bounding the error in the objective function, i.e.\ the term
\bes{
\cG(\tilde{\bm{c}}) - \cG(\cP_h(\bm{c}_{\Lambda})),
}
where $\tilde{\bm{c}}$ is either an exact minimizer or an approximate minimizer obtained via the primal dual iteration. The second involves the various approximation error terms depending on $f$ and its polynomial coefficients. 

In this section, we address the first task. We first provide an error bound for the (unrestarted) primal-dual iteration when applied to Hilbert-valued weighted SR-LASSO problem \R{SRLASSO-CS-sec}, and then use this to derive the specific restart scheme.

\subsection{Error bounds for the primal-dual iteration}

We now return to the general setting of the primal-dual iteration, where it is applied to the problem \R{primal1} and takes the form \R{PDI}. The following result from \cite[Theorem 5.1]{ChambolleEtAl2016}  establishes an important error bound for the Lagrangian difference.

 \begin{theorem}\label{Primal-dual-bound}
Let $\tau , \sigma >0$, initial points $(x^{(0)},{\xi}^{(0)}) \in \cX \times \cY $ and  a bounded linear operator  $A \in \cB(\cX,\cY)$, be such that $\|A\|_{\mathcal{B}(\cX,\cY)}^2  \leq (\tau \sigma)^{-1} $. Consider the sequence $\{ (x^{(n)},{\xi}^{(n)}) \}^{\infty}_{n=1}$ generated by the primal-dual iteration \R{PDI}. Then, for any $(x,{\xi}) \in \cX \times \cY $, 
\begin{equation}\label{reduced-PD-gap}
\cL({\bar{x}}^{(n)},{\xi}) - \cL(x,{\bar{\xi}}^{(n)}) \leq \dfrac{\tau^{-1}\nmu{x-x^{(0)}}_{2;\cV}^2+\sigma^{-1}\nmu{{\xi}-{\xi}^{(0)}}_{2;\cV}^2}{n},
\end{equation}
where
\begin{equation*}
\bar{x}^{(n)}= \dfrac{1}{n}\sum_{k=1}^n {x}^{(k)} \qan  {\bar{\xi}}^{(n)}= \dfrac{1}{n}\sum_{k=1}^n {\xi}^{(k)}, 
\end{equation*}
are the ergodic sequences and $\cL$ is the Lagrangian \R{Lag1}. 
\end{theorem}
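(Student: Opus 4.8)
The plan is to follow the classical convergence analysis of the primal--dual (Chambolle--Pock) iteration, specialized to the complex Hilbert-space setting. Since the statement is quoted from \cite[Theorem 5.1]{ChambolleEtAl2016} (see also \cite{ChambollePock2011}), the argument is essentially a controlled bookkeeping exercise organized around the optimality conditions of the two proximal updates. First I would rewrite each step of \R{PDI} as a subdifferential inclusion. The primal update is equivalent to $\tau^{-1}(x^{(n)} - \tau A^*(\xi^{(n)}) - x^{(n+1)}) \in \partial g(x^{(n+1)})$, and the dual update, written in terms of the extrapolated point $\bar{x} = 2 x^{(n+1)} - x^{(n)}$, is equivalent to $\sigma^{-1}(\xi^{(n)} + \sigma A(\bar{x}) - \xi^{(n+1)}) \in \partial h^*(\xi^{(n+1)})$. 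Invoking convexity of $g$ and of $h^*$, these inclusions produce, for every $x \in \cX$ and $\xi \in \cY$, two variational inequalities that bound $g(x^{(n+1)}) - g(x)$ and $h^*(\xi^{(n+1)}) - h^*(\xi)$ in terms of the associated inner products.

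Next I would assemble the one-step Lagrangian gap $\cL(x^{(n+1)}, \xi) - \cL(x, \xi^{(n+1)})$ using the definition \R{Lag1} and substitute the two variational inequalities. To the quadratic inner products I apply the polarization (three-point) identity $\Re\langle a - b, c - b\rangle = \tfrac12(\|a-b\|^2 + \|c-b\|^2 - \|a-c\|^2)$, which converts them into the telescoping differences $\tfrac{1}{2\tau}(\|x - x^{(n)}\|^2 - \|x - x^{(n+1)}\|^2)$ and $\tfrac{1}{2\sigma}(\|\xi - \xi^{(n)}\|^2 - \|\xi - \xi^{(n+1)}\|^2)$, together with the nonpositive ``gap'' terms $-\tfrac{1}{2\tau}\|x^{(n+1)} - x^{(n)}\|^2$ and $-\tfrac{1}{2\sigma}\|\xi^{(n+1)} - \xi^{(n)}\|^2$. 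The role of the extrapolation $\bar{x} = 2x^{(n+1)} - x^{(n)}$ in the dual step is precisely to make the bilinear terms in $A$ telescope, leaving after each step a single residual coupling term of the form $-\Re\langle A(x^{(n+1)} - x^{(n)}), \xi^{(n+1)} - \xi^{(n)}\rangle$ (plus a boundary contribution that survives summation).

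The crux of the argument, and the main obstacle, is controlling this residual coupling term. I would bound it by Cauchy--Schwarz as $|\Re\langle A(x^{(n+1)} - x^{(n)}), \xi^{(n+1)} - \xi^{(n)}\rangle| \le \|A\|\,\|x^{(n+1)} - x^{(n)}\|\,\|\xi^{(n+1)} - \xi^{(n)}\|$ and then apply the weighted arithmetic--geometric mean inequality $\tfrac{1}{2\tau}a^2 + \tfrac{1}{2\sigma}b^2 \ge (\tau\sigma)^{-1/2}ab$. This shows that the residual is absorbed by the two negative gap terms exactly when $\|A\|_{\cB(\cX,\cY)}^2 \le (\tau\sigma)^{-1}$, which is the hypothesis of the theorem; this is the step that forces the stepsize condition. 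After absorption the per-step gap terms are nonpositive and may be discarded, so that each step satisfies $\cL(x^{(n+1)}, \xi) - \cL(x, \xi^{(n+1)}) \le \tfrac{1}{2\tau}(\|x - x^{(n)}\|^2 - \|x - x^{(n+1)}\|^2) + \tfrac{1}{2\sigma}(\|\xi - \xi^{(n)}\|^2 - \|\xi - \xi^{(n+1)}\|^2)$, possibly up to the boundary coupling term that is handled identically at the endpoint.

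Finally I would sum this inequality over the indices $0, \ldots, n-1$. The right-hand sides telescope, collapsing to the initial values and yielding $\sum_{k=1}^{n} \bigl( \cL(x^{(k)}, \xi) - \cL(x, \xi^{(k)}) \bigr) \le \tfrac{1}{2\tau}\|x - x^{(0)}\|^2 + \tfrac{1}{2\sigma}\|\xi - \xi^{(0)}\|^2$, the stated constants $\tau^{-1}$ and $\sigma^{-1}$ following from tracking the absorption (a mildly looser constant than the bare telescoping suggests). To convert this into a statement about the ergodic averages, I would use that $\cL(\cdot, \xi)$ is convex and $\cL(x, \cdot)$ is concave, so Jensen's inequality gives $\cL(\bar{x}^{(n)}, \xi) \le \tfrac1n \sum_{k=1}^n \cL(x^{(k)}, \xi)$ and $\cL(x, \bar{\xi}^{(n)}) \ge \tfrac1n \sum_{k=1}^n \cL(x, \xi^{(k)})$. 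Subtracting and dividing the summed bound by $n$ then yields \R{reduced-PD-gap}.
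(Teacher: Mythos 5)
The paper does not actually prove this theorem: it is imported directly from \cite[Thm.\ 5.1]{ChambolleEtAl2016}, so there is no internal proof to compare your argument against. What you have written is the standard argument behind that citation, and it is correct in outline: the proximal updates as subdifferential inclusions, the three-point identity to convert the resulting variational inequalities into telescoping quadratic differences, absorption of the coupling term $\Re\langle A(x^{(n+1)}-x^{(n)}),\xi^{(n+1)}-\xi^{(n)}\rangle$ into the negative gap terms via Cauchy--Schwarz and Young's inequality (which is exactly where the hypothesis $\|A\|^2_{\cB(\cX,\cY)}\le(\tau\sigma)^{-1}$ is used), summation, and finally Jensen's inequality applied to the convex--concave Lagrangian to pass to the ergodic averages. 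The only place you gesture rather than compute is the boundary coupling term of the form $-\Re\langle A(x^{(n)}-x^{(n-1)}),\xi-\xi^{(n)}\rangle$ that survives the telescoping of the bilinear part: it must be split by Young's inequality, with one piece absorbed by the last remaining gap term $-\tfrac{1}{2\tau}\|x^{(n)}-x^{(n-1)}\|^2$ and the other cancelled against the $-\tfrac{1}{2\sigma}\|\xi-\xi^{(n)}\|^2$ left over from the telescoped sum; carrying this out is routine and in fact yields constants $(2\tau)^{-1}$ and $(2\sigma)^{-1}$, so the stated bound with $\tau^{-1}$ and $\sigma^{-1}$ follows a fortiori. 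With that step made explicit, your reconstruction is a complete proof of the cited result.
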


The following lemma shows a decay rate of $1/n$ on the objective function in the case of the primal-dual iteration when applied to the problem \R{SRLASSO-CS-sec}. It is an extension of \cite[Lem.\ 8.6]{adcock2021compressive} to the  weighted and Hilbert-valued setting.
\begin{lemma}
\label{lemma-subopti}
Let $\bm{A} \in \cB(\cV^N,\cV^m)$ and $\tau , \sigma >0$ be such that $\|\bm{A}\|_{\mathcal{B}(\cVN,\cVM)}^2  \leq (\tau \sigma)^{-1} $. Consider the sequence $\{ (\x^{(n)},\bm{\xi}^{(n)}) \}^{\infty}_{n=1}$ generated by the primal-dual iteration in  \R{PDI}  applied to \R{SRLASSO-CS-sec} 
 with $\x^{(0)}\in \cVN$ and $\bm{\xi}^{(0)} = \bm{0} \in \cV^m$. Then, for any $\x \in \cVN$,
\begin{equation}
\label{subopti-error}
\cG(\bm{\bar{x}}^{(n)}) - \cG(\bm{x}) \leq \dfrac{{\tau}^{-1}\nm{\x-\x_0}_{2;\cV}^2+{\sigma}^{-1}}{n},\qquad \bar{\x}^{(n)} = \frac{1}{n}\sum_{k=1}^n \bm{x}^{(k)}.
\end{equation}
\end{lemma}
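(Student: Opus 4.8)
The plan is to invoke the general Lagrangian bound of Theorem \ref{Primal-dual-bound} and convert it into a statement about the objective gap $\cG(\bar{\bm{x}}^{(n)}) - \cG(\bm{x})$ through a judicious choice of the free arguments $(\bm{x},\bm{\xi})$ appearing there. Recall that \R{SRLASSO-CS-sec} is exactly the problem \R{primal1} with $g(\bm{x}) = \lambda \nm{\bm{x}}_{1,\bm{w};\cV}$ and $h(\bm{y}) = \nm{\bm{y} - \bm{b}}_{2;\cV}$, so that $\cG = g + h \circ \bm{A}$ and the associated Lagrangian \R{Lag1} reads $\cL(\bm{x},\bm{\xi}) = g(\bm{x}) + \Re\ip{\bm{A}\bm{x}}{\bm{\xi}}_{2;\cV} - h^*(\bm{\xi})$. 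As computed in \R{eq-fstar} (the same elementary computation is valid over $\cV$, not just $\cV_h$), $h^*(\bm{\xi}) = \Re\ip{\bm{b}}{\bm{\xi}}_{2;\cV} + \delta_B(\bm{\xi})$, where $B = \{\bm{\xi} \in \cV^m : \nm{\bm{\xi}}_{2;\cV} \leq 1\}$.

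First I would bound the term $-\cL(\bm{x},\bar{\bm{\xi}}^{(n)})$ from below. The Fenchel--Young inequality gives $h^*(\bm{\xi}) \geq \Re\ip{\bm{A}\bm{x}}{\bm{\xi}}_{2;\cV} - h(\bm{A}\bm{x})$ for every $\bm{\xi}$, equivalently $\cL(\bm{x},\bm{\xi}) \leq g(\bm{x}) + h(\bm{A}\bm{x}) = \cG(\bm{x})$ for all $\bm{\xi} \in \cV^m$. In particular $\cL(\bm{x},\bar{\bm{\xi}}^{(n)}) \leq \cG(\bm{x})$, so that $-\cL(\bm{x},\bar{\bm{\xi}}^{(n)}) \geq -\cG(\bm{x})$.

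The complementary step is to produce a dual vector $\hat{\bm{\xi}}$ with $\nm{\hat{\bm{\xi}}}_{2;\cV} \leq 1$ and $\cL(\bar{\bm{x}}^{(n)},\hat{\bm{\xi}}) = \cG(\bar{\bm{x}}^{(n)})$. I would write this out explicitly rather than appeal to attainment of a supremum over $B$ (which is delicate when $\cV$ is infinite-dimensional, since then $B$ is not compact): if $\bm{A}\bar{\bm{x}}^{(n)} \neq \bm{b}$, set $\hat{\bm{\xi}} = (\bm{A}\bar{\bm{x}}^{(n)} - \bm{b})/\nm{\bm{A}\bar{\bm{x}}^{(n)} - \bm{b}}_{2;\cV}$, and if $\bm{A}\bar{\bm{x}}^{(n)} = \bm{b}$, set $\hat{\bm{\xi}} = \bm{0}$. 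In both cases $\hat{\bm{\xi}} \in B$, and a direct computation gives $\Re\ip{\bm{A}\bar{\bm{x}}^{(n)} - \bm{b}}{\hat{\bm{\xi}}}_{2;\cV} = \nm{\bm{A}\bar{\bm{x}}^{(n)} - \bm{b}}_{2;\cV} = h(\bm{A}\bar{\bm{x}}^{(n)})$, whence $\cL(\bar{\bm{x}}^{(n)},\hat{\bm{\xi}}) = g(\bar{\bm{x}}^{(n)}) + h(\bm{A}\bar{\bm{x}}^{(n)}) = \cG(\bar{\bm{x}}^{(n)})$. Verifying this identity, and in particular covering the degenerate case $\bm{A}\bar{\bm{x}}^{(n)} = \bm{b}$, is the only genuinely delicate point; everything else is mechanical.

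Finally I would apply Theorem \ref{Primal-dual-bound} with the comparison point $\bm{x}$ from the lemma and with $\bm{\xi} = \hat{\bm{\xi}}$. Since $\bm{\xi}^{(0)} = \bm{0}$ and $\nm{\hat{\bm{\xi}}}_{2;\cV} \leq 1$, the dual contribution to the right-hand side obeys $\sigma^{-1}\nm{\hat{\bm{\xi}} - \bm{\xi}^{(0)}}_{2;\cV}^2 = \sigma^{-1}\nm{\hat{\bm{\xi}}}_{2;\cV}^2 \leq \sigma^{-1}$. Combining this with the two inequalities above, the left-hand side of \R{reduced-PD-gap} satisfies $\cL(\bar{\bm{x}}^{(n)},\hat{\bm{\xi}}) - \cL(\bm{x},\bar{\bm{\xi}}^{(n)}) \geq \cG(\bar{\bm{x}}^{(n)}) - \cG(\bm{x})$, and therefore
\[
\cG(\bar{\bm{x}}^{(n)}) - \cG(\bm{x}) \leq \frac{\tau^{-1}\nm{\bm{x} - \bm{x}^{(0)}}_{2;\cV}^2 + \sigma^{-1}}{n},
\]
which is \R{subopti-error}. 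Note that the hypothesis $\|\bm{A}\|_{\cB(\cV^N,\cV^m)}^2 \leq (\tau\sigma)^{-1}$ is used only to license the application of Theorem \ref{Primal-dual-bound}, and plays no further role.
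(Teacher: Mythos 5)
Your proposal is correct and follows essentially the same route as the paper's proof: both apply Theorem \ref{Primal-dual-bound} with the test dual variable $\hat{\bm{\xi}} = (\bm{A}\bar{\bm{x}}^{(n)}-\bm{b})/\nmu{\bm{A}\bar{\bm{x}}^{(n)}-\bm{b}}_{2;\cV}$ (handling the degenerate case separately), use $\nmu{\bar{\bm{\xi}}^{(n)}}_{2;\cV}\leq 1$ to lower-bound the Lagrangian gap by the objective gap, and use $\bm{\xi}^{(0)}=\bm{0}$ to bound the dual term by $\sigma^{-1}$. The only differences are cosmetic (Fenchel--Young in place of an explicit Cauchy--Schwarz estimate, and $\hat{\bm{\xi}}=\bm{0}$ rather than an arbitrary unit vector when $\bm{A}\bar{\bm{x}}^{(n)}=\bm{b}$).
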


\begin{proof}
Using \R{Lag1} and \R{eq-fstar}, the left-hand side of \R{reduced-PD-gap} is given by
\eas{
\cT_{n}(\bm{x} , \bm{\xi} ) : = & \left ( \lambda \nmu{\bar{\bm{x}}^{(n)}}_{1,\bm{w};\cV} + \Re \ip{\bm{A} \bar{\bm{x}}^{(n)} - \bm{b} }{\bm{\xi}}_{2;\cV} + \delta_{B}(\bm{\xi}) \right )
\\
& - \left ( \lambda \nmu{\x}_{1,\bm{w};\cV} + \Re \ip{\bm{A} \x- \bm{b} }{\bar{\bm{\xi}}^{(n)}}_{2;\cV} + \delta_{B}(\bar{\bm{\xi}}^{(n)}) \right ),
}
where $B$ is the unit ball in $\cV^m$.
Observe that the term $\bxi^{(n)}$ produced by this iteration satisfies $\nm{\bxi^{(n)}}_{2;\cV} \leq 1$. This follows from the observation shown in \S \ref{Sec:PDI-A} that the proximal mapping
\bes{
\mathrm{prox}_{\sigma h^*}(\bm{\xi}) =  \mathrm{proj}_{B}(\bm{\xi} - \sigma \bm{b})
}
involves the projection onto the unit ball $B$. Hence the ergodic sequence $\bar{\bxi}^{(n)}$ satisfies $\nm{\bar{\bxi}^{(n)}}_{2;\cV} \leq 1$ as well. Suppose now that $\bm{A}{\x}^{(n)}-\b \neq \0$ and set
\bes{
\bxi=\dfrac{\bm{A}{\x}^{(n)}-\b}{\nm{\bm{A}{\x}^{(n)}-\b}_{2;\cV}}.
}
Then $\delta_B(\bxi)=\delta_B(\bar{\bxi}^{(n)})=1$ and therefore
\eas{
\cT_{n}(\x , \bm{\xi}) & = \left ( \lambda \nmu{\bar{\bm{x}}^{(n)}}_{1,\bm{w};\cV} + \nmu{\bm{A}\bar{\bm{x}}^{(n)} - \bm{b}}_{2;\cV} \right ) - \left (  \lambda \nmu{\x}_{1,\bm{w};\cV} + \Re \ip{\bm{A} \x- \bm{b} }{\bar{\bm{\xi}}^{(n)}}_{2;\cV} \right )
\\
& \geq \left ( \lambda \nmu{\bar{\bm{x}}^{(n)}}_{1,\bm{w};\cV} + \nmu{\bm{A}\bar{\bm{x}}^{(n)} - \bm{b}}_{2;\cV} \right ) - \left (  \lambda \nmu{\x}_{1,\bm{w};\cV} + \nm{\bm{A} \x- \bm{b} }_{2;\cV} \right ).
}
Clearly, the same bound also holds in the case $\bm{A}{\x}^{(n)}-\b =\0$ where $\bm{\xi}$ is an arbitrary unit vector. Hence Theorem \ref{Primal-dual-bound} and the fact that $\nmu{\bm{\xi} - \bm{\xi}_0 }_{2;\cV} = \nm{\bm{\xi}}_{2;\cV} = 1$ gives the result. 
\end{proof}

\subsection{The restarting scheme}\label{ss:restart-scheme}

For convenience, we now introduce new and slightly modify some existing notation. First, we redefine the objective function $\cG$ of the Hilbert-valued weighted SR-LASSO problem \R{SRLASSO-CS-sec} to make the dependence on the term $\bm{b}$ explicit: namely, we set
\bes{
\cG(\x,\bm{b}) = \lambda \nm{\bm{\x}}_{1,\w;\cV} + \nmu{\bm{A} \bm{x} - \bm{\bm{b}} }_{2;\cV}, \quad  \x \in \cVN,\ \bm{b} \in \cVm.
}
We then let
\begin{equation}\label{Error_F}
\cE(\z,\x,\b) = \cG(\z,\bm{b}) - \cG(\x,\bm{b}), \quad \x, \z \in \cVN,\ \bm{b} \in \cVm.
\end{equation}
Now consider the ergodic sequence $\bar{\bm{x}}^{(n)}$ produced by $n$ iterations of the primal-dual iteration \R{PDI} applied to \R{SRLASSO-CS-sec} with parameters $\tau,\sigma > 0$, $\x_0 \in \cV^N$ and $\bm{\xi}_0 = \bm{0} \in \cV^m$. For reasons that will become clear in a moment, we now make the dependence on the vector $\bm{b}$ in \R{SRLASSO-CS-sec}, the number of iterations $\bar{\x}^{(n)}$ and the initial vector $\x_0$ explicit, by defining
\bes{
\cP(\x_0 , \bm{b} , n) = \bar{\x}^{(n)}.
}
With this in hand, we conclude this discussion by noting the following two scaling properties:
\begin{equation}\label{scaling_prop}
\cG(a \x , \b) = a \cG(\x,\b/a), \quad \cE(a \z , \x , \b) = a \cE (\z , \x/a , \b/a).
 \end{equation} 
 These hold for any $a > 0$ and for any $\x, \z \in \cV^N$ and $\b \in \cV^m$.

\lem{
Suppose that $\bm{A} \in \cB(\cV^N,\cV^m)$ has the weighted rNSP over $\cV$ of order $(k,\w)$ with constants  $0<\rho<1$ and $\gamma>0$. Consider the Hilbert-valued weighted SR-LASSO problem \R{SRLASSO-CS-sec} with parameter $\lambda = c / \sqrt{k}$, where $0 < c \leq \frac{(1+\rho)^2}{(3+\rho) \gamma}$. Let $\cE$ and $\cP$ be as defined above, $\tau,\sigma$ satisfy $\nmu{\bm{A}}^2_{\cB(\cV^N,\cV^m)} \leq (\tau \sigma)^{-1}$ and $\x,\x_0 \in \cVN$, $\bm{b} \in \cV^m$, $a > 0$. Then
\bes{
\cE(a \cP(\x_0/a,\bm{b}/a,n) , \bm{x} , \bm{b} ) \leq \frac{C^2}{a \tau n}  \left (  \cE(\x_0,\x,\bm{b}) + \xi \right )^2 + \frac{a}{\sigma n},
} 
where
\be{
\label{C-def-restart-scheme}
C = 2  \max \left \{ C'_1 / c , C'_2 \right \} ,
}
$C'_1,C'_2$ are as in Lemma \ref{lemma-wrNSP-l1} 
and
\be{
\label{xi-def}
\xi = \xi(\bm{x},\bm{b}) = \frac{\sigma_{k}(\x)_{1,\w;\cV}}{\sqrt{k}} + \nm{\bm{A} \bm{x} - \bm{b}}_{2;\cV}.
}
}

\prf{
The scaling property \R{scaling_prop} and Lemma \ref{lemma-subopti} give
\eas{
\cE(a \cP(\x_0/a,\bm{b}/a,n) , \bm{x} , \bm{b} ) & = a \cE(\cP(\x_0/a,\bm{b}/a,n) , \bm{x}/a , \bm{b}/a) 
\\
& \leq a \left ( \frac{\tau^{-1} \nm{\x/a - \x_0/a}^2_{2;\cV} + \sigma^{-1} }{n} \right )
\\
& = \frac{\nm{\x-\x_0}^2_{2;\cV}}{a \tau n} + \frac{a}{\sigma n}.
}
Now consider the term $\nm{\x-\x_0}_{2;\cV}$. Since $\bm{A}$ has the weighted rNSP and $\lambda$ satisfies \R{lambda-bound-rNSP-err}, we may use Lemma \ref{lemma-musuboptimal} to get
\eas{
\nm{\x-\x_0}_{2;\cV} & \leq  \dfrac{C_1'}{\sqrt{k}} \left( 2 {\sigma_{k}(\x)_{1,\w;\cV}} 
+\dfrac{\cG(\x_0,\bm{b}) - \cG(\x,\bm{b}) }{ \lambda} \right) + \left( \dfrac{C'_1}{\sqrt{k}\lambda} + C'_2 \right)\nm{\bm{A} \bm{x} - \bm{b}}_{2;\cV}
\\
& = \frac{C'_1}{\sqrt{k} \lambda} \cE(\x_0,\x,\bm{b}) + 2 C'_1 \frac{\sigma_{k}(\x)_{1,\w;\cV}}{\sqrt{k}} + \left( \dfrac{C'_1}{\sqrt{k}\lambda} + C'_2 \right)\nm{\bm{A} \bm{x} - \bm{b}}_{2;\cV}
\\
& \leq 2 \max \left \{ C'_1 / c , C'_2 \right \} \left (  \cE(\x_0,\x,\bm{b}) + \xi \right ).
}
Substituting this into the previous expression now gives the result.
}

This lemma gives the rationale behind the restarted scheme. It says the error in the objective function of the scaled output $a \cP(\x_0/a,\bm{b}/a,n)$ of the primal-dual iteration with initial value $\x_0$ can be bounded in terms of the error in the objective function at the initial value, plus terms depending on the scaling parameter $a$, the number of iterations $n$ and the compressed sensing error term $\xi$. By choosing these parameters suitably and iterating this procedure, we obtain the restarting scheme. We summarize this in the following theorem:

\thm{[Restarting scheme]
\label{thm:restart-scheme-bound}
Suppose that $\bm{A} \in \cB(\cV^N,\cV^m)$ has the weighted rNSP over $\cV$ of order $(k,\w)$ with constants  $0<\rho<1$ and $\gamma>0$. Consider the Hilbert-valued weighted SR-LASSO problem \R{SRLASSO-CS-sec} with parameter $\lambda = c / \sqrt{k}$, where $0 < c \leq \frac{(1+\rho)^2}{(3+\rho) \gamma}$. Let $\x \in \cV^N$, $\bm{b} \in \cV^m$, $\zeta' \geq \xi$, where $\xi$ is as in \R{xi-def}, $0 < r < 1$ and define the sequence
\bes{
\varepsilon_0 = \nm{\bm{b}}_{2;\cV },\qquad \varepsilon_{k+1} = r (\varepsilon_k + \zeta'),\ k = 0,1,2,\ldots .
}
Let $\cE$ and $\cP$ be as defined above, $\tau,\sigma$ satisfy $\nmu{\bm{A}}^2_{\cB(\cV^N,\cV^m)} \leq (\tau \sigma)^{-1}$ and set
\bes{
n = \left \lceil \frac{2 C}{r \sqrt{\sigma \tau} } \right \rceil,\qquad a_k = \frac12  \sigma \varepsilon_{k+1} n , \ k = 0,1,2,\ldots ,
}
where $C$ is as in \R{C-def-restart-scheme}. Then the iteration $\tilde{\bm{x}}^{(0)},\tilde{\bm{x}}^{(1)},\tilde{\bm{x}}^{(2)},\ldots$, defined by
\bes{
\tilde{\bm{x}}^{(0)} = \bm{0},\qquad \tilde{\bm{x}}^{(k+1)} = a_k \cP(\tilde{\bm{x}}^{(k)} / a_k , \bm{b} / a_k , n ),\ k = 0,1,2,\ldots ,
}
satisfies
\bes{
\cE(\bm{x}^{\star}_k , \bm{x} , \bm{b} ) \leq \varepsilon_k \leq r^k \nm{\bm{b}}_{2;\cV} + \frac{r}{1-r} \zeta',\quad k = 0,1,2,\ldots.
}
}

\prf{
We use induction on $k$. Suppose first that $k = 0$. Then, by definition,
\bes{
\cE(\tilde{\bm{x}}^{(k)} , \bm{x} , \bm{b} ) = \cE(\bm{0} , \bm{x} , \bm{b} ) \leq \cG(\bm{0},\bm{b}) = \nm{\bm{b}}_{2;\cV} = \varepsilon_0.
}
Now suppose that the result holds for $k$. The previous lemma gives
\eas{
\cE(\tilde{\bm{x}}^{(k+1)} , \bm{x} , \bm{b} ) & = \cE(a_k \cP(\tilde{\bm{x}}^{(k)} / a_k , \bm{b} / a_k , n) ,\x , \bm{b} ) 
\\
& \leq \frac{C^2}{a_k \tau n} \left ( \cE(\tilde{\bm{x}}^{(k)} , \bm{x} , \bm{b}) + \zeta \right )^2 + \frac{a_k}{\sigma n}
\\
& \leq \frac{C^2}{a_k \tau n} \left ( \varepsilon_k + \zeta \right )^2 + \frac{a_k}{\sigma n}.
}
We now substitute the values of $n$ and $a_k$ to obtain
\bes{
\cE(\tilde{\bm{x}}^{(k+1)} , \bm{x} , \bm{b} ) = \frac{2 C^2 (\varepsilon_k + \zeta)}{r \sigma \tau n^2} + \frac12 r (\varepsilon_k + \zeta) \leq \frac12 r(\varepsilon_k + \zeta ) + \frac12 r (\varepsilon_k + \zeta) = \varepsilon_{k+1}.
}
This completes the proof.
}

This theorem states that the restarted primal-dual iteration $\tilde{\bm{x}}^{(0)},\tilde{\bm{x}}^{(1)},\tilde{\bm{x}}^{(2)},\cdots$ yields an objective function error $\cE(\tilde{\bm{x}}^{(k)} , \bm{x} , \bm{b})$ that converges exponentially fast in the number of restarts $k$. Further, each (inner) primal-dual iteration involves a number of steps $n$ that depends on the parameters $C$, $r$, $\sigma$ and $\tau$. In other words, $n$ is a constant independent of $k$. Hence, the restarted scheme converges exponentially fast in the total number of primal-dual iterations as well. 

As discussed in \S \ref{ss:hyperparam-values}, it is typical to use this theorem to optimize the choice of $r$. Recall that this leads to the explicit choice $r = \E^{-1}$. We use this value in our algorithms -- see Table \ref{tab:the-effic-algorithms}.

\section{Final arguments}\label{s:final-args}

We are now ready to prove the main results, Theorems \ref{t:main-res-map-alg}--\ref{t:main-res-effic-algo-alg_exp}.  In several of these proofs, we require the following definition. Let $s \in \bbN$ and define
\begin{equation}\label{def_k(s)}
k(s) := \max \{ |S|_{\u} : S \subset \bbN_0^d, |S| \leq s, \, S  \, \mathrm{lower} \},
\end{equation}
where $\bm{u}$ are the intrinsic weights \R{weights_def} (recall the definition of a lower set from Definition \ref{def:lower-anchored-set}). It can be shown that
\be{
\label{k(s)-bound}
k(s) = s^2,\quad \mbox{(Legendre)},
\qquad
k(s) \leq \min \{ 2^d s , s^{\log(3)/\log(2)} \},\quad \mbox{(Chebyshev)}.
}
See, e.g., \cite[Eqn.\ (7.42) and Props.\ 5.13 \& 5.17]{adcock2021sparse}. We will use this property several times in what follows.

\subsection{Algebraic rates of convergence, finite dimensions}

\prf{
[Proof of Theorem \ref{t:main-res-map-alg}]

The mapping was described in Table \ref{tab:the-mappings}. As shown therein, we can write the corresponding approximation as $\hat{f} = \sum_{\bm{\nu} \in \Lambda} \hat{c}_{\bm{\nu}} \Psi_{\bm{\nu}}$, where $\hat{\bm{c}} = (\hat{c}_{\bm{\nu}})_{\bm{\nu} \in \Lambda}$ is a minimizer of \R{wsr-LASSO}. Next, due to the various assumptions made, we may apply Theorem \ref{t:main-res-map-alg_inex}. Setting $\tilde{f} = \hat{f}$ and $\tilde{\bm{c}} = \hat{\bm{c}}$, we deduce that 
\be{
\label{f-fhat-error-final-arg}
\nmu{f - \hat{f}}_{L^2_{\varrho}(\cU ; \cV)} \leq c_1 \cdot \xi,\quad \nmu{f - \hat{f}}_{L^{\infty}(\cU ; \cV)} \leq c_2 \cdot \sqrt{k} \cdot \xi,
}
where (after writing out the term $E_{h,\infty}(f)$ explicitly)
\be{
\label{xi-final-arg-exact-alg-fin}
\xi = \dfrac{\sigma_{k}(\bm{c}_{\Lambda})_{1,\u;\cV}}{\sqrt{k}}   +   \frac{E_{\Lambda,\infty}(f)}{\sqrt{k}}+ E_{\Lambda,2}(f) + \nmu{f - \cP_h(f)}_{L^{\infty}(\cU ; \cV)} + \cG(\hat{\bm{c}}) - \cG(\cP_h(\bm{c}_{\Lambda}))   + \frac{\nm{\bm{n}}_{2;\cV}}{\sqrt{m}} ,
}
and $k = m / (c_0 L)$ with $c_0 \geq 1$ a universal constant. We now bound each term separately.

\pbk
\textit{Step 1.\ The terms $\sigma_{k}(\bm{c}_{\Lambda})_{1,\u;\cV}/\sqrt{k}$, $E_{\Lambda,\infty}(f) / \sqrt{k}$ and $E_{\Lambda,2}(f)$.} 
The term $\sigma_{k}(\bm{c}_{\Lambda})_{1,\u;\cV}/\sqrt{k}$ is estimated via (ii) of Theorem \ref{thm:best_s-term_fin-dim} with $q = 1$. This gives
\be{
\label{xi-final-arg-sigma-term}
\frac{\sigma_{k}(\bm{c}_{\Lambda})_{1,\u;\cV}}{\sqrt{k}} \leq C(d,p,\bm{\rho}) \cdot k^{1/2-1/p} = C(d,p,\bm{\rho}) \cdot \left ( \frac{m}{c_0 L} \right )^{1/2-1/p}.
}
We estimate the term $E_{\Lambda,2}(f)$ by first recalling that $\Lambda = \Lambda^{\mathsf{HC}}_{n,d}$ is the union of all lower sets (see Definition \ref{def:lower-anchored-set}) of size at most $n = \lceil m / L \rceil $ (see \S \ref{ss:prob-stat}). Hence, using (i) of Theorem \ref{thm:best_s-term_fin-dim} with $s = n$ and $q = 2$, we get
\be{
\label{xi-final-arg-E2-term}
E_{\Lambda,2}(f) = \nmu{\bm{c} - \bm{c}_{\Lambda}}_{2;\cV} \leq  \nmu{\bm{c} - \bm{c}_{S}}_{2;\cV} \leq C(d,p,\bm{\rho}) \cdot n^{1/2-1/p} \leq C(d,p,\bm{\rho}) \cdot \left ( \frac{m}{c_0 L} \right )^{1/2-1/p}.
}
Here, in the last step we recall that $n = \lceil m / L \rceil $ and $c_0 \geq 1$. 

It remains to consider $E_{\Lambda,\infty}(f) / \sqrt{k}$.  Due to the choice of weights, we have $E_{\Lambda,\infty}(f) \leq \nmu{\bm{c} - \bm{c}_{\Lambda}}_{1,\bm{u};\cV}$.
We now apply (i) of Theorem \ref{thm:best_s-term_fin-dim} once more, with $s = n$ and $q = 1$, to get
\bes{
E_{\Lambda,\infty}(f) \leq \nmu{\bm{c} - \bm{c}_{S}}_{1,\bm{u};\cV} \leq C(d,p,\bm{\rho}) \cdot n^{1-1/p}.
}
Since $n = \lceil m/L \rceil \geq m/(c_0 L) = k$, we obtain
\be{
\label{xi-final-arg-Einf-term}
\frac{E_{\Lambda,\infty}(f)}{\sqrt{k}} \leq C(d,p,\bm{\rho}) \cdot \left ( \frac{m}{c_0 L} \right )^{1/2-1/p}.
}

\pbk
\textit{Step 2.\ The term $\cG(\hat{\bm{c}}) - \cG(\cP_h(\bm{c}_{\Lambda}))$.} Since $\hat{\bm{c}}$ is a minimizer of \R{wsr-LASSO} and $\cP_h(\bm{c}_{\Lambda}) \in \cV^N_h$ is feasible for \R{wsr-LASSO}, this term satisfies
\be{
\label{xi-final-arg-obj-term}
\cG(\hat{\bm{c}}) - \cG(\cP_h(\bm{c}_{\Lambda})) \leq 0.
}

\pbk
\textit{Step 3.\ Conclusion.} We now substitute the bounds \R{xi-final-arg-sigma-term}--\R{xi-final-arg-obj-term} into \R{xi-final-arg-exact-alg-fin}. Since $k \leq m/L$, we deduce that $\xi \leq \zeta$, where $\zeta$ is given by \R{zeta-alg-def}. This completes the proof.
}

\prf{
[Proof of Theorem \ref{t:main-res-algo-alg}]

The argument is similar to that of the previous theorem. Recall from \S \ref{ss:the-algorithms} that, in this case the approximation $\hat{f} = \sum_{\bm{\nu} \in \Lambda} \tilde{c}_{\bm{\nu}} \Psi_{\bm{\nu}}$, where $\hat{\bm{c}} = \bar{\bm{c}}^{(T)}$ is the ergodic sequence obtained after $T$ steps of the primal-dual iteration applied to \R{wsr-LASSO}. Hence, the only difference is the estimation of $\cG(\hat{\bm{c}}) - \cG(\cP_h(\bm{c}_{\Lambda}))$ in Step 2.

We now do this using Lemma \ref{lemma-subopti}. In order to apply this lemma we first need to estimate $\nmu{\bm{A}}_{\cB(\cV^N_h,\cV^m_h)}$. Let $\bm{x} = (x_{\bm{\nu}})_{\bm{\nu} \in \Lambda} \in \cV^N_h$ and define $p(\y) = \sum_{\bm{\nu} \in \Lambda} x_{\bm{\nu}} \Psi_{\bm{\nu}}$. Then
\bes{
\nmu{\bm{A} \bm{x}}_{2;\cV} = \sqrt{\frac1m \sum^{m}_{i=1} \nm{p(\y_i)}^2_{\cV}} \leq \sup_{\bm{\y} \in \cU} \nm{p(y)}_{\cV} \leq \sum_{\bm{\nu} \in \Lambda} \nm{x_{\bm{\nu}}}_{\cV} u_{\bm{\nu}} \leq \nm{\bm{x}}_{2;\cV} \sqrt{|\Lambda|_{\bm{u}}}.
}
Now the set $\Lambda$ is lower and of cardinality $|\Lambda| = \Theta(n,d)$. Hence, by \R{k(s)-bound} with $s = N$, we have $|\Lambda|_{\bm{u}} \leq (\Theta(n,d))^{2\alpha}$, where $\alpha$ is as in \R{main_alpha_def}. Since $\bm{x}$ was arbitrary, we get
\begin{equation}\label{eq:Bound_A}
\nm{\bm{A}}_{2;\cV} \leq (\Theta(n,d))^{\alpha}.
\end{equation}
Since the primal-dual iteration in \S \ref{ss:the-algorithms} is used with $\tau = \sigma = (\Theta(n,d))^{-\alpha}$, we have that $\nm{\bm{A}}^2_{2;\cV} \leq (\tau \sigma)^{-1}$. Hence we may apply Lemma \ref{lemma-subopti}. Since the iteration is also initialized with the zero vector and run for a total of $T = \lceil 2 (\Theta(n,d))^{\alpha} t \rceil$ iterations (see \S \ref{ss:the-algorithms} once more), this gives
\bes{
\cG(\hat{\bm{c}}) - \cG(\cP_h(\bm{c}_{\Lambda})) \leq (\Theta(n,d))^{\alpha} \frac{\nm{\cP_{h}(\bm{c}_{\Lambda} )}^2_{2;\cV} + 1}{T}.
}
Observe that
\bes{
\nmu{\cP_h(\bm{c}_{\Lambda})}_{2;\cV} \leq \nmu{\bm{c}_{\Lambda}}_{2;\cV} \leq \nmu{\bm{c}}_{c;\cV} = \nmu{f}_{L^2_{\varrho}(\cU ; \cV)} \leq 1.
}
Here, in the last step, we use the fact that $f \in \cB(\bm{\rho})$, and therefore $\nmu{f}_{L^2_{\varrho}(\cU ; \cV)} \leq \nmu{f}_{L^{\infty}(\cU;\cV)} \leq 1$. Using this and the value of $T$, we deduce that 
\bes{
\cG(\hat{\bm{c}}) - \cG(\cP_h(\bm{c}_{\Lambda})) \leq \frac{1}{t}.
}
Substituting this into \R{xi-final-arg-exact-alg-fin} and combining with the other estimates \R{xi-final-arg-sigma-term}--\R{xi-final-arg-Einf-term} derived in Step 2 of the proof of Theorem \ref{t:main-res-map-alg} now gives the desired error bound.

It remains to estimate the computational cost. We do this via Lemmas \ref{l:comp-cost-PDI} and \ref{l:comp-cost-A}. First observe that the value $k$ in Lemma \ref{l:comp-cost-A} is equal to $k = d$ in this case, since the index set $\Lambda = \Lambda^{\mathsf{HC}}_{n,d}$ is a $d$-dimensional hyperbolic cross index set. Similarly, the value $n$ in Lemma \ref{l:comp-cost-A} is bounded by the order $n$ of this hyperbolic cross. As $\Lambda$ is a lower set, we also have $n \leq N$. Hence, the computational cost for forming the matrix $\bm{A}$ is bounded by $c \cdot m \cdot N \cdot d$. We now use Lemma \ref{l:comp-cost-PDI} to bound the computational cost of the algorithm. Finally, we recall that $N = \Theta(n,d)$ and $T = \lceil 2 (\Theta(n,d))^{\alpha}t \rceil$ in this case. 
}

\begin{proof}
[Proof of Theorem \ref{t:main-res-effic-algo-alg}]

As in the previous proof, we only need to estimate the term $\cG(\hat{\bm{c}}) - \cG(\cP_h(\bm{c}_{\Lambda}))$. Recall from Table \ref{tab:the-effic-algorithms} that in this case $\hat{\bm{c}} = \tilde{\bm{c}}^{(R)}$ is the output of the restarted primal-dual iteration with $R$ restarts. Our goal is to use Theorem \ref{thm:restart-scheme-bound} applied to the problem \R{wsr-LASSO} with weights $\bm{w} = \bm{u}$ as in \R{weights_def}, $\lambda = (4 \sqrt{m/L})^{-1}$ and $\bm{x} = \cP_h(\bm{c}_{\Lambda})$.

We first show that the conditions of this theorem hold. Recall from Step 2 of the proof of Theorem \ref{t:main-res-map-alg_inex} that the matrix $\bm{A}$ has the weighted rNSP of order $(k,\bm{u})$ over $\cV_h$ with constants $\rho = 2 \sqrt{2} / 3$ and $\gamma = 2 \sqrt{5} / 3$. In particular,
\bes{
\frac{(1+\rho)^2}{(3 + \rho) \gamma} \geq 0.64.
}
We now use \R{lambda-bound} to see that
\bes{
\lambda = \frac{1}{4 \sqrt{c_0}} \frac{1}{\sqrt{k}} \leq \frac{(1+\rho)^2}{(3 + \rho) \gamma} \frac{1}{\sqrt{k}},
} 
for a sufficiently large choice of $c_0$.

Next, with this choice of $\bm{x}$, we see that
\bes{
\xi(\bm{x} , \bm{b}) = \frac{\sigma_k(\cP_h(\bm{c}_{\Lambda}))_{1,\bm{u};\cV}}{\sqrt{k}} + \nmu{\bm{A} \cP_h(\bm{c}_{\Lambda}) - \bm{b}}_{2;\cV}. 
}
Using \R{sigma-k-Ph} and \R{bound-needed-2}, we get
\bes{
\xi(\bm{x} , \bm{b}) \leq  \frac{\sigma_k(\bm{c}_{\Lambda})_{1,\bm{w};\cV} }{\sqrt{k}} + \sqrt{2} \left ( \frac{E_{\Lambda,\infty}(f)}{\sqrt{k}}  + E_{\Lambda,2}(f) \right )+ E_{h,\infty}(f) + \frac{\nm{\bm{n}}_{2;\cV}}{\sqrt{m}},
}
with probability at least $1-\epsilon$. Using \R{xi-final-arg-sigma-term}--\R{xi-final-arg-Einf-term}, we deduce that
\bes{
\xi(\bm{x} , \bm{b}) \leq \zeta,
}
with probability at least $1-\epsilon$, where $\zeta$ is as in \R{zeta-alg-def}. Hence, $\xi(\bm{x},\bm{b}) \leq \zeta'$.

Next, recall from Table \ref{tab:the-effic-algorithms} that $\tau = \sigma = (\Theta(n,d))^{-\alpha}$ in this case. Due to \R{eq:Bound_A}, we see that $\nm{\bm{A}}_{2;\cV} \leq (\tau \sigma)^{-1}$ as well.

Now consider the constant $C$ defined in \R{C-def-restart-scheme}. The values for $\rho$ and $\gamma$ give that $C'_1 \leq C'_2 \leq 103$. Since $\lambda = c / \sqrt{k}$ with $c = 1/(4 \sqrt{c_0})$, we see that
\be{
\label{cstar-def}
4C \leq 812 / c = 3296 \sqrt{c_0} : = c^{\star}.
}
Therefore, recalling that $r = 1/2$ and $\tau = \sigma = (\Theta(n,d))^{-\alpha}$, we see that
\bes{
\left \lceil \frac{2 C}{r \sqrt{\sigma \tau}} \right \rceil = \left \lceil (\Theta(n,d))^{\alpha} c^{\star} \right \rceil = T,
}
where $T$ is as specified in Table \ref{tab:the-effic-algorithms}, and
\bes{
\frac12 r \sigma (\varepsilon_k + \zeta') T = \frac{(\Theta(n,d))^{\alpha} T}{4} \varepsilon_{k+1} = s \varepsilon_{k+1} = a_k,
}
where $s$ and $a_k$ are as specified in Table \ref{tab:the-effic-algorithms} and Algorithm \ref{a:primal-dual-wSRLASSO-restart}, respectively.

With this in hand, we are now finally in a position to apply Theorem \ref{thm:restart-scheme-bound}. We deduce that
\bes{
\cG(\hat{\bm{c}}) - \cG(\cP_h(\bm{c}_{\Lambda})) = \cE(\tilde{\bm{c}}^{(R)},\cP_h(\bm{c}_{\Lambda}),\bm{b}) \leq \varepsilon_k = \E^{-R} \nm{\bm{b}}_{2;\cV} + \zeta'.
}
To complete the proof of the error bound \R{err-bound-effic-algo-alg_2}, we simply note that $\nm{\bm{b}}_{2;\cV} \leq \nm{f}_{L^{\infty}(\cU ; \cV)} \leq 1$, since $f \in \cB(\bm{\rho})$.

It remains to estimate the computational cost. As before, the computational cost for forming the matrix $\bm{A}$ is bounded by $c \cdot m \cdot N \cdot d$. Next, by construction, we observe that the algorithm consists of $R = t$ primal-dual iterations, each involving $T = \lceil (\Theta(n,d))^{\alpha} c^{\star} \rceil$ steps. Therefore, by Lemma \ref{l:comp-cost-PDI} the computational cost for the algorithm is
\bes{
c \cdot \left ( m \cdot N \cdot K + (m+N) \cdot (F(\bm{G})+K) \right ) \cdot \lceil (\Theta(n,d))^{\alpha} c^{\star} \rceil \cdot t.
}
Since $N = \Theta(n,d)$ and $c^{\star}$ is a universal constant, the result follows.
\end{proof}

\subsection{Algebraic rates of convergence, infinite dimensions}

\prf{[Proof of Theorem \ref{t:main-res-map-alg_infty}]

The proof is similar to that of Theorem \ref{t:main-res-map-alg},  except that it uses Theorem \ref{t:main-res-map-alg_inex_infty} in place of Theorem \ref{t:main-res-map-alg_inex}. In particular, we see that \R{f-fhat-error-final-arg} also holds in this case with $\xi$ as in \R{xi-final-arg-exact-alg-fin} and $k = m/(c_0 L)$.

Step 2 is identical. The only differences occur in Step 1. We now describe the changes needed in this case. First consider the term $\sigma_{k}(\bm{c}_{\Lambda})_{1,\u;\cV}/\sqrt{k}$. To bound this, we use (ii) of Theorem \ref{thm:best_s-term_inf-dim} with $q = 1 > p$. This gives
\bes{
\frac{\sigma_{k}(\bm{c}_{\Lambda})_{1,\u;\cV}}{\sqrt{k}} \leq C (\bm{b},\varepsilon,p)  \cdot k^{1/2-1/p} = C (\bm{b},\varepsilon,p)  \cdot  \left ( \frac{m}{c_0 L} \right )^{1/2-1/p}.
}
To estimate $E_{\Lambda,2}(f)$, recall that $\Lambda = \Lambda^{\mathsf{HCI}}_{n}$ contains all anchored sets (see Definition \ref{def:lower-anchored-set}) of size at most $n = \lceil m/L \rceil$ ((see \S \ref{ss:prob-stat}). Hence, using (iii) of Theorem \ref{thm:best_s-term_inf-dim} with $s = n$ and $q = 2 > p$, we get
\bes{
E_{\Lambda,2}(f) = \nmu{\bm{c} - \bm{c}_{\Lambda}}_{2;\cV} \leq  \nmu{\bm{c} - \bm{c}_{S}}_{2;\cV} \leq C (\bm{b},\varepsilon,p) \cdot n^{1/2-1/p} \leq C (\bm{b},\varepsilon,p) \cdot \left ( \frac{m}{c_0 L} \right )^{1/2-1/p}.
}
Finally, for $E_{\Lambda,\infty}(f)$, we use (iii) of Theorem \ref{thm:best_s-term_inf-dim} once more (with $q = 1 > p$) to get
\bes{
\frac{E_{\Lambda,\infty}(f)}{\sqrt{k}} \leq \frac{\nmu{\bm{c}-\bm{c}_S}_{1,\bm{u};\cV}}{\sqrt{k}} \leq C (\bm{b},\varepsilon,p)  \cdot k^{1/2-1/p} = C (\bm{b},\varepsilon,p)  \cdot  \left ( \frac{m}{c_0 L} \right )^{1/2-1/p}.
}
Having done this, we also observe that $\cG(\hat{\bm{c}}) - \cG(\cP_h(\bm{c}_{\Lambda})) \leq 0$ in this case, since $\hat{\bm{c}}$ is once more an exact minimizer. Using this and the previously-derived bounds, we conclude that $\xi \leq \zeta$, where $\zeta$ is as in \R{zeta-alg-inf-def}. This gives the result.
}

\prf{[Proof of Theorem \ref{t:main-res-algo-alg_infty}]

The argument is similar to that of Theorem \ref{t:main-res-algo-alg}. Here $\hat{\bm{c}} = \bar{\bm{c}}^{(T)}$ is the ergodic sequence obtained after $T$ steps of the primal-dual iteration applied to \R{wsr-LASSO} as well.  

We recall that the set $\Lambda$ is lower and of cardinality $|\Lambda| = \Theta(n,d)$ with $d=\infty$. Hence, by \R{k(s)-bound} with $s = N$, we have $|\Lambda|_{\bm{u}} \leq (\Theta(n,d))^{2\alpha}$, where $\alpha$ is as in \R{main_alpha_def}. Using this, we get
\bes{
\nm{\bm{A}}_{2;\cV} \leq (\Theta(n,d))^{\alpha},
}
as before.
Since the primal-dual iteration in Table \ref{tab:the-effic-algorithms} is used with $\tau = \sigma = (\Theta(n,d))^{-\alpha}$, we have that $\nm{\bm{A}}^2_{2;\cV} \leq (\tau \sigma)^{-1}$. Hence, following the same steps we deduce that 
\bes{
\cG(\hat{\bm{c}}) - \cG(\cP_h(\bm{c}_{\Lambda})) \leq \frac{1}{t}.
}
Substituting this into \R{xi-final-arg-exact-alg-fin} and combining with the other estimates \R{xi-final-arg-sigma-term}--\R{xi-final-arg-Einf-term} derived in Step 2 of the proof of Theorem \ref{t:main-res-map-alg} now gives the desired error bound.

The computational cost estimate is similar to the that in the proof of Theorem \ref{t:main-res-algo-alg}. In this case, observe that the value $k$ in Lemma \ref{l:comp-cost-A} is equal to $n$. Hence the computational cost of forming $\bm{A}$ is bounded by $c \cdot m \cdot N \cdot n$ in this case. The computational cost for the algorithm is given by Lemma \ref{l:comp-cost-PDI}. To complete the estimate, we substitute the values $N = \Theta(n,d)$ and $T = \lceil 2 (\Theta(n,d))^{\alpha}t \rceil$, as before.
}

\prf{[Proof of Theorem \ref{t:main-res-effic-algo-alg_infty}]

The proof is similar to that of Theorem \ref{t:main-res-effic-algo-alg} and involves estimating the term $\cG(\hat{\bm{c}}) - \cG(\cP_h(\bm{c}_{\Lambda}))$. Using the same steps, we deduce that
\bes{
\xi(\bm{x} , \bm{b}) \leq \zeta,
}
with probability at least $1-\epsilon/2$, where $\zeta$ is as in \R{zeta-alg-inf-def}. Hence, $\xi(\bm{x},\bm{b}) \leq \zeta'$.

Next, recall from Table \ref{tab:the-effic-algorithms} that $\tau = \sigma = (\Theta(n,d))^{-\alpha}$ with $d= \infty$ in this case. Due to \R{eq:Bound_A}, we see that $\nm{\bm{A}}_{2;\cV} \leq (\tau \sigma)^{-1}$ holds. We now apply  Theorem \ref{thm:restart-scheme-bound} to obtain
\bes{
\cG(\hat{\bm{c}}) - \cG(\cP_h(\bm{c}_{\Lambda})) = \cE(\tilde{\bm{c}}^{(R)},\cP_h(\bm{c}_{\Lambda}),\bm{b}) \leq \varepsilon_R = \E^{-R} \nm{\bm{b}}_{2;\cV} + \zeta'.
}
To complete the proof of the error bound \R{err-bound-effic-algo-alg_2}, we simply note that $\nm{\bm{b}}_{2;\cV} \leq \nm{f}_{L^{\infty}(\cU ; \cV)} \leq 1$, since $f \in \cB(\bm{b},\varepsilon)$.

The computational cost estimate is as in the previous proof.
}

\subsection{Exponential rates of convergence, finite dimensions}

\begin{proof}
[Proof of Theorem \ref{t:main-res-map-alg_exp}]

The proof has the same structure to that of Theorem \ref{t:main-res-map-alg}, the only differences being the use of Theorem \ref{t:main-res-map-alg_inex_exponential} instead of Theorem \ref{t:main-res-map-alg_inex} and the estimation of the various terms in Step 1. Suppose first that $m \geq c_0 2^{d+2} L$ and define the following:
\bes{
s = \begin{cases} \lceil \sqrt{m/(4 c_0 L)} \rceil & \mbox{Legendre,} \\ \lceil m / (4 c_0 2^d L) \rceil & \mbox{Chebyshev.} \end{cases}
}
Observe that
\bes{
s \leq  \begin{cases} \sqrt{m/(c_0 L)}& \mbox{Legendre,} \\ m / (c_0 2^d L) & \mbox{Chebyshev,} \end{cases}
}
and therefore the quantity $k(s)$ defined in \R{def_k(s)} satisfies
\bes{
k(s) \leq \frac{m}{c_0 L} = k.
}
Now consider the term $\sigma_{k}(\bm{c}_{\Lambda})_{1,\u;\cV}/\sqrt{k}$. Using this and (iii) of Theorem \ref{thm:best_s-term_fin-dim} with $p = 1$ we have
\bes{
\frac{\sigma_{k}(\bm{c}_{\Lambda})_{1,\u;\cV}}{\sqrt{k}} \leq \frac{\sigma_{k(s)}(\bm{c})_{1,\u;\cV}}{\sqrt{k}} \leq \frac{C(d,\gamma,\bm{\rho}) \cdot \exp(-\gamma s^{1/d}) }{\sqrt{k}}.
}
Note that this is possible since any lower set $S$ of size at most $s$ satisfies $|S|_{\bm{u}} \leq k(s)$ by definition.

Now consider $E_{\Lambda,\infty}(f)$. Recall that $\Lambda = \Lambda^{\mathsf{HC}}_{n,d}$, where $n$ is as in \R{n-exp-case}. Clearly $n \geq s$, since $c_0 \geq 1$. Hence $\Lambda$ contains all lower sets of size at most $s$. We deduce that
\bes{
E_{\Lambda,\infty}(f) \leq \nmu{\bm{c} - \bm{c}_S}_{1,\bm{u};\cV},
}
for any lower set of size $s$. We now use (iii) of Theorem \ref{thm:best_s-term_fin-dim} with $p = 1$ once more, to get
\bes{
E_{\Lambda,\infty}(f) \leq C(d,\gamma,\bm{\rho}) \cdot \exp(-\gamma s^{1/d}) .
}
We now combine this with the previous bound to deduce that the quantity $\xi$ in Theorem \ref{t:main-res-map-alg_inex_exponential} satisfies
\bes{
\xi \leq C(d,\gamma,\bm{\rho}) \cdot \exp(-\gamma s^{1/d}) + E_{h,\infty}(f) + \frac{\nm{\bm{n}}_{2;\cV}}{\sqrt{m}},
}
(here, we also recall that the term $\cG(\hat{\bm{c}}) - \cG(\cP_h(\bm{c}_{\Lambda})) \leq 0$, as in the proof of Theorem \ref{t:main-res-map-alg}). Using the value of $s$ and recalling that $m \geq c_0 2^{d+2} L$, we deduce that
\bes{
\xi \leq C(d,\gamma,\bm{\rho}) \cdot \begin{cases}  \exp \left ( - \frac{\gamma}{2} \left ( \frac{m}{4 c_0 L} \right )^{\frac{1}{d}} \right ) & Chebyshev \\ \exp \left ( - \gamma \left ( \frac{m}{4 c_0 L} \right )^{\frac{1}{2d}} \right ) & Legendre \end{cases} + \frac{\nm{\bm{n}}_{2;\cV}}{\sqrt{m}} + \nmu{f - \cP_h(f)}_{L^{\infty}(\cU ; \cV)},\quad m \geq c_0 2^{d+2} L.
}
However, this bound also clearly holds for all $m \geq 1$, up to a change in the constant $C(d,\gamma,\bm{\rho})$. After relabelling the universal constant $4 c_0$ as $c_0$, we deduce that $\xi \leq \zeta$, where $\zeta$ is as in \R{zeta-alg-def_exp}. This concludes the proof.
\end{proof}

\prf{[Proof of Theorem \ref{t:main-res-algo-alg_exp}]
The argument is  the same as the proof of Theorem \ref{t:main-res-algo-alg}. The difference relies on the fact that now $\zeta$  has the following bound
\bes{
\xi \leq C(d,\gamma,\bm{\rho}) \cdot \begin{cases}  \exp \left ( - \frac{\gamma}{2} \left ( \frac{m}{4 c_0 L} \right )^{\frac{1}{d}} \right ) & Chebyshev \\ \exp \left ( - \gamma \left ( \frac{m}{4 c_0 L} \right )^{\frac{1}{2d}} \right ) & Legendre \end{cases} + \frac{\nm{\bm{n}}_{2;\cV}}{\sqrt{m}} + \nmu{f - \cP_h(f)}_{L^{\infty}(\cU ; \cV)} + \cG(\hat{\bm{c}}) - \cG(\cP_h(\bm{c}_{\Lambda})).
}
To estimate the final term, we argue exactly as in the proof of Theorem \ref{t:main-res-algo-alg}. The computational cost estimate is likewise identical.
}

\prf{[Proof of Theorem \ref{t:main-res-effic-algo-alg_exp}]

The proof is similar to that of Theorem \ref{t:main-res-effic-algo-alg}, except we use Theorem \ref{t:main-res-map-alg_inex_exponential} instead. Recall from Step 2 of the proof of Theorem \ref{t:main-res-map-alg_inex_exponential} that the matrix $\bm{A}$ has the weighted rNSP of order $(k,\bm{u})$ over $\cV_h$ with constants $\rho = 2 \sqrt{2} / 3$ and $\gamma = 2 \sqrt{5} / 3$ with probability $1-\epsilon$. In particular,
\bes{
\frac{(1+\rho)^2}{(3 + \rho) \gamma} \geq 0.64.
}
We now use \R{lambda-bound} to see that
\bes{
\lambda = \frac{1}{4 \sqrt{c_0}} \frac{1}{\sqrt{k}} \leq \frac{(1+\rho)^2}{(3 + \rho) \gamma} \frac{1}{\sqrt{k}},
} 
for a sufficiently large choice of $c_0$, as before.

Next, with the choice $\bm{x} = \cP_{h}(\bm{c}_{\Lambda})$ as before, we see that
\bes{
\xi(\bm{x} , \bm{b}) = \frac{\sigma_k(\cP_h(\bm{c}_{\Lambda}))_{1,\bm{u};\cV}}{\sqrt{k}} + \nmu{\bm{A} \cP_h(\bm{c}_{\Lambda}) - \bm{b}}_{2;\cV}. 
}
Using  \eqref{bound-needed-2-exp}, we get
\bes{
\xi(\bm{x} , \bm{b}) \leq  \frac{\sigma_k(\bm{c}_{\Lambda})_{1,\bm{w};\cV} }{\sqrt{k}} +E_{\Lambda,\infty}(f) + E_{h,\infty}(f) + \frac{\nm{\bm{n}}_{2;\cV}}{\sqrt{m}},
}
with probability $1- \epsilon$. It now follows from the proof of Theorem \ref{t:main-res-map-alg_exp} that
\bes{
\xi(\bm{x} , \bm{b}) \leq \zeta,
}
with probability at least $1-\epsilon$, where $\zeta$ is as in \R{zeta-alg-def_exp}. Hence, $\xi(\bm{x},\bm{b}) \leq \zeta'$.

The rest of the proof follows the same steps as the proof of Theorem \ref{t:main-res-effic-algo-alg}.
}

\section{Conclusions}\label{s:conclusions}

Sparse polynomial approximation is a useful tool in parametric model problems, including surrogate model construction in UQ. The theory of best $s$-term approximation supports the use of polynomial-based methods, and techniques such as least squares and compressed sensing are known to have desirable sample complexity bounds for obtaining polynomial approximations. In this work, we have closed a key gap between these two areas of research, by showing the existence of algorithms that achieve the algebraic and exponential rates of the best $s$-term approximation with respect to the number of samples $m$. Thus, sparse polynomial approximation can be practically realized in a provably sample-efficient manner. As our numerical experiments confirm, our algorithms are practical, and actually perform better than the theory suggests.

There are a number of avenues for further research. First, this work has focused on Chebyshev and Legendre polynomials on the hypercube $[-1,1]^d$. It is plausible that it can be extended to general ultraspherical or Jacobi polynomials. A more significant challenge involves Hermite or Laguerre polynomials on $\bbR^d$ or $[0,\infty)^d$. This is an interesting problem for future research.

It is notable that the algorithms developed in this paper do not generally compute $m$-term polynomial approximations. Indeed, (inexact) minimizers of the SR-LASSO problem will generally be nonsparse vectors of length $N = \Theta(n,d)$. It is interesting to investigate whether one can develop algorithms that achieve the same error bounds while computing $m$-term polynomial approximations. In classical compressed sensing, one can typically computes sparse solutions by using a greedy or iterative procedure (see, e.g., \cite{foucart2013mathematical}). Unfortunately, it is not clear how to extend these procedures to the weighted case with theoretical guarantees. Nonetheless, certain weighted greedy methods appear to work well in practice for sparse polynomial approximation \cite{adcock2020sparse}.

Another motivation for considering different algorithms is to see if the computational cost estimates can be reduced. While this is often not the main computational bottleneck in parametric model problems (generally, computing the samples is the most computationally-intensive step), it is still an important issue.
We have shown that the computational cost is at worst subexponential in $m$ in infinite dimensions, and algebraic in $m$ (for fixed $d$) in finite dimensions. Whether these are optimal is an interesting open problem. Here, ideas from sublinear-time algorithms \cite{choi2021sparse,choi2021sparse2} may be particularly useful.

In the case of the exponential rates, it is notable that the best $s$-term approximation error is exponentially small in $\gamma \cdot s^{1/d}$ (see Theorem \ref{t:best_s_term_exp_1}), whereas the exponents in \S \ref{ss:main-res-exp} are $\gamma / 2 (m / (c_0 L))^{1/d}$ (Chebyshev) and $\gamma (m/(c_0 L))^{1/(2d)}$ (Legendre case). The reason for this can be traced to the sample complexity estimate for computing a sparse (and lower) polynomial approximation via compressed sensing with Monte Carlo sampling, i.e., $m \approx c_0 \cdot 2^d \cdot s \cdot L$ (Chebyshev) or $m \approx c_0 \cdot s^2 \cdot L$ (Legendre). To see why this is the case, combine Lemma \ref{l:LegMat_RIP} with \R{k(s)-bound}. In the setting of least squares, in which the desired polynomial subspace is known, it is possible to change the sampling measure to obtain sample complexity bounds that are log-linear in $s$ and therefore near optimal. See, e.g., \cite{adcock2020nearoptimal,cohen2017optimal,hampton2015coherence}. More recently, several works \cite{cohen2021optimal,kammerer2019worst,limonova2021sampling,temlyakov2021optimal,bartel2022constructive,dolbeault2023sharp} have also introduced sampling schemes that achieve linear sample complexity in $s$ -- i.e., optimal up to a constant. Unfortunately, it is unknown whether linear or log-linear sample complexity possible in the compressed sensing setting, where the target subspace is unknown. See \cite{adcock2022towards} for further discussion on this issue.

Finally, as previously noted in \S \ref{ss:discussion}, this work focuses on polynomial approximation, and not on fundamental issues pertaining to tractability and the information complexity of the classes of multivariate holomorphic functions considered. For some related work in this direction, see \cite{xu2015weak,huang2007approximation,novak2009approximation} and references therein. A question of particular interest is whether pointwise samples (i.e., \textit{standard information}), and more specifically, i.i.d.\ pointwise samples (i.e., \textit{random information}) constitutes optimal or near-optimal information for these classes of functions. These questions have recently been considered in a broader context in \cite{krieg2023exponential,hinrichs2019power}. See also \cite{krieg2021recovery} for the case of functions in Sobolev spaces. As we observed in \S \ref{ss:discussion}, in a recent work \cite{adcock2023optimala} we derived lower bounds for the (adaptive) $m$-widths for classes of $(\bm{b},\varepsilon)$-holomorphic functions in infinite dimensions. Showing that the algorithms (or small modifications thereof) developed in this work also attain (nearly) matching upper bounds -- and, consequently, that i.i.d.\ pointwise samples constitute (near) optimal information -- is an interesting problem for future work.

\section*{Acknowledgements}

BA acknowledges the support of NSERC through grant RGPIN-2021-611675. SB acknowledges the support of NSERC through grant RGPIN-2020-06766, the Faculty of Arts and Science of Concordia University and the CRM Applied Math Lab. ND acknowledges support from a PIMS postdoctoral fellowship.

\appendix

\section{Best polynomial approximation rates for holomorphic functions}\label{s:best-poly-rates}

In this appendix, we recap a series of standard best approximation error bounds for polynomial approximation of holomorphic functions. These are used in \S \ref{s:final-args} to estimate the various error terms appearing in Theorems \ref{t:main-res-map-alg_inex}--\ref{t:main-res-map-alg_inex_exponential}. 

\subsection{The finite-dimensional case}\label{app:fin-dim}

We first consider the finite-dimensional case, where $\cU = [-1,1]^d$ for $d < \infty$ and $f : \cU \rightarrow \cV$ is a Hilbert-valued function (in fact, the following results also apply in the more general setting of Banach-valued functions; however, we shall not consider this explicitly). We now summarize the various approximation error bounds in the following theorem. This result combines various well-known results in the literature. It is essentially the same as \cite[Thm.\ 3.25]{adcock2021sparse}. However, we have made a number of minor edits to fit the notation and setup of this paper (see Remark \ref{rem:differences-finite} below).

\thm{
[Best $s$-term decay rates; finite dimensions]
\label{thm:best_s-term_fin-dim}
Let $d \in \bbN$, $f \in \cB(\bm{\rho})$ for some $\bm{\rho} > \bm{1}$, where $\cB(\bm{\rho})$ is as in \R{B-def}, and $\bm{c} = (c_{\bm{\nu}})_{\bm{\nu} \in \bbN^d_0}$ be its Chebyshev or Legendre coefficients. Then the following best $s$-term decay rates hold:
\begin{itemize}
\item[(i)] for any $0 < p \leq q \leq 2$ and $s \in \mathbb{N}$, there exists a lower set $S \subset \bbN^d_0$ of size $|S| \leq s$ such that 
$$
\sigma_s(\bm{c})_{q;\mathcal{V}} 
\leq \nmu{\bm{c} - \bm{c}_S}_{q;\cV}
\leq \nmu{\bm{c} - \bm{c}_S}_{q,\bm{u};\cV}
\leq C \cdot s^{1/q-1/p},
$$
where $\sigma_{s}(\bm{c})_{q;\cV}$ is as in Definition \ref{def:best_s_term} (with $\Lambda = \bbN^d_0$), $\bm{u}$ is as in \R{weights_def} and $C = C(d,p,\bm{\rho}) > 0$ depends on $d$, $p$ and $\bm{\rho}$ only;

\item[(ii)] for any $0< p \leq q \leq 2$ and $k >0$, 
$$
\sigma_k(\bm{c})_{q,\bm{u};\mathcal{V}} 
\leq C \cdot k^{1/q-1/p}, 
$$
where $\sigma_k(\bm{c})_{q,\bm{u};\mathcal{V}}$ is as in Definition \ref{def:best_s_term_weighted}, $\bm{u}$ is as in \R{weights_def} and $C = C(d,p,\bm{\rho}) > 0$ depends on $d$, $p$ and $\bm{\rho}$ only;

\item[(iii)] for any $0 < p \leq 2$,
\bes{
0 < \gamma < (d+1)^{-1} \left ( d! \prod^{d}_{j=1} \log(\rho_j) \right )^{1/d},
}
and $s \in \bbN$, there exists a lower set $S \subset \bbN^d_0$ of size $|S| \leq s$ such that 
\bes{
\sigma_{s}(\bm{c})_{p;\mathcal{V}}  \leq \nmu{\bm{c} - \bm{c}_{S}}_{p;\mathcal{V}} \leq \nmu{\bm{c} - \bm{c}_{S}}_{p,\bm{u};\mathcal{V}} \leq C \cdot \exp(-\gamma s^{1/d}),
}
where $\sigma_{s}(\bm{c})_{p;\cV}$ is as in Definition \ref{def:best_s_term} (with $\Lambda = \bbN^d_0$), $\bm{u}$ is as in \R{weights_def} and $C = C(d,\gamma,p,\bm{\rho}) > 0$ depends on $d$, $\gamma$, $p$ and $\bm{\rho}$ only.
\end{itemize}
}

\rem{
\label{rem:differences-finite}
There are several differences between Theorem \ref{thm:best_s-term_fin-dim} and \cite[Thm.\ 3.25]{adcock2021sparse}. A minor difference is that we do not specify the various constants $C$ appearing in the result. Another difference is in the presentation of (iii). Here we allow arbitrary $s \geq 1$ (instead of $s \geq \bar{s}$) at the expense of a larger (and unspecified) constant $C$. The main difference, however, is the additional term $\nmu{\bm{c} - \bm{c}_S}_{q,\bm{u};\cV}$ appearing in (i). This can be shown as follows. First, one defines the sequence $\bar{\bm{c}} = (u^{2/q-1}_{\bm{\nu}} c_{\bm{\nu}} )_{\bm{\nu} \in \bbN^d_0}$ so that $\nmu{\bm{c} - \bm{c}_S}_{q,\bm{u};\cV} = \nm{\bar{\bm{c}} - \bar{\bm{c}}_S }_{q;\cV}$ and then uses Stechkin's inequality in lower sets (see, e.g., \cite[Lem.\ 3.9]{adcock2021sparse}) to show that $\nm{\bar{\bm{c}} - \bar{\bm{c}}_S }_{q;\cV} \leq s^{1/q-1/p} \nm{\bar{\bm{c}}}_{p,M;\cV}$, where $\nm{\cdot}_{p,M;\cV}$ is the norm on the \textit{majorant $\ell^p$ space} $\ell^p_{M}(\bbN^d_0;\cV)$ (see, e.g., \cite[Defn.\ 3.8]{adcock2021sparse}). Finally, it can be shown that $\nm{\bar{\bm{c}}}_{p,M;\cV} \leq C(d,p,\bm{\rho})$ using standard arguments. See, e.g., \cite[Lem.\ 7.19]{adcock2021sparse} (this lemma only considers the scalar-valued case; however the extension to the Hilbert-valued case is straightforward).
}

Note that Theorem \ref{thm:best_s-term_fin-dim} immediately implies Theorems \ref{t:best_s_term_alg} and \ref{t:best_s_term_exp_1}. For the former, we note that $\nmu{f - f_{S_1}}_{L^2_{\varrho}(\cU ; \cV)} = \nm{\bm{c} - \bm{c}_{S_1}}_{2;\cV}$ and $\nmu{f - f_{S_2}}_{L^{\infty}(\cU ; \cV)} \leq \nmu{\bm{c} - \bm{c}_{S_2}}_{1,\bm{u} ; \cV}$. We then apply (i) with $q = 2$ or $q = 1$. For the latter, we use (iii) with $p = 1$.

\subsection{The infinite-dimensional case}\label{app:inf-dim}

We now consider the infinite-dimensional case, where $d = \infty$ and $\cU = [-1,1]^{\bbN}$.

\thm{
[Best $s$-term decay rates; infinite-dimensional case]
\label{thm:best_s-term_inf-dim}

Let $d = \infty$, $0 < p <1$, $\varepsilon >0$, $\bm{b} \in \ell^p(\bbN)$ with $\bm{b} > \bm{0}$ and $f \in \cB(\bm{b},\varepsilon)$, where $\cB(\bm{b},\varepsilon)$ is as in \R{B-b-eps-def}. Let $\bm{c} = (c_{\bm{\nu}})_{\bm{\nu} \in \cF}$ be the Chebyshev or Legendre coefficients of $f$. Then the following best $s$-term decay rates hold:

\begin{itemize}
\item[(i)] For any $p \leq q < \infty$ and $s \in \bbN$, there exists a lower set $S \subset \cF$ of size $|S| \leq s$ such that 
\bes{
\sigma_s(\bm{c})_{q;\mathcal{V}} \leq \nmu{\bm{c} - \bm{c}_S}_{q;\cV} \leq \nmu{\bm{c} - \bm{c}_S}_{q,\bm{u};\cV} \leq   C\cdot s^{1/q-1/p},
}
where $\sigma_{s}(\bm{c})_{q;\cV}$ is as in Definition \ref{def:best_s_term} (with $\Lambda = \cF$), $\bm{u}$ is as in \R{weights_def} and $C = C (\bm{b},\varepsilon,p) >0$ depends on $\bm{b}$, $\varepsilon$ and $p$ only.

\item[(ii)] For any $p \leq q \leq 2$ and $k >0$, 
$$
\sigma_k(\bm{c})_{q,\bm{u};\mathcal{V}} 
\leq C \cdot k^{1/q-1/p}, 
$$
where $\sigma_k(\bm{c})_{q,\bm{u};\mathcal{V}}$ is as in Definition \ref{def:best_s_term_weighted}, $\bm{u}$ is as in \R{weights_def} and $C = (\bm{b},\varepsilon,p) >0$ depends on $\bm{b}$, $\varepsilon$ and $p$ only.

\item[(iii)] Suppose that $\bm{b}$ is monotonically nonincreasing. Then, for any $p \leq q < \infty$ and $s \in \bbN$, there exists an anchored set $S \subset \cF$ of size $|S| \leq s$ such that 
\bes{
\sigma_s(\bm{c})_{q;\mathcal{V}} \leq \nmu{\bm{c} - \bm{c}_S}_{q;\cV} \leq \nmu{\bm{c} - \bm{c}_S}_{q,\bm{u};\cV} \leq   C\cdot s^{1/q-1/p},
}
where $\sigma_{s}(\bm{c})_{q;\cV}$ is as in Definition \ref{def:best_s_term} (with $\Lambda = \cF$), $\bm{u}$ is as in \R{weights_def} and $C = (\bm{b},\varepsilon,p) >0$ depends on $\bm{b}$, $\varepsilon$ and $p$ only.

\end{itemize}

}

\rem{
This theorem is based on \cite[Thms.\ 3.29 and 3.33]{adcock2021sparse}. Besides the term $\nmu{\bm{c} - \bm{c}_S}_{q,\bm{u};\cV}$, parts (i) and (iii) can be found in \cite[Thm.\ 3.29]{adcock2021sparse} and \cite[Thm.\ 3.33]{adcock2021sparse}, respectively. As in the finite-dimensional case (see Remark \ref{rem:differences-finite}), the main difference is the assertion of the bound on $\nmu{\bm{c} - \bm{c}_S}_{q,\bm{u};\cV}$. This can be established through similar arguments, using either the majorant $\ell^p$ space $\ell^p_M(\cF;\cV)$ or the anchored $\ell^p$ space $\ell^p_A(\cF;\cV)$ (see, e.g., \cite[Defn.\ 3.31]{adcock2021sparse}) and then Stechkin's inequality in lower or anchored sets (see, e.g., \cite[Lem.\ 3.32]{adcock2021sparse}). See also \cite[Lem.\ 7.23]{adcock2021sparse} (this lemma only considers the scalar-valued case; however the extension to the Hilbert-valued case is straightforward).

Note that neither \cite[Thm.\ 3.29]{adcock2021sparse} nor \cite[Thm.\ 3.33]{adcock2021sparse} asserts part (ii) of Theorem \ref{thm:best_s-term_inf-dim}. This can be shown via the weighted Stechkin's inequality (see, e.g., \cite[Lem.\ 3.12]{adcock2021sparse}), which gives the bound $\sigma_{k}(\bm{c})_{q,\u;\cV} \leq \nmu{\bm{c}}_{p,\u;\cV} \cdot k^{1/q-1/p}$, and then by showing that $\nmu{\bm{c}}_{p,\u;\cV} \leq C(\bm{b},\varepsilon,p)$. This latter fact can be obtained by the straightforward extension of \cite[Lem.\ 7.23]{adcock2021sparse} to the Hilbert-valued setting.
}

Note that Theorem \ref{thm:best_s-term_inf-dim} implies Theorem \ref{t:best_s_term_alg_inf}. This follows from (i) with $q = 2$ or $q = 1$.

\bibliographystyle{plain}
\bibliography{efficientpolyalgsrefs}

\end{document}